\newcommand{\lebn}
\theoremstyle{plain}
\newtheorem{prop}[equation]{Proposition}
\newtheorem{thm}[equation]{Theorem}
\newtheorem{cor}[equation]{Corollary}
\newtheorem{lem}[equation]{Lemma}
\newtheorem{rem}[equation]{Remark}
\theoremstyle{definition}
\newtheorem{nthm}{Theorem}
\newtheorem{nprop}[nthm]{Proposition}
\newtheorem{nlem}[nthm]{Lemma}
\newtheorem{claim}{Claim}
\newenvironment{cproof}[1]{\par\indent{\textit{Proof of the Claim.}}\space#1}{\hfill $\diamondsuit$}
\numberwithin{equation}{section}
\tikzset{nodc/.style={circle,draw=blue!50,fill=pink!80,inner sep=4.2pt}}
\tikzset{noddee/.style={circle,draw=black,fill=black,inner sep=1.6pt}}
\tikzset{nodel/.style={circle,draw=black,inner sep=2.2pt}}
\tikzset{nodinvisible/.style={circle,draw=white,inner sep=2pt}}
\tikzset{nodpale/.style={circle,draw=gray,fill=gray,inner sep=1.6pt}}
\tikzset{nodde/.style={circle,draw=blue!50,fill=pink!80,inner sep=4.2pt}}
\tikzset{noddee/.style={circle,draw=black,fill=black,inner sep=2pt}}
\tikzset{noddee1/.style={circle,draw=black,fill=black,inner sep=1.5pt}}
\tikzset{noddee2/.style={circle,draw=black,fill=black,inner sep=1pt}}
\tikzset{nod1/.style={circle,draw=black,fill=black,inner sep=1pt}}
\tikzset{nod2/.style={circle,draw=black,fill=blue!25!black,inner sep=1.6pt}}
\tikzset{nod3/.style={circle,draw=black,fill=black,inner sep=3pt}}
\tikzset{noddiam/.style={diamond,draw=black,inner sep=2pt}}
\tikzset{nodw/.style={circle,draw=black,inner sep=2pt}}
\newcommand{\N}{\mathbb{N}}
\newcommand{\sm}{\setminus}
\newcommand{\F}{\mathcal{F}}
\DeclareMathOperator*{\EFC}{EFC}
\DeclareMathOperator*{\ESE}{ESE}
\DeclareMathOperator*{\ECE}{ECE}
\DeclareMathOperator*{\VCE}{VCE}
 \def\@textbottom{\vskip \z@ \@plus 10pt}
 \let\@texttop\relax
\begin{document}
%%%%%%%%%%%%%%%%%%%%%%%%%%%%%%%%%%%%%%%%%%%%%%%%%%%%%%%%%%%%%%%%%%%%%%%%%%

\title{Critical Equimatchable Graphs}

\author{Zakir Deniz}\thanks{Department of Mathematics, Duzce University, Duzce, Turkey, zakirdeniz@duzce.edu.tr}
\author{T{\i}naz Ekim}\thanks{Department of Industrial Engineering, Bo\u{g}azi\c{c}i University, Istanbul, Turkey, tinaz.ekim@boun.edu.tr }

\keywords{Maximal Matching, equimatchable graphs, edge-critical, vertex-critical.}

\date{\today}

\thanks{The support of TUBITAK (grant no:121F018) is greatly acknowledged.}

\begin{abstract}
A graph $G$ is \emph{equimatchable} if every maximal matching of $G$ has the same cardinality.
In this paper, we investigate equimatchable graphs such that the removal of any edge harms the equimatchability, called \emph{edge-critical equimatchable} graphs ($\ECE$-graphs).  We show that apart from two simple cases, namely bipartite $\ECE$-graphs and even cliques, all $\ECE$-graphs are 2-connected factor-critical. Accordingly, we give a characterization of factor-critical $\ECE$-graphs with connectivity 2. Our result provides a partial answer to an open question posed by Levit and Mandrescu \cite{LM2016} on the characterization of well-covered graphs with no shedding vertex. We also introduce equimatchable graphs such that the removal of any vertex harms the equimatchability, called \emph{vertex-critical equimatchable} graphs ($\VCE$-graphs). To conclude, we enlighten the relationship between various subclasses of equimatchable graphs (including $\ECE$-graphs and $\VCE$-graphs) and discuss the properties of factor-critical $\ECE$-graphs with connectivity at least 3.
\end{abstract}
\maketitle

%%%%%%%%%%%%%%%%%%%%%%%%%%%%%%%%%%%%%%%%%%%%%%%%%%%%%%%%%%%%%%%%%%%%%%%%%%%%%%%%%%%%%%%

\section{Introduction}\label{sec:intro}

Matching theory is one of the fundamental fields that encompasses both practical and theoretical challenges \cite{LP}.  Given a graph $G$, a \emph{matching} is a set of edges of $G$ having pairwise no common endvertices. It is well-known that given a graph, a matching of maximum size can be efficiently computed whereas finding an inclusion-wise maximal matching of minimum cardinality is an NP-complete problem even in several restricted cases \cite{yannakakis}. 

A graph $G$ is called \emph{equimatchable} if every maximal matching of $G$ has the same cardinality. The structure of equimatchable graphs has been widely studied in the literature (see for instance \cite{ClawfreeEqm, triangle-free-equim, eqm_regular, ECEbip, oddcycles-eqm, EK2, Favaron, k-eqm, genus, Plummer}). The counterpart of equimatchable graphs for independent sets is called well-covered graphs: a graph is \emph{well-covered} if all its maximal independent sets have the same size. Well-covered graphs have been first introduced in \cite{plummerwc} and studied extensively since then. Given a graph $G$, the line graph $L(G)$ is the graph obtained by representing every edge of $G$ with a vertex in $L(G)$ and making two vertices of $L(G)$ adjacent if the edges of $G$ represented by these vertices have a common endvertex. It follows that a graph $G$ is equimatchable if and only if its line graph $L(G)$ is well-covered. Motivated by this link and the related research on well-covered graphs, we investigate in this paper the criticality of equimatchable graphs, which has been posed as an open question on well-covered graphs in \cite{LM2016} and reformulated in \cite{ESE} in terms of equimatchable graphs.

 A graph is \emph{1-well-covered} if it is well-covered and remains well-covered upon removal of any vertex \cite{Staples79}. Recently, the stability of being equimatchable with respect to edge removals has been studied in \cite{ESE}. An equimatchable graph $G$ is called \emph{edge-stable} if the graph obtained by the removal of any edge of $G$ remains equimatchable.  So, a graph is edge-stable equimatchable if and only if its line graph is 1-well-covered.  A \emph{shedding vertex} is a vertex $x$  such that for every independent set $I$ in the graph obtained by removing the neighborhood of $x$ and the vertex $x$, there exists some neighbor $y$ of $x$ such that $I\cup \{y\}$ is independent. Shedding vertices are strongly related to the combinatorial topology of independence complexes of graphs \cite{LM2016,wood2009}, and play an important role in identifying vertex decomposable graphs \cite{BC2014}. In \cite{LM2016}, Levit and Mandrescu showed that all vertices of a well-covered graph $G$ without isolated vertices are shedding if and only if $G$ is 1-well-covered, and posed their characterization as an open problem. A partial answer has been given in \cite{ESE} by showing that the characterization of edge-stable equimatchable graphs (with no component isomorphic to an edge) provides a characterization for well-covered line graphs such that all vertices are shedding. In the same paper  \cite{LM2016}, finding all well-covered graphs having no shedding vertex has been posed as an open problem. In terms of equimatchable graphs, this corresponds to the notion of criticality which is the opposite of stability. In this paper, we investigate edge-critical equimatchable graphs which correspond to well-covered line graphs with no shedding vertex; we provide their characterization in some cases and shed light to their structure from various perspectives.

For an equimatchable graph $G$, we say that $e \in E(G)$ is a \emph{critical edge} if the removal of $e$ from $G$ makes it non-equimatchable. Note that if an equimatchable graph $G$ is not edge-stable, then it has a critical edge. A graph $G$ is called \emph{edge-critical equimatchable}, denoted \emph{$\ECE$} for short, if $G$ is equimatchable and every $e \in E(G)$ is critical. We note that $\ECE$-graphs can be obtained from any equimatchable graph by recursively removing non-critical edges. By definition of $\ECE$-graphs, a graph $G$ with no component isomorphic to an edge is $\ECE$ if and only if $L(G)$ is well-covered and has no shedding vertex. Thus, the complete characterization of $\ECE$-graphs would enlighten the structure and the recognition of well-covered line graphs with no shedding vertex.

At the expense of losing the link with 1-well-covered graphs, one can also extend the notion of criticality of equimatchable graphs to vertex removals. An equimatchable graph $G$ is called \emph{vertex-critical} if $G$ looses its equimatchability by the removal of any vertex. We denote vertex-critical equimatchable graphs shortly by $\VCE$.

We start with formal definitions and frequently used results on equimatchability in Section \ref{sec:prem}. We proceed with the characterization of $\VCE$-graphs in Section \ref{sec:VCE}. Our findings point out that apart from an easily detectable simple structure, $\VCE$-graphs coincide with factor-critical equimatchable graphs and that they contain all factor-critical $\ECE$-graphs. This motivates once again the study of $\ECE$-graphs, which we start in Section \ref{sec:ECE}. We first show that $\ECE$-graphs are either 2-connected factor-critical or 2-connected bipartite or even cliques. Noting that 2-connected bipartite $\ECE$-graphs admit a simple characterization, we focus on factor-critical $\ECE$-graphs. We give a complete characterization of $\ECE$-graphs with connectivity 2. In Section \ref{sec:compare}, we provide a comparison of various subclasses of equimatchable graphs in terms of inclusions and intersections; $\ECE$-graphs, $\VCE$-graphs, edge-stable equimatchable graphs and factor-critical equimatchable graphs are illustrated in Figure \ref{fig: ECE VCE}. We conclude in Section \ref{sec:conclusion} with a discussion on factor-critical $\ECE$-graphs with connectivity at least 3.

%%%%%%%%%%%%%%%%%%%%%%%%%%%%%%%%%%%%%%%%%%%%%%%%%%%%%%%%%%%%%%%%%%%%%%%%%%%%%%%%%%%%%%%

\section{Definitions and Preliminaries}\label{sec:prem}

Given a graph $G=(V,E)$ and a subset of vertices $I$, $G[I]$ denotes the subgraph of $G$ induced by $I$, and $G\setminus I=G[V\setminus I]$. If $I$ is a singleton $\{v\}$, we denote $G\setminus I$ by $G - v$. We also denote by $G\setminus e$ the graph $G(V,E\setminus\{e\})$. For a subset $I$ of vertices, we say that $I$ is \emph{complete} to another subset $I'$ of vertices (or by abuse of notation, to a subgraph $H$) if all vertices of $I$ are adjacent to all vertices of $I'$ (respectively $H$). $K_r$ is a clique on $r$ vertices. For a vertex $v$, the neighborhood of $v$ in a subgraph $H$ is denoted by $N_H(v)$. We omit the subscript $H$ when it is clear from the context. For a subset $V'\subseteq V$, $N(V')$ is the union of the neighborhoods of the vertices in $V'$. The \textit{degree} of a vertex $v$ is the number of its neighbors, denoted by $d(v)$. For a graph $G$, $\Delta(G)$ denotes the maximum degree of a vertex in $G$. For a connected graph $G$, a $k$-cut set is a set of vertices whose removal disconnects the graph into at least two connected components. A 1-cut set is called a \emph{cut vertex}. The smallest $k$ such that $G$ has a $k$-cut set is called the \emph{connectivity} of $G$. For simplicity, we sometimes abuse the language and use a connected component and the graph induced by this connected component interchangably.

Given a graph $G$, the size of a maximum matching of $G$ is denoted by $\nu(G)$. A matching is \emph{maximal} if no other matching properly contains it. A matching $M$ is said to \emph{saturate} a vertex $v$ if $v$ is an endvertex of some edge in $M$, otherwise it leaves a vertex \emph{exposed}. If every matching $M$ of $G$ \emph{extends} to a perfect matching, in other words, for every matching $M$ (including a single edge) there is a perfect matching that contains $M$, then $G$ is called \emph{randomly matchable}. Clearly, if an equimatchable graph has a perfect matching, then it is randomly matchable. If $G - v$ has a perfect matching for every $v \in V(G)$, then $G$ is called \emph{factor-critical}. For short, a factor-critical equimatchable graph is denoted an \emph{$\EFC$-graph}. For a vertex $v$, a matching $M$ is called a \emph{matching isolating $v$} if $\{v\}$ is a connected component of $G \setminus V(M)$. If $G$ is factor-critical, it follows from its definition that for every vertex $v$, there is a matching $M_v$ isolating $v$. 

The following result serves as a guideline to study the structure of equimatchable graphs.

\begin{thm}\cite{Plummer} \label{Thm: all 2 connected graphs}
A 2-connected equimatchable graph is either factor-critical or bipartite or $K_{2t}$ for some $t\geq 1$.
\end{thm}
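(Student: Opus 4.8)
The plan is to combine the Gallai--Edmonds structure theorem with a case analysis driven by equimatchability, treating separately the cases where $G$ does and does not admit a perfect matching. Write $V(G)=D\cup A\cup C$ for the Gallai--Edmonds decomposition, where $D$ is the set of vertices missed by some maximum matching, $A=N(D)\setminus D$, and $C=V(G)\setminus(D\cup A)$. Recall that each component of $G[D]$ is factor-critical, $G[C]$ has a perfect matching, there are no edges between $C$ and $D$, and every maximum matching saturates $A$ into distinct components of $G[D]$, matches $C$ perfectly, and matches each component of $G[D]$ near-perfectly.

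If $G$ has a perfect matching, then since $G$ is equimatchable it is randomly matchable, as noted in Section~\ref{sec:prem}. I would then invoke the classical fact that the only connected randomly matchable graphs are $K_{2n}$ and $K_{n,n}$; the former is $K_{2t}$ and the latter is bipartite, so the statement holds in this case.

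Assume now that $G$ has no perfect matching, so $D\neq\emptyset$. I would first show $C=\emptyset$. If not, then $A\neq\emptyset$ (otherwise $G[C]$ would be a union of components of $G$ with a perfect matching, contradicting connectivity and the absence of a perfect matching), and some $a\in A$ has a neighbour $c\in C$. Starting from a maximum matching $M$, I replace the edge of $M$ at $a$, say $ad$ with $d\in D$, by the edge $ac$; this exposes $d$ and the former partner $c'$ of $c$, yielding a matching of size $|M|-1$. Using that $D$-neighbours of $d$ remain matched, that $A$ is still saturated, and that there are no $C$--$D$ edges, one checks every exposed vertex has only saturated neighbours, so this smaller matching is maximal, contradicting equimatchability. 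Hence $C=\emptyset$ and $V(G)=D\cup A$. If moreover $A=\emptyset$, then $V(G)=D$ and connectivity forces $G[D]$ to be a single factor-critical component, so $G$ is factor-critical.

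The remaining case $A\neq\emptyset$ is the heart of the argument, and here I claim $G$ is bipartite with parts $A$ and $D$. It suffices to prove that every component of $G[D]$ is a single vertex, so that $G[D]$ is edgeless, and that $A$ is independent. The guiding principle is that an essential vertex $a\in A$ cannot be left exposed by any maximal matching, since that matching would be maximum and would place $a$ in $D$; so it is enough to engineer, under any hypothetical violation, a maximal matching exposing such an $a$. For instance, if a component $D_i$ adjacent to $a$ had at least three vertices, its factor-criticality would let me match $D_i$ near-perfectly while saturating all neighbours of $a$ in $D_i$ and exposing a non-neighbour; performing this over all neighbour-components of $a$ and extending greedily would expose $a$, a contradiction. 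Independence of $A$ would follow by an analogous construction exposing an endpoint of a putative edge inside $A$, and $2$-connectivity is used to control how the components of $G[D]$ attach to $A$ and to prevent a single vertex from carrying all external edges. I expect this final case to be the main obstacle: whereas the perfect-matching case and the reduction $C=\emptyset$ are clean edge-swapping arguments, ruling out nontrivial $D$-components requires carefully building a maximal matching that exposes a vertex known to be essential and verifying, via factor-criticality and $2$-connectivity, that such a matching always exists unless every $D$-component is trivial. Once $G[D]$ is edgeless and $A$ is independent, $G$ is bipartite and the proof is complete.
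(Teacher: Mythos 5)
The paper states this theorem as a citation to \cite{Plummer} and gives no proof of its own, so there is nothing internal to compare against; your Gallai--Edmonds route is essentially that of the original source. Your first two steps are correct and complete. The perfect-matching case reduces to Sumner's theorem exactly as in Section \ref{sec:prem}. The edge swap replacing $ad$ and $cc'$ by $ac$ does produce a maximal matching of size $\nu(G)-1$ whenever $C\neq\emptyset$: the newly exposed vertices $d$ and $c'$ have all neighbours saturated because there are no $C$--$D$ edges and distinct components of $G[D]$ are pairwise non-adjacent. Independence of $A$ is even easier than your sketch suggests: any edge $a_1a_2$ inside $A$ extends to a maximal, hence maximum, matching, while Gallai--Edmonds forces every maximum matching to match each vertex of $A$ into a component of $G[D]$; no exposure construction is needed.

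The genuine gap is exactly where you flagged it, and the construction you describe does fail in concrete configurations. To expose $a\in A$ you must saturate $N(a)\subseteq D$ without using $a$; but if $a$ is complete to a component $D_i$ there is no near-perfect matching of $D_i$ ``exposing a non-neighbour,'' and if $D_i$ is a singleton $\{w\}$ you must spend a second $A$-vertex on $w$, so the greedy completion need not exist --- indeed it never exists, since by definition no maximal (equivalently maximum) matching of an equimatchable graph exposes a vertex of $A$. You are targeting the wrong object. What you actually need is: if some component $D_1$ of $G[D]$ has at least $3$ vertices, then $G$ admits a deficient maximal matching. Since $G$ has no perfect matching and $A\neq\emptyset$, the number $c$ of components of $G[D]$ satisfies $c\geq |A|+1\geq 2$; by $2$-connectivity at least two vertices of $A$ have neighbours in $D_1$, and these neighbours cannot all coincide in a single $u\in D_1$ (else $u$ would be a cut vertex separating $D_1-u\neq\emptyset$ from the rest). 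Choose $a_1u,a_2v\in E(G)$ with $a_1\neq a_2$ in $A$ and $u\neq v$ in $D_1$, and extend $\{a_1u,a_2v\}$ to any maximal matching $M$. Writing $e_i$ for the number of $M$-edges from $A$ into $D_i$, each odd component $D_i$ with $e_i$ even retains an exposed vertex, and since $e_1\geq 2$ and $\sum_i e_i\leq |A|$, at least $c-|A|+2$ vertices are exposed, whereas a maximum matching exposes exactly $c-|A|$. Hence $|M|<\nu(G)$, contradicting equimatchability. This forces every component of $G[D]$ to be a singleton and completes the bipartite case.
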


In the view of Theorem \ref{Thm: all 2 connected graphs}, a systematic way to study various properties of (subclasses of) equimatchable graphs is to consider i) equimatchable graphs with a cut vertex, ii) 2-connected EFC-graphs, iii) 2-connected bipartite equimatchable graphs, and iv) $K_{2t}$ for some $t\geq 1$.  

The case of bipartite graphs has been settled as follows.

\begin{lem}\cite{Plummer} \label{Lem: chr of equim bip graph}
A connected bipartite graph $G=(U \cup W, E)$, $\vert U \vert \leq \vert W \vert$ is equimatchable if and only if for every $u \in U$, there exists a non-empty set $S\subseteq N(u)$ such that $\vert N(S) \vert \leq \vert S \vert $. 
\end{lem}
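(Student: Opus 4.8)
The plan is to translate the arithmetic condition into a statement about which vertices of $U$ a maximal matching can leave exposed, and to organize the whole equivalence around the clean fact that, under $|U|\le|W|$, a connected bipartite graph is equimatchable precisely when every maximal matching saturates $U$. Call a vertex $u\in U$ \emph{avoidable} if some maximal matching of $G$ leaves $u$ exposed. The first step is the following reformulation: $u$ \emph{fails} the local condition (there is no nonempty $S\subseteq N(u)$ with $|N(S)|\le|S|$) if and only if $u$ is avoidable. Indeed, failure means $|N(S)|\ge|S|+1$ for every nonempty $S\subseteq N(u)$; since each vertex of such an $S$ is adjacent to $u$ we have $u\in N(S)$, so $|N(S)\setminus\{u\}|=|N(S)|-1\ge|S|$, which is exactly Hall's condition for matching $N(u)$ into $U\setminus\{u\}$. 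Thus there is a matching $M_0$ saturating $N(u)$ and missing $u$; extending $M_0$ to a maximal matching keeps $u$ exposed, because all neighbors of $u$ are already saturated. Conversely, a maximal matching exposing $u$ must saturate all of $N(u)$ into $U\setminus\{u\}$, and the necessity direction of Hall's theorem returns $|N(S)|\ge|S|+1$ for every nonempty $S\subseteq N(u)$. Hence the condition in the statement holds for every $u\in U$ if and only if no vertex of $U$ is avoidable, i.e.\ every maximal matching saturates $U$.

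With this reformulation one direction is immediate. If every maximal matching saturates $U$, then every maximal matching has exactly $|U|$ edges, so all maximal matchings have equal size and $G$ is equimatchable. This settles the implication from the condition to equimatchability.

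For the converse I would argue contrapositively: assuming some maximal matching $M$ exposes a vertex $u_0\in U$, I would exhibit two maximal matchings of different sizes. If $M$ is \emph{not} maximum, we are done at once, since a maximum matching is itself maximal and has strictly more edges than $M$. The delicate case is when every maximal matching exposing a vertex of $U$ is in fact maximum; then $\nu(G)<|U|$, so there is a set $S\subseteq U$ with $|N(S)|<|S|$, which I take to be inclusion-minimal, whence $|N(S)|=|S|-1$ and $|S|\ge 2$ (a singleton would force an isolated vertex, contradicting connectivity). Because the vertices of $S$ have all their neighbors inside $N(S)$ while $|U|\le|W|$ leaves vertices outside the block $S\cup N(S)$, connectivity yields an edge $u^{+}w^{*}$ with $u^{+}\in U\setminus S$ and $w^{*}\in N(S)$. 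I then build a maximal matching $M'$ that first commits $u^{+}w^{*}$, occupying the bottleneck $w^{*}$: the vertices of $S$ can now only be matched into $N(S)\setminus\{w^{*}\}$, a set of size $|S|-2$, so at least two vertices of $S$ are stranded, and since all their neighbors lie in the now-exhausted set $N(S)$, they remain exposed in every completion of $M'$. Thus $M'$ exposes at least two vertices of $U$ inside $S$, whereas a matching built from a near-perfect matching of the block exposes only one; comparing these two maximal matchings forces a strict difference in size.

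The main obstacle is exactly this last step: turning ``$M'$ strands two extra vertices of $S$'' into a genuine inequality of matching sizes, i.e.\ controlling what the greedy completion does on $G\setminus(S\cup N(S))$ so that the wasted edge is not silently compensated elsewhere on the $U$-side. When the global deficiency $|U|-\nu(G)$ equals $1$ this is transparent, since any matching exposing two vertices of $U$ already has fewer than $\nu(G)$ edges; in the presence of larger deficiency one must either iterate the construction on minimal violators or run an induction on $|V(G)|$, peeling off the block $S\cup N(S)$ while tracking its external edges to $N(S)$. The Hall-type reformulation of the first paragraph is the conceptual core that reduces the entire lemma to this single combinatorial point.
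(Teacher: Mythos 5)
The paper does not prove this lemma at all --- it is quoted from Lesk, Plummer and Pulleyblank --- so there is no in-paper argument to compare against; I can only assess your proof on its own terms. Your Hall-type reformulation (the local condition fails at $u$ if and only if some maximal matching leaves $u$ exposed, because $|N(S)|\ge|S|+1$ for all nonempty $S\subseteq N(u)$ is exactly Hall's condition for matching $N(u)$ into $U\setminus\{u\}$) is correct and is the right conceptual core; the forward implication (condition $\Rightarrow$ every maximal matching saturates $U$ $\Rightarrow$ all maximal matchings have size $|U|$) is complete. Note in passing that what you have proved en route is precisely Corollary \ref{cor:bip-saturate-U}.

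The converse, however, has a genuine gap, and you have located it yourself: when the deficiency $d=|U|-\nu(G)$ is at least $2$, your matching $M'$ through $u^{+}w^{*}$ is only guaranteed to expose $2$ vertices of $S$, while the maximum matching already exposes $d\ge 2$ vertices of $U$, so no size discrepancy follows; the proposed remedies (iterating on minimal violators, induction on $|V(G)|$) are not carried out and it is not clear they terminate usefully. The concrete fix is to abandon the \emph{inclusion-minimal} violator, whose defect is only $1$, in favour of a \emph{maximum-deficiency} set: once you may assume $M$ is maximum (otherwise you are done, as you say), the K\H{o}nig--Ore defect formula, or equivalently the set of $U$-vertices reachable from the $M$-exposed vertices of $U$ by $M$-alternating paths, yields $S\subseteq U$ with $|S|-|N(S)|=d$. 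Since $|S|+|N(S)|=2|S|-d<|U|+|W|$ and no edge leaves $S$ except into $N(S)$, connectivity still produces an edge $u^{+}w^{*}$ with $u^{+}\in U\setminus S$ and $w^{*}\in N(S)$ (the degenerate possibility $S=U$ would force $|W|=|N(U)|=|U|-d<|U|$, contradicting $|U|\le|W|$). Any maximal matching $M'$ containing $u^{+}w^{*}$ then matches at most $|N(S)|-1=|S|-d-1$ vertices of $S$, hence exposes at least $d+1$ vertices of $U$ and has size at most $|U|-d-1<|U|-d=|M|$, giving the two maximal matchings of different sizes in a single step, with no induction needed. With that one substitution your argument closes.
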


Lemma \ref{Lem: chr of equim bip graph}, together with the well-known Hall's condition implies the following more insightful characterization of connected bipartite equimatchable graphs.

\begin{thm}[Hall's Theorem]\cite{hall}\label{thm:hall}
A bipartite graph $G = (A\cup B,E)$ has a matching saturating
all vertices in $A$ if and only if it satisfies $|N(S)| \geq |S|$ for every subset $S \subseteq A$.
\end{thm}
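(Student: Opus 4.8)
The plan is to separate the two implications, since the forward direction is immediate and essentially all the work lies in the converse. For necessity, suppose a matching $M$ saturates every vertex of $A$. Then for any $S\subseteq A$ the edges of $M$ incident to $S$ pair each $a\in S$ with a \emph{distinct} vertex lying in $N(S)$, so $|N(S)|\geq |S|$; this needs no further argument.

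For sufficiency I would argue by induction on $|A|$, the base case $|A|=1$ being clear since Hall's condition applied to $S=A$ forces the single vertex of $A$ to have a neighbor. For the inductive step I distinguish two cases according to how tight the hypothesis is. In the first case, assume $|N(S)|\geq |S|+1$ for every nonempty proper subset $S\subsetneq A$. Then I pick any edge $ab$ with $a\in A$, $b\in B$, delete both endpoints, and check that $G\setminus\{a,b\}$ still satisfies Hall's condition: deleting $b$ lowers each neighborhood by at most one, so for $S\subseteq A\setminus\{a\}$ one has $|N_{G\setminus\{a,b\}}(S)|\geq |N_G(S)|-1\geq (|S|+1)-1=|S|$. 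The inductive hypothesis then yields a matching saturating $A\setminus\{a\}$, to which I append the edge $ab$.

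In the second case, some nonempty proper subset $S\subsetneq A$ is \emph{tight}, meaning $|N(S)|=|S|$. I split $G$ into the subgraph $G_1$ induced by $S\cup N(S)$ and the subgraph $G_2$ induced by $(A\setminus S)\cup(B\setminus N(S))$. Hall's condition is inherited by $G_1$, and since $|S|<|A|$ induction gives a matching saturating $S$, which is in fact a perfect matching between $S$ and $N(S)$ because $|N(S)|=|S|$. The crux is to verify Hall's condition in $G_2$: for $T\subseteq A\setminus S$ one has the disjoint-union identity $N_G(S\cup T)=N(S)\cup N_{G_2}(T)$, where $N_{G_2}(T)=N_G(T)\setminus N(S)$, so that $|N_G(S\cup T)|=|N(S)|+|N_{G_2}(T)|$. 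Combining $|N_G(S\cup T)|\geq |S|+|T|$ with $|N(S)|=|S|$ forces $|N_{G_2}(T)|\geq |T|$. Induction then yields a matching saturating $A\setminus S$ in $G_2$, and the union of the two matchings saturates all of $A$.

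The main obstacle is precisely this propagation of Hall's condition to the residual graph $G_2$ after excising a tight set: the equality $|N(S)|=|S|$ is exactly what lets the deficiencies of $S$ and $T$ combine additively without loss, and it is here that the hypothesis is used in an essential rather than cosmetic way. An alternative I would keep in reserve, which sidesteps the case split, is the augmenting-path argument: take a maximum matching $M$ and, if some $a\in A$ is exposed, examine the set reachable from $a$ by $M$-alternating paths to extract a subset $S\subseteq A$ with $N(S)$ matched bijectively to $S\setminus\{a\}$, whence $|N(S)|=|S|-1<|S|$, contradicting the hypothesis; this is cleaner logically but borrows the machinery of alternating paths.
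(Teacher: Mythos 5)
Your proof is correct. The paper itself gives no proof of this statement: Hall's Theorem is quoted as a classical result with a citation to the literature, so there is no in-paper argument to compare against. What you have written is a complete and accurate rendition of the standard Halmos--Vaughan induction: the necessity direction is handled correctly via the injectivity of the matching on $S$; in the non-tight case the inequality $|N_{G\setminus\{a,b\}}(S)|\geq |N_G(S)|-1\geq |S|$ is valid because every nonempty $S\subseteq A\setminus\{a\}$ is a nonempty proper subset of $A$; and in the tight case the disjoint-union identity $|N_G(S\cup T)|=|N(S)|+|N_{G_2}(T)|$ together with $|N(S)|=|S|$ correctly propagates Hall's condition to the residual graph $G_2$. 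The alternating-path alternative you sketch in reserve is also sound. No gaps.
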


\begin{cor}\cite{ESE}\label{cor:bip-saturate-U}
Let $G = (U \cup W, E)$ be a connected bipartite graph with $|U| \leq |W|$. Then $G$ is equimatchable if and only if every maximal matching of $G$ saturates $U$.
\end{cor}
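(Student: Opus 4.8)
The plan is to prove the two implications separately, dispatching the ``if'' direction immediately and concentrating on the ``only if'' direction.

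For the easy direction (all maximal matchings saturate $U$ $\Rightarrow$ equimatchable): since $G$ is bipartite with parts $U$ and $W$, every edge of a matching covers exactly one vertex of $U$, so a matching saturating $U$ has exactly $|U|$ edges. Hence if every maximal matching saturates $U$, then every maximal matching has cardinality $|U|$; in particular all maximal matchings have the same size, so $G$ is equimatchable.

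For the hard direction I would first reduce the statement ``every maximal matching saturates $U$'' to the single numerical fact $\nu(G)=|U|$. Indeed, since $|U|\le|W|$ and $G$ is bipartite we always have $\nu(G)\le|U|$, with equality precisely when $G$ admits a matching saturating $U$; by Hall's Theorem (Theorem \ref{thm:hall}) this is equivalent to $|N(S)|\ge|S|$ for every $S\subseteq U$. On the other hand, equimatchability forces every maximal matching to have size $\nu(G)$. So once $\nu(G)=|U|$ is established, every maximal matching has exactly $|U|$ edges and therefore saturates $U$. Thus the whole direction comes down to showing that an equimatchable $G$ cannot violate Hall's condition on $U$.

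The crux is then to assume, for contradiction, that Hall's condition fails on $U$ and to fix an inclusion-minimal $S\subseteq U$ with $|N(S)|<|S|$. A short extremal argument gives $|N(S)|=|S|-1$ together with the key inequality: for every nonempty $T'\subseteq N(S)$ one has $|N(T')\cap S|\ge|T'|+1$. At this point I would invoke Lemma \ref{Lem: chr of equim bip graph}: equimatchability supplies, for any $u\in S$, a nonempty $S_u\subseteq N(u)\subseteq N(S)$ with $|N(S_u)|\le|S_u|$; applying the key inequality with $T'=S_u$ yields $|N(S_u)|\ge|S_u|+1$, a contradiction. Hence Hall's condition holds on $U$, $\nu(G)=|U|$, and the proof closes.

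I expect the main obstacle to be the extremal ``minimal deficient set'' step, namely the inequality $|N(T')\cap S|\ge|T'|+1$, since this is exactly what transfers a failure of Hall's condition on the $U$-side into a violation of the $W$-side condition provided by Lemma \ref{Lem: chr of equim bip graph}. I would prove it by contradiction: set $A=N(T')\cap S$, note $A$ is nonempty and proper in $S$, and observe $N(S\setminus A)\subseteq N(S)\setminus T'$, so $|N(S\setminus A)|\le|S|-1-|T'|$; comparing this with the Hall inequality $|N(S\setminus A)|\ge|S\setminus A|\ge|S|-|T'|$ guaranteed by minimality of $S$ produces $0\le-1$. Everything else is routine bookkeeping.
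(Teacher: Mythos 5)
Your proof is correct, and it follows exactly the route the paper indicates: the paper states this corollary without proof, saying only that it follows from Lemma \ref{Lem: chr of equim bip graph} together with Hall's Theorem (the actual proof being in the cited reference), and your argument is a valid realization of that derivation. The one step that needed care — transferring a failure of Hall's condition on $U$ into a violation of the $W$-side condition of Lemma \ref{Lem: chr of equim bip graph} via the minimal deficient set $S$ and the inequality $|N(T')\cap S|\ge |T'|+1$ — is handled correctly, so nothing is missing.
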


While studying equimatchable graphs with  a cut vertex, the following will be useful:

\begin{lem}\cite{eqm_regular}\label{lem:cut vertex equim}
Let $G$ be a connected equimatchable graph with a cut vertex $v$, then each connected component of $G-v$ is also equimatchable.
\end{lem}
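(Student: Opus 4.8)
The plan is to fix an arbitrary connected component $C$ of $G-v$, take two maximal matchings $M_1,M_2$ of $C$, and show $|M_1|=|M_2|$ by extending each of them to a maximal matching of the whole graph $G$ and then invoking the equimatchability of $G$. Since $C$ and the pair $M_1,M_2$ are arbitrary, this establishes that every component of $G-v$ is equimatchable.

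First I would record the elementary structural fact that every component of $G-v$ contains a neighbor of $v$: as $G$ is connected and $v$ is a cut vertex, each component must be joined to the rest of $G$ through $v$, so $v$ has at least one neighbor in each component. Write $H$ for the subgraph of $G$ induced by $v$ together with all components of $G-v$ other than $C$; then $G=G[V(C)\cup\{v\}]\cup H$, these two parts meet only in $v$, and $H$ is nonempty because $v$ is a cut vertex (so $G-v$ has at least two components).

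The main idea — and the step that resolves the only real difficulty — is to extend $M_1$ and $M_2$ using one and the same matching $F$ of $H$, chosen so that $F$ saturates $v$. Such an $F$ exists: pick a neighbor $w$ of $v$ lying in some component other than $C$ (possible by the fact above), start the matching with the edge $vw$, and greedily complete it to a maximal matching $F$ of $H$. I then claim that for $i\in\{1,2\}$ the set $M_i\cup F$ is a maximal matching of $G$. Indeed, no edge inside $C$ can be added because $M_i$ is maximal in $C$; no edge inside $H$ can be added because $F$ is maximal in $H$; and since $v$ is the only vertex through which $C$ communicates with $H$ and $v$ is already saturated by $F$, no edge incident to $v$ can be added either.

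Finally, since $G$ is equimatchable, all of its maximal matchings have a common size $\mu$, so $|M_i\cup F|=|M_i|+|F|=\mu$ for $i=1,2$, whence $|M_1|=|M_2|=\mu-|F|$. I expect the only genuine obstacle to be the following: if one extended $M_1,M_2$ by an \emph{arbitrary} maximal matching of $H$, the extension might be forced to match $v$ into $C$ for one of them but not the other (precisely when they differ on whether some neighbor of $v$ in $C$ is left exposed), creating a spurious difference of one edge and breaking the size comparison. Insisting that $F$ saturate $v$ is exactly what neutralizes this discrepancy, since then $v$ is never available to be matched into $C$; checking that this choice is always possible and that $M_i\cup F$ remains maximal is the crux of the argument.
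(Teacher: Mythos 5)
Your proof is correct. Note that the paper does not prove this lemma itself; it is imported from the cited reference \cite{eqm_regular}, so there is no in-paper argument to compare against. Your argument is the natural one and it is complete: every component of $G-v$ meets $N(v)$ by connectivity of $G$; choosing $F$ to contain an edge $vw$ with $w$ outside the fixed component $C$ is possible precisely because $v$ is a cut vertex; and the verification that $M_i\cup F$ is maximal in $G$ correctly accounts for all three kinds of edges (inside $C$, inside $H$, and from $v$ into $C$), using that there are no edges between distinct components of $G-v$. The observation you single out as the crux --- that $F$ must saturate $v$ so that the two extensions differ only inside $C$ --- is indeed the one point where a naive extension argument would break, and you handle it correctly.
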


It should be noted that in the studies of equimatchable graphs with respect to various properties in \cite{EK2, Kotbic, Favaron}, the case of factor-critical equimatchable graphs has been the most complicated one. The following basic observations will guide us through our proofs. Since the size of any maximal matching in a factor-critical equimatchable graph is $(n-1)/2$ where $n$ is the number of vertices of the graph, we have the following:
\begin{lem} \cite{ESE} \label{lem: defn equim}
Let $G$ be a factor-critical graph. $G$ is equimatchable if and only if there is no independent set $I$ with 3 vertices such that $G\setminus I$ has a perfect matching.
\end{lem}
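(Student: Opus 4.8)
The plan is to reformulate equimatchability in terms of the vertices left exposed by maximal matchings and then to manipulate matchings via augmenting paths. First I would record the basic parameters. Since $G$ is factor-critical, $G-v$ has a perfect matching for every $v$, so $n:=|V(G)|$ is odd and $\nu(G)=(n-1)/2$, with every maximum matching exposing exactly one vertex. Because maximum matchings are in particular maximal, $G$ is equimatchable precisely when every maximal matching has size $(n-1)/2$, that is, exposes exactly one vertex; equivalently, $G$ is \emph{not} equimatchable if and only if it admits a maximal matching exposing at least three vertices (the number of exposed vertices is always odd, as $n$ is odd). The second elementary fact I would isolate is that a matching $M$ is maximal if and only if its set of exposed vertices is independent, since the only way to enlarge $M$ is to add an edge joining two exposed vertices.

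The implication that producing such a triple $I$ forces non-equimatchability is then immediate. Given an independent set $I=\{a,b,c\}$ such that $G\setminus I$ has a perfect matching $M$, this $M$ is also a matching of $G$ whose exposed set is exactly $I$; since $I$ is independent, $M$ is maximal in $G$, while $|M|=(n-3)/2<(n-1)/2=\nu(G)$. Hence $G$ has maximal matchings of two distinct sizes and is not equimatchable.

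For the converse—that non-equimatchability yields such a triple—I would start from a maximal matching $M$ exposing a set $X$ with $|X|\ge 3$, which exists whenever $G$ is not equimatchable, and reduce $|X|$ down to $3$ by augmentation. As $M$ is not maximum, Berge's theorem supplies an $M$-augmenting path $P$ joining two exposed vertices $u,w\in X$; setting $M'=M\triangle P$ produces a matching of size $|M|+1$ whose exposed set is exactly $X\setminus\{u,w\}$, since every internal vertex of $P$ remains saturated. The key observation—and the only point that requires care—is that maximality is preserved: the new exposed set is a subset of the independent set $X$, hence is itself independent, so $M'$ is again maximal. Iterating this step decreases the exposed set by two each time while keeping the matching maximal, so after finitely many augmentations I reach a maximal matching exposing exactly three vertices. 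These three vertices form an independent set $I$ with $|I|=3$, and the matching saturates all of $V\setminus I$, i.e.\ is a perfect matching of $G\setminus I$, as required. The main obstacle is precisely ensuring that augmentation does not destroy maximality, and the subset-of-an-independent-set argument settles this cleanly, making the reduction to exactly three exposed vertices the crux of the proof.
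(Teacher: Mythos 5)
Your proof is correct. Note that the paper does not actually prove this lemma --- it is quoted from the reference [ESE] and justified only by the one-line remark that every maximal matching of a factor-critical equimatchable graph has size $(n-1)/2$ --- so there is no in-paper argument to compare against in detail. Your write-up supplies a complete standalone proof along exactly the lines that remark suggests: the forward direction (an independent triple $I$ with $G\setminus I$ perfectly matchable yields a maximal matching of deficient size) is immediate, and your converse correctly handles the only delicate point, namely that repeatedly augmenting along a Berge path between two exposed vertices shrinks the exposed set by two while keeping it a subset of the original independent exposed set, hence preserving maximality, until exactly three exposed vertices remain.
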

An equivalent reformulation of Lemma \ref{lem: defn equim} is the following:
\begin{cor}\label{cor: def EFC}
Let $G$ be a factor-critical equimatchable graph. Then every maximal matching of $G$ leaves exactly one vertex exposed. 
\end{cor}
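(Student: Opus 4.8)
The plan is to derive the statement from the elementary parity and counting consequences of factor-criticality, combined with the definition of equimatchability. The whole argument is essentially a cardinality computation, so I do not expect a genuine obstacle; the only point requiring a moment's care is pinning down the exact size of a maximum matching.

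First I would record that a factor-critical graph $G$ on $n$ vertices necessarily has $n$ odd. Indeed, by definition $G-v$ has a perfect matching for every vertex $v$, so $G-v$ has an even number of vertices, forcing $n$ to be odd and, in particular, precluding a perfect matching of $G$ itself. Taking any vertex $v$ and a perfect matching $M$ of $G-v$, and regarding $M$ as a matching of $G$, we obtain a matching of cardinality $(n-1)/2$ that saturates every vertex except $v$. Since $G$ has no perfect matching, this matching is maximum, so $\nu(G)=(n-1)/2$.

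Next I would invoke equimatchability. The maximum matching constructed above is in particular maximal, so by hypothesis every maximal matching $M'$ of $G$ satisfies $|M'|=\nu(G)=(n-1)/2$. This is exactly the fact quoted in the paragraph preceding Lemma \ref{lem: defn equim}, which may alternatively be cited directly.

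Finally a one-line count closes the argument: a matching of size $(n-1)/2$ saturates precisely $2\cdot\tfrac{n-1}{2}=n-1$ vertices, hence leaves exactly $n-(n-1)=1$ vertex of $G$ exposed. Applying this to an arbitrary maximal matching yields the claim that every maximal matching of $G$ leaves exactly one vertex exposed. As noted, there is no real difficulty here; the statement is an immediate corollary of factor-criticality plus equimatchability, and one could equally phrase it as the reformulation of Lemma \ref{lem: defn equim} advertised in the text.
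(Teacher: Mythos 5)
Your proof is correct and follows the same route the paper takes implicitly: the paper gives no separate proof, simply noting beforehand that every maximal matching of a factor-critical equimatchable graph has size $(n-1)/2$ and presenting the corollary as a reformulation of Lemma \ref{lem: defn equim}. Your write-up merely spells out the parity and counting details behind that remark, so there is nothing to add.
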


Another useful result on factor-critical equimatchable graphs  is the following. %: \textcolor{red}{TINAZ: do we need to add a proof?}
\begin{lem}\cite{EK2}\label{lem: gen isolating matching}
Let $G$ be a 2-connected factor-critical equimatchable graph. Let $v$ be a vertex of $G$ and $M_v$ a minimal matching isolating $v$. Then  $G\setminus (V(M_v)\cup \{v\})$ is isomorphic to $K_{2n}$ or $K_{n,n}$ for some $n\in \N$.
\end{lem}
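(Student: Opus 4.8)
The plan is to analyze the graph $H := G\setminus(V(M_v)\cup\{v\})$ and to show first that \emph{every maximal matching of $H$ is perfect}. Since $M_v$ isolates $v$, the component $\{v\}$ of $G\setminus V(M_v)$ has no other vertices, so $N(v)\subseteq V(M_v)$; in particular $v$ has no neighbour in $H$. Now take any maximal matching $N$ of $H$ and consider $M:=M_v\cup N$. I claim $M$ is a maximal matching of $G$: the set of vertices left exposed by $M$ is $\{v\}\cup X$, where $X\subseteq V(H)$ is the set exposed by $N$; there is no edge between $v$ and $X$ (as $N(v)\cap V(H)=\emptyset$), and no edge inside $X$ (by maximality of $N$ in $H$), so no edge of $G$ can be added to $M$. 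Since $G$ is a factor-critical equimatchable graph, Corollary \ref{cor: def EFC} forces $M$ to expose exactly one vertex. As $v$ is always exposed, we get $X=\emptyset$, i.e. $N$ is a perfect matching of $H$.

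Consequently every maximal matching of $H$, and hence of each connected component of $H$, is perfect, so each component is \emph{randomly matchable}. By the classical characterization of randomly matchable graphs \cite{LP}, every connected randomly matchable graph is isomorphic to $K_{2n}$ or to $K_{n,n}$ for some $n$; thus each component of $H$ has one of these two forms. It therefore remains to prove that $H$ is connected, after which the lemma follows.

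To establish connectedness I would use the minimality of $M_v$ together with the $2$-connectivity of $G$. Minimality gives that each edge of $M_v$ has an endpoint in $N(v)$, and $2$-connectivity gives that every component of $H$ has at least two neighbours inside $V(M_v)$ (its only possible outside neighbours, since it sees neither $v$ nor the other components). Assume for contradiction that $H$ is disconnected, with components $C$ and $D$. Choosing $c\in C$ and $d\in D$, the set $I=\{v,c,d\}$ is independent: $v$ is non-adjacent to $V(H)$, and $c,d$ lie in distinct components of $H$. Starting from the perfect matching $M_v\cup P_H$ of $G-v$, where $P_H$ is a perfect matching of $H$, I would reroute along an $M_v$-alternating path passing through $V(M_v)$ (such a path exists because $C$ and $D$ each attach to $V(M_v)$ by $2$-connectivity), so as to cover the two partners of $c$ and $d$ that are freed when $c,d$ are deleted. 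This yields a perfect matching of $G\setminus I$, which by Lemma \ref{lem: defn equim} contradicts the equimatchability of $G$. Hence $H$ is connected and the result follows.

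The main obstacle is precisely this last step: converting the qualitative fact that $C$ and $D$ reach $V(M_v)$ into an explicit $M_v$-alternating path joining the two freed partners, and thus a genuine perfect matching of $G\setminus\{v,c,d\}$. One must choose $c,d$ (and possibly the perfect matching $P_H$) carefully and exploit both the clique or complete-bipartite structure of the components and the way the edges of $M_v$ interconnect them, in order to guarantee that the rerouting closes up. The first two paragraphs are routine given Corollary \ref{cor: def EFC} and the randomly matchable characterization; the connectivity argument is where the real work lies.
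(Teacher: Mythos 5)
The first two paragraphs of your argument are correct and complete: writing $H = G\setminus(V(M_v)\cup\{v\})$, the observation that $N(v)\subseteq V(M_v)$ together with Corollary \ref{cor: def EFC} does show that every maximal matching of $H$ is perfect, hence that $H$ is randomly matchable and that each of its components is a $K_{2n}$ or a $K_{n,n}$ (Lemma \ref{lem:randomlymatchable}, which the paper quotes from Sumner rather than from \cite{LP}). Note also that there is no in-paper proof to compare against: Lemma \ref{lem: gen isolating matching} is imported verbatim from \cite{EK2}.

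The genuine gap is the connectedness of $H$, which is precisely what separates the lemma from the easy statement that each component is randomly matchable, and you have only sketched it. Concretely, after deleting $c\in C$ and $d\in D$ from the perfect matching $M_v\cup P_H$ of $G-v$, each of $C-c$ and $D-d$ has odd order and must therefore send exactly one vertex into $V(M_v)$. If the attachment vertices you can reach lie on the \emph{same} edge $ww'$ of $M_v$ (one component adjacent to $w$, the other to $w'$), the rerouting closes up at once; but if $C$ and $D$ attach only to \emph{distinct} edges $w_1w_1'$ and $w_2w_2'$ of $M_v$, then matching into $w_1$ and $w_2$ exposes $w_1'$ and $w_2'$, and nothing you have established produces the required alternating path joining them: 2-connectivity only gives $|N(C)|,|N(D)|\ge 2$, and the minimality of $M_v$ only gives that every edge of $M_v$ meets $N(v)$, which is useless once $v$ has been deleted. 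You acknowledge this obstacle yourself; as written, the proof establishes only that $H$ is a disjoint union of graphs of the form $K_{2n}$ and $K_{n,n}$, not that it is a single such graph, so the lemma is not proved.
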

%Let $G$ be a factor-critical equimatchable graph and $M$ be a matching of $G$, Then $G\setminus V(M)$ has exactly one odd component which is equimatchable and every even component of $G\setminus V(M)$ is randomly matchable.

Lastly, equimatchable graphs with a perfect matching are precisely randomly matchable graphs whose structure is well-known:

\begin{lem}
\label{lem:randomlymatchable}\cite{Sumner}
A connected graph is randomly matchable if and only if it is isomorphic to a $K_{2n}$ or a $K_{n,n} \ (n\geq1)$.
\end{lem}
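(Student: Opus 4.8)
The plan is to prove both implications, the reverse (structural) one being the substantial part. The guiding observation is that to contradict random matchability it suffices to exhibit a \emph{single} matching that cannot be extended to a perfect matching; indeed $G$ is randomly matchable if and only if every maximal matching of $G$ is already perfect, since any matching extends to a maximal one and a maximal matching that extends to a perfect matching must itself be perfect.

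For the easy direction, suppose $G\cong K_{2n}$ or $G\cong K_{n,n}$ and let $M$ be any matching. In $K_{2n}$ the set of $M$-exposed vertices has even size and induces a complete graph, hence has a perfect matching whose union with $M$ is perfect in $G$. In $K_{n,n}$ every edge of $M$ covers one vertex on each side, so $M$ leaves equally many vertices exposed in each part; these induce a balanced complete bipartite graph, which again has a perfect matching extending $M$. Thus every matching extends. For the reverse direction, let $G$ be connected and randomly matchable. Applying the hypothesis to the empty matching shows $G$ has a perfect matching $M$, so $|V(G)|=2n$. I will show that if $G$ is not complete then it is complete bipartite, so assume there exist non-adjacent vertices; in each case below the goal is to build a matching that does not extend to a perfect matching.

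Bipartite case. Suppose $G=(A\cup B,E)$ is bipartite; the perfect matching forces $|A|=|B|=n$. If $G\neq K_{n,n}$ there are $a\in A$, $b\in B$ with $ab\notin E$. If $G-a-b$ has a perfect matching $M_0$, then $M_0$ is a matching of $G$ whose only exposed vertices are $a$ and $b$, and since $ab\notin E$ it does not extend, a contradiction. Otherwise Hall's condition (Theorem~\ref{thm:hall}) fails in $G-a-b$, so some $S\subseteq A\setminus\{a\}$ satisfies $|N(S)\setminus\{b\}|<|S|$; since $G$ has a perfect matching, Hall's condition holds in $G$, forcing $|N(S)|=|S|$ with $b\in N(S)$. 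Then $S':=N(S)$ is a tight proper set which $M$ matches bijectively onto $S$. Because $G$ is connected and $S$ has no neighbour outside $S'$, some $b_0\in S'$ has a neighbour $a_0\in A\setminus S$. Starting from the restriction of $M$ to the perfect matching of $G[S\cup S']$ and replacing the edge at $b_0$ by $a_0b_0$ exposes the former partner $a_1\in S$ of $b_0$, all of whose neighbours lie in the now fully saturated $S'$. Hence $a_1$ can never be covered, so this matching does not extend, a contradiction; therefore $G=K_{n,n}$.

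Non-bipartite case. Suppose $G$ contains an odd cycle, and fix non-adjacent $x,y$; I claim this is impossible, so $G=K_{2n}$. If $G-x-y$ has a perfect matching we conclude as above. Otherwise $G-x-y$ has even order but no perfect matching, so Tutte's $1$-factor theorem yields $U\subseteq V(G)\setminus\{x,y\}$ with $o\big((G-x-y)-U\big)\geq|U|+2$, where $o(\cdot)$ counts odd components. Setting $T:=U\cup\{x,y\}$ gives $o(G-T)\geq|T|$, while the perfect matching forces $o(G-T)\leq|T|$; hence $T$ is a tight Tutte barrier containing $x,y$, and $M$ matches each odd component of $G-T$ to a distinct vertex of $T$. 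I expect the non-bipartite step to be the main obstacle: one must convert this barrier into a blocking matching in the spirit of the bipartite argument, using that every neighbour of an odd component outside itself lies in $T$, so that saturating the $T$-neighbourhood of a chosen odd component by the remaining components leaves one vertex of it exposed and unreachable. Carrying out this Gallai--Edmonds-style bookkeeping is the technical heart of the proof; it produces a non-extending matching and thereby forces $G=K_{2n}$, completing the characterization.
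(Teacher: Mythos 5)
The paper does not actually prove this lemma; it is quoted verbatim from Sumner's paper \cite{Sumner}, so there is no in-text argument to compare yours against. Judged on its own, your write-up is genuinely incomplete. The easy direction and the bipartite case are correct and nicely done: the reduction to ``some maximal matching is not perfect,'' the use of Hall's theorem to extract a tight set $S$ with $|N(S)|=|S|$, and the swap at $b_0$ that strands $a_1$ all check out. But the non-bipartite case --- which is where the conclusion $G\cong K_{2n}$ actually has to be earned --- is only a sketch, and you say so yourself (``I expect the non-bipartite step to be the main obstacle\dots Carrying out this Gallai--Edmonds-style bookkeeping is the technical heart of the proof''). A proof cannot defer its technical heart.

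Moreover, the route you sketch is not obviously completable. The mere existence of a tight Tutte barrier $T$ containing two non-adjacent vertices is \emph{not} an obstruction to random matchability: in $K_{n,n}$ one side $A$ is a tight barrier ($o(G-A)=n=|A|$) containing many non-adjacent pairs, yet $K_{n,n}$ is randomly matchable. So your plan of ``saturating the $T$-neighbourhood of a chosen odd component by the remaining components'' must fail for $K_{n,n}$ and can only succeed by invoking non-bipartiteness somewhere --- and the sketch never says where or how. In particular, you have not shown that $N(C_1)\cap T$ \emph{can} be saturated avoiding $C_1$ (a vertex of $T$ may have all its neighbours inside $C_1$), nor that an odd cycle forces this to be possible. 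To repair the proof you should either complete this step honestly or switch to a standard argument, e.g.\ show that for every edge $uv$ the graph $G-u-v$ is again connected and randomly matchable and induct, or use an exchange argument on perfect matchings (if $x\not\sim y$ and $x\sim z\sim y$, extend $\{xz\}$ to a perfect matching $M$, swap $yy'$ for $zy$, and deduce $xy'\in E$) to pin down the neighbourhood structure directly.
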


%%%%%%%%%%%%%%%%%%%%%%%%%%%%%%%%%%%%%%%%%%%%%%%%%%%%%%%%%%%%%%%%%%%%%%%%%%%%%%%%%%%%%
\section{Vertex-critical Equimatchable graphs}\label{sec:VCE}
%%%%%%%%%%%%%%%%%%%%%%%%%%%%%%%%%%%%%%%%%%%%%%%%%%%%%%%%%%%%%%%%%%%%%%%%%%%%%%%%%%%%%

Let us first investigate vertex-critical equimatchable graphs. As suggested by Theorem \ref{Thm: all 2 connected graphs}, we will proceed seperately with $\VCE$-graphs with a cut vertex, 2-connected bipartite $\VCE$-graphs, even cliques (showing that all three of them are empty), and finally with 2-connected factor-critical $\VCE$-graphs. As a result, we will show that $\VCE$-graphs are almost equivalent to factor-critical equimatchable graphs. Building upon the results obtained in this section, we will show later that $\VCE$-graphs contain factor-critical $\ECE$-graphs. This motivates even further the study of factor-critical $\ECE$-graphs.

Recall that a graph $G$ is $\VCE$  if $G$ is equimatchable and $G-v$ is non-equimatchable for every $v \in V(G)$. 
Let us call a vertex $v \in V(G)$ \emph{strong} (in $G$) if every maximal matching of $G$ saturates $v$, (or equivalently there is no maximal matching of $G -v$ saturating all neighbours of $v$), otherwise it is called \emph{weak} (in $G$).
%, if $v$ is not an isolated vertex, there is no maximal matching of $G-v$ saturating all neighbors of $v$.

We have the following by noticing that every maximal matching of $G$ saturates $v$ if and only if the size of every maximal matching of $G$ decreases exactly by one when $v$ is removed from $G$:

\begin{rem}\label{rem: strong cut vertex}
Let $G$ be an equimatchable graph. Then $v$ is a strong vertex if and only if $\nu(G-v)=\nu(G)-1$.
\end{rem}

\begin{prop}\label{prop: when $G-v$ is equim.}
Let $G$ be an equimatchable graph. Then, for a vertex $v\in V(G)$, the graph $G-v$ is equimatchable if and only if one of the following holds:
\begin{itemize}
\item[$(i)$] $v$ is a strong vertex in $G$,
\item[$(ii)$] all vertices in $N(v)$ are strong in $G-v$.  
\end{itemize} 
\end{prop}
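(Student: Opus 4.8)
The plan is to classify the maximal matchings of $G-v$ according to their cardinality and to show that exactly two values can occur, controlled by whether $N(v)$ is saturated. Write $\nu=\nu(G)$. I would first prove the key dichotomy: every maximal matching $M$ of $G-v$ satisfies either $|M|=\nu$, in which case $M$ saturates $N(v)$, or $|M|=\nu-1$, in which case $M$ leaves some vertex of $N(v)$ exposed. Granting this, $G-v$ is equimatchable precisely when all of its maximal matchings fall into the same one of the two classes, and it only remains to identify each class with one of the stated conditions.

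For the dichotomy I would argue through maximal matchings of $G$. If $M$ is a maximal matching of $G-v$ that saturates $N(v)$, then no edge incident to $v$ can be added, and since $M$ is maximal in $G-v$ no edge of $G-v$ can be added either; hence $M$ is already a maximal matching of $G$ exposing $v$, so $|M|=\nu$ by equimatchability of $G$. If instead $M$ leaves some neighbour $w$ of $v$ exposed, then $M$ is not maximal in $G$, but $M\cup\{vw\}$ is: the only vertices it could fail to cover are the exposed neighbours $w'$ of $v$, and each such $w'$ has all of its $G-v$-neighbours already saturated (otherwise $M$ would not be maximal in $G-v$) while its only remaining neighbour $v$ is now used. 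Thus $M\cup\{vw\}$ is a maximal matching of $G$, so $|M|+1=\nu$ and $|M|=\nu-1$.

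It then suffices to translate the two classes into conditions $(i)$ and $(ii)$. The maximal matchings of $G-v$ of size $\nu-1$ are exactly those exposing a neighbour of $v$; so \emph{all} maximal matchings of $G-v$ have size $\nu-1$ if and only if none of them saturates $N(v)$, which by definition of a strong vertex is precisely that $v$ is strong in $G$, i.e.\ condition $(i)$. Dually, all maximal matchings of $G-v$ have size $\nu$ if and only if every maximal matching of $G-v$ saturates each vertex of $N(v)$, i.e.\ every vertex of $N(v)$ is strong in $G-v$, which is condition $(ii)$. Combining these, $G-v$ is equimatchable if and only if $(i)$ or $(ii)$ holds. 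I expect the only delicate point to be verifying that a maximal matching of $G-v$ exposing a neighbour of $v$ extends, by adding a single edge at $v$, to a matching that is genuinely maximal in $G$ — in particular handling the case where $v$ has several exposed neighbours — since the entire size dichotomy, and hence the whole equivalence, rests on this step.
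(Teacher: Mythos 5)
Your proof is correct and rests on the same mechanism as the paper's: a maximal matching of $G-v$ either saturates $N(v)$ and is already maximal in $G$ (hence has size $\nu(G)$), or extends by one edge at $v$ to a maximal matching of $G$ (hence has size $\nu(G)-1$), and equimatchability of $G-v$ amounts to all maximal matchings landing in the same class. The paper merely packages this as two separate implications with a case split on whether $v$ is strong, whereas you state the size dichotomy up front; the content is the same.
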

\begin{proof}
Let $G$ be an equimatchable graph, and assume that $G-v$ is equimatchable for a vertex $v\in V(G)$. There are two possibilities: Either $\nu(G-v)=\nu(G)-1$ and then  $v$ is a strong vertex by Remark \ref{rem: strong cut vertex}. Otherwise, $\nu(G-v)=\nu(G)$, i.e., $v$ is not strong, hence it is a weak vertex.  Then we claim that $N(v)$ is a set of strong vertices in $G-v$. Indeed, if $u \in N(v)$ is a weak vertex in $G-v$, then there exists a maximal matching $M$ of $G-v$ leaving $u$ exposed with $\vert M \vert=\nu(G-v)=\nu(G)$. Then $M \cup \{vu\}$ is a maximal matching in $G$, a contradiction with the equimatchability of $G$. Hence $N(v)$ is a set of strong vertices in $G-v$.

We now suppose the converse. Let $G$ be an equimatchable graph, and let $v$ be a strong vertex. Then $\nu(G-v)=\nu(G)-1$ and the size of each maximal matching decreases exactly by one. It follows that $G-v$ is equimatchable.  Now, let $N(v)$ be a set of strong vertices in $G-v$. Then every maximal matching of $G-v$ saturates $N(v)$, and therefore those are also maximal matchings of $G$. Since $G$ is equimatchable, they all have the same size, thus $G-v$ is also equimatchable.
\end{proof}

By Lemma \ref{lem:cut vertex equim}, if an equimatchable graph has a cut vertex, then its removal from the graph leaves an equimatchable graph. Then we have the following.

\begin{prop}\label{prop:VCE-graphs are 2-connected}
$\VCE$-graphs are 2-connected.
\end{prop}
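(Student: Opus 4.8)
The plan is to argue by contradiction, exploiting Lemma~\ref{lem:cut vertex equim} exactly as anticipated in the sentence preceding the statement. Suppose $G$ is a $\VCE$-graph that is not 2-connected. Since the equimatchable graphs under consideration are connected, and a $\VCE$-graph necessarily has at least three vertices (neither $K_1$ nor $K_2$ is $\VCE$, as deleting a vertex from either leaves an equimatchable graph), the failure of 2-connectivity is equivalent to $G$ having a cut vertex $v$.

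The core of the argument is to show that deleting this cut vertex cannot destroy equimatchability, which directly contradicts the definition of a $\VCE$-graph. First I would apply Lemma~\ref{lem:cut vertex equim} to conclude that every connected component of $G-v$ is equimatchable. The one remaining step is to reassemble the components: a disjoint union of equimatchable graphs is again equimatchable, because a maximal matching of the union is precisely a union of maximal matchings of the components, and since each component has all of its maximal matchings of one fixed size, every maximal matching of $G-v$ has the same total cardinality. Hence $G-v$ is equimatchable, contradicting the assumption that $G$ loses equimatchability upon removal of $v$. Consequently $G$ has no cut vertex, and being connected on at least three vertices, $G$ is 2-connected.

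The proof is short, and the only point that requires a little care is the disjoint-union observation, which is what converts the component-wise conclusion of Lemma~\ref{lem:cut vertex equim} into the statement that $G-v$ itself is equimatchable. I would also be explicit about the standing convention that the graphs here are connected, since this is where the hypothesis genuinely bites: without it, the disjoint union of two copies of $C_5$ would be a vertex-critical equimatchable graph (each single deletion turns one copy into the non-equimatchable path $P_4$) that is nevertheless disconnected, hence not 2-connected. Thus connectivity, rather than the matching-theoretic content, is the subtle ingredient, while the obstruction coming from a cut vertex is eliminated immediately by Lemma~\ref{lem:cut vertex equim}.
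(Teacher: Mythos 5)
Your proof is correct and follows exactly the route the paper takes: the proposition is derived immediately from Lemma~\ref{lem:cut vertex equim}, with the paper leaving implicit the observation (which you rightly spell out) that a disjoint union of equimatchable graphs is equimatchable, so that $G-v$ itself remains equimatchable and contradicts vertex-criticality. Your remark about the standing connectivity convention is a valid clarification rather than a deviation in method.
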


\begin{prop}\label{prop: there is no bip VCE graph}
There is no bipartite $\VCE$-graph.
\end{prop}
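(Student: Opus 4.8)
The plan is to show that every bipartite equimatchable graph admits at least one vertex whose deletion leaves it equimatchable; since a $\VCE$-graph by definition loses equimatchability under the removal of \emph{any} vertex, this rules out bipartite $\VCE$-graphs entirely. First I would invoke Proposition \ref{prop:VCE-graphs are 2-connected} to assume that a putative bipartite $\VCE$-graph $G$ is $2$-connected, so in particular $G$ is connected and therefore carries a well-defined bipartition $G=(U\cup W,E)$; relabelling if necessary I take $|U|\le|W|$, and note that connectivity with at least one edge forces $U\neq\varnothing$. The whole argument then reduces to locating a single \emph{strong} vertex, because strongness in $G$ is precisely what condition $(i)$ of Proposition \ref{prop: when $G-v$ is equim.} needs.

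The key step is the observation that the smaller side $U$ consists entirely of strong vertices. Indeed, Corollary \ref{cor:bip-saturate-U} asserts that a connected bipartite graph with $|U|\le|W|$ is equimatchable if and only if every maximal matching saturates $U$. Applying this to the equimatchable graph $G$, every maximal matching of $G$ saturates $U$, so each $u\in U$ is saturated by \emph{every} maximal matching of $G$; this is exactly the definition of $u$ being strong in $G$. Choosing any $u\in U$ (possible since $U\neq\varnothing$) and applying Proposition \ref{prop: when $G-v$ is equim.}$(i)$, the graph $G-u$ is equimatchable. This contradicts the assumption that $G$ is $\VCE$, and I would conclude that no bipartite $\VCE$-graph exists.

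I do not expect a genuine obstacle here; rather, the point to be careful about is to route the conclusion through condition $(i)$ of Proposition \ref{prop: when $G-v$ is equim.} (strongness of $u$ \emph{in $G$}) rather than through the more delicate condition $(ii)$, which would force me to analyze strongness of neighbours in the modified graph $G-u$. A secondary benefit of using Corollary \ref{cor:bip-saturate-U} is that it handles both regimes uniformly: whether $|U|<|W|$ or $|U|=|W|$ (the latter giving a perfect matching, so $G\cong K_{n,n}$ by Lemma \ref{lem:randomlymatchable}), the same sentence ``every maximal matching saturates $U$'' produces the required strong vertex, so no case split is needed.
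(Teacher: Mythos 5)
Your proof is correct and follows essentially the same route as the paper's: both use Corollary \ref{cor:bip-saturate-U} to conclude that every vertex of the smaller side $U$ is strong, and then apply condition $(i)$ of Proposition \ref{prop: when $G-v$ is equim.} to see that $G-u$ remains equimatchable, contradicting vertex-criticality. Your extra appeal to Proposition \ref{prop:VCE-graphs are 2-connected} to secure connectedness (needed to apply Corollary \ref{cor:bip-saturate-U}) is a reasonable small addition that the paper leaves implicit.
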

\begin{proof}
Let $G=(U \cup W, E)$ be a bipartite equimatchable graph  with $\vert U \vert \leq \vert W \vert$. By Corollary \ref{cor:bip-saturate-U}, every vertex of $U$ is strong. It follows that  $G-u$ is equimatchable  for every $u \in U$ by Proposition \ref{prop: when $G-v$ is equim.}. Therefore, $G$ is not VCE. Hence, there is no a bipartite VCE-graph. 
\end{proof}

Since every complete graph is an equimatchable graph, the removal of a vertex from $K_{t}$ yields an equimatchable graph. Thus we have the following.

\begin{prop}\label{prop:K2t-is-not-VCE}
$K_{t}$ for some integer $t\geq 2 $ is not $\VCE$.
\end{prop}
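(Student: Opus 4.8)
The claim is that $K_t$ for $t \geq 2$ is not $\VCE$. The plan is to show that removing any vertex from $K_t$ leaves an equimatchable graph, which by definition means $K_t$ cannot be $\VCE$.

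First I would recall that every complete graph $K_t$ is equimatchable, since any maximal matching of a clique saturates all but at most one vertex, so all maximal matchings have size $\lfloor t/2 \rfloor$. This handles the requirement that $G$ itself be equimatchable in the definition of $\VCE$. Next, I would observe that $K_t - v = K_{t-1}$, which is again a complete graph and hence equimatchable by the same reasoning. Therefore there exists a vertex $v$ (in fact every vertex) whose removal does \emph{not} harm equimatchability.

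Finally, I would invoke the definition of $\VCE$ directly: a graph is $\VCE$ only if $G - v$ is non-equimatchable for \emph{every} $v \in V(G)$. Since $K_t - v = K_{t-1}$ is equimatchable for every $v$, the graph $K_t$ fails this condition (indeed fails it for all $v$, not just one), so $K_t$ is not $\VCE$. The argument is essentially immediate from the closure of complete graphs under vertex deletion.

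There is no real obstacle here; the only point to be slightly careful about is the boundary case. For $t = 2$, $K_2 - v = K_1$, a single isolated vertex, whose unique maximal matching is the empty matching, so $K_1$ is trivially equimatchable and the argument still goes through. Thus the statement holds for all $t \geq 2$.
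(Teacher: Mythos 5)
Your argument is correct and matches the paper's own (one-line) justification: the paper simply notes that every complete graph is equimatchable, so $K_t - v \cong K_{t-1}$ remains equimatchable, contradicting the definition of $\VCE$. Your additional check of the boundary case $t=2$ is a harmless extra detail.
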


Theorem \ref{Thm: all 2 connected graphs} together with Propositions \ref{prop:VCE-graphs are 2-connected}, \ref{prop: there is no bip VCE graph} and  \ref{prop:K2t-is-not-VCE} imply the following:

\begin{cor}
VCE-graphs are 2-connected factor-critical.
\end{cor}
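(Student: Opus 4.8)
The plan is to obtain the statement as a direct consequence of the structural trichotomy in Theorem \ref{Thm: all 2 connected graphs} together with the three preceding propositions, each of which eliminates one of the non-desired cases. Since every $\VCE$-graph is equimatchable by definition, and is 2-connected by Proposition \ref{prop:VCE-graphs are 2-connected}, such a graph $G$ falls under the hypotheses of Theorem \ref{Thm: all 2 connected graphs}. Hence $G$ is factor-critical, bipartite, or isomorphic to $K_{2t}$ for some $t \geq 1$; the task is to discard the last two options.

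First I would rule out the bipartite case: Proposition \ref{prop: there is no bip VCE graph} states precisely that no bipartite graph is $\VCE$, so a bipartite $G$ contradicts the assumption that $G$ is $\VCE$. Next I would discard the clique case. Here one must check that Proposition \ref{prop:K2t-is-not-VCE}, which excludes $K_t$ for all integers $t \geq 2$, indeed covers every graph of the form $K_{2t}$ with $t \geq 1$ appearing in Theorem \ref{Thm: all 2 connected graphs}; this is immediate, since $2t \geq 2$ for every $t \geq 1$, so each such even clique is a $K_t$ with $t \geq 2$ and is therefore not $\VCE$. With both the bipartite and the even-clique alternatives removed, the only surviving possibility is that $G$ is factor-critical, which combined with 2-connectivity yields the claim.

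There is essentially no genuine obstacle in this corollary: the substantive work has already been carried out in the three propositions and in the cited Theorem \ref{Thm: all 2 connected graphs}, and the corollary is a purely logical assembly of these facts. The only point requiring a moment of care is the bookkeeping between the two differently-parametrized clique families (the $K_{2t}$, $t\geq 1$, of the trichotomy versus the $K_t$, $t\geq 2$, of Proposition \ref{prop:K2t-is-not-VCE}), which I would flag explicitly to make the deduction airtight.
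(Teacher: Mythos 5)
Your argument is correct and is exactly the deduction the paper intends: the corollary is stated as an immediate consequence of Theorem \ref{Thm: all 2 connected graphs} combined with Propositions \ref{prop:VCE-graphs are 2-connected}, \ref{prop: there is no bip VCE graph} and \ref{prop:K2t-is-not-VCE}, which is precisely the assembly you carry out. Your extra remark reconciling the $K_{2t}$, $t\geq 1$, of the trichotomy with the $K_t$, $t\geq 2$, of Proposition \ref{prop:K2t-is-not-VCE} is a harmless and correct clarification.
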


 So, the following result provides a characterization of all VCE-graphs.

\begin{thm}\label{thm:main-vce}
Let $G$ be a 2-connected graph with $2r+1$ vertices. Then $G$ is $\VCE$ if and only if $G$ is a $(K_{2r},K_{r,r})$-free $\EFC$-graph.
\end{thm}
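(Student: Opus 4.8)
The plan is to prove the biconditional in Theorem~\ref{thm:main-vce} by exploiting Corollary~\ref{cor: def EFC} together with the notion of strong and weak vertices, reducing the whole statement to a clean criterion about isolating matchings. Since we already know from the preceding corollary that every $\VCE$-graph is $2$-connected factor-critical, the forward direction amounts to showing that a $2$-connected $\EFC$-graph that is $\VCE$ cannot contain $K_{2r}$ or $K_{r,r}$; the backward direction amounts to showing that a $(K_{2r},K_{r,r})$-free $\EFC$-graph loses equimatchability under the removal of \emph{any} vertex.

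For the backward direction, let $G$ be a $2$-connected $(K_{2r},K_{r,r})$-free $\EFC$-graph with $2r+1$ vertices, and fix an arbitrary vertex $v$. First I would observe that in a factor-critical equimatchable graph every vertex is weak: since $G-v$ has a perfect matching, $\nu(G-v)=r=\nu(G)$, so by Remark~\ref{rem: strong cut vertex} no vertex is strong. By Proposition~\ref{prop: when $G-v$ is equim.}, $G-v$ is equimatchable if and only if all vertices of $N(v)$ are strong in $G-v$; equivalently, $G-v$ is \emph{non}-equimatchable precisely when some neighbor of $v$ is weak in $G-v$. So the task reduces to exhibiting, for each $v$, a neighbor $u$ of $v$ and a maximal matching of $G-v$ that leaves $u$ exposed. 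The natural tool is Lemma~\ref{lem: gen isolating matching}: take a minimal matching $M_v$ isolating $v$; then $H:=G\setminus(V(M_v)\cup\{v\})$ is isomorphic to $K_{2n}$ or $K_{n,n}$. Because $G$ is $(K_{2r},K_{r,r})$-free, $H$ is a \emph{proper} such subgraph, so it is not all of $G-v$ minus one vertex, and in particular $H$ together with $V(M_v)$ exhibits enough freedom to redistribute edges: one can choose a perfect matching of $H$ and of $G\setminus(V(M_v)\cup\{v\})$ so as to leave a vertex adjacent to $v$ exposed while keeping $v$ saturated through $M_v$, producing a maximal matching of $G$ that saturates $v$ but exposes a neighbor in $G-v$. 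This is exactly a witness that the neighbor is weak in $G-v$, hence $G-v$ is non-equimatchable, so $G$ is $\VCE$.

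For the forward direction, I would argue the contrapositive: suppose $G$ is a $2$-connected $\EFC$-graph containing an induced $K_{2r}$ or $K_{r,r}$, and show $G$ is not $\VCE$ by finding a vertex $v$ with $G-v$ equimatchable. The cleanest approach is again through Proposition~\ref{prop: when $G-v$ is equim.}: if $G$ itself is $K_{2r}$-free and $K_{r,r}$-free we are in the other case, so assume $G$ contains such a large homogeneous subgraph on $2r$ of its $2r+1$ vertices. Then there is a single remaining vertex $v$, and I expect to show that for this $v$ all neighbors are strong in $G-v\cong K_{2r}$ or $K_{r,r}$ (both randomly matchable by Lemma~\ref{lem:randomlymatchable}, hence equimatchable with a perfect matching), making $G-v$ equimatchable and contradicting $\VCE$. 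The key is that in $K_{2r}$ or $K_{r,r}$ every vertex is saturated by every maximal matching, so every vertex is strong there.

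The main obstacle I anticipate is the combinatorial bookkeeping in the backward direction: one must carefully use the $(K_{2r},K_{r,r})$-freeness to guarantee that the isolating-matching structure from Lemma~\ref{lem: gen isolating matching} actually leaves room to expose a \emph{neighbor} of $v$ rather than merely some far-away vertex, and to verify maximality of the constructed matching in $G-v$. The delicate point is translating ``$H$ is a proper complete or complete-bipartite subgraph'' into a concrete edge-swap that frees up a vertex of $N(v)$ while preserving maximality; handling the $K_{n,n}$ case and the $K_{2n}$ case may require slightly different swaps, and one must ensure the exposed vertex is genuinely unsaturated and has no augmenting possibility. Once this exposure lemma is in place, both directions follow from the strong/weak dichotomy of Proposition~\ref{prop: when $G-v$ is equim.} essentially mechanically.
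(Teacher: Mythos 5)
Your forward direction is sound and coincides with the paper's: an induced $K_{2r}$ or $K_{r,r}$ in a graph on $2r+1$ vertices spans all but one vertex $v$, and $G-v$ is then connected randomly matchable (Lemma \ref{lem:randomlymatchable}), hence equimatchable, so $G$ is not $\VCE$. The problem is the backward direction. The step you yourself flag as the ``main obstacle'' --- that $(K_{2r},K_{r,r})$-freeness lets you ``redistribute edges'' around the randomly matchable remainder $H=G\setminus(V(M_v)\cup\{v\})$ of Lemma \ref{lem: gen isolating matching} so as to expose a neighbor of $v$ --- is not an argument but a restatement of what has to be proved, and nothing you write indicates how freeness of the whole graph $G$ controls the local structure around $V(M_v)$. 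Note moreover that $H$ contains no neighbor of $v$ at all (a matching isolating $v$ must saturate $N(v)$), so the vertex you want to expose must come from $V(M_v)$, not from $H$, and the observation that $H$ is a ``proper'' complete or complete bipartite subgraph gives you no leverage there. As written, the backward direction is a genuine gap.

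What you are missing is a one-line observation that makes the entire construction unnecessary: since $G$ is factor-critical, $G-v$ has a perfect matching; an equimatchable graph with a perfect matching is randomly matchable (every maximal matching then has size $\nu(G-v)$, i.e.\ is perfect); and since $G$ is $2$-connected, $G-v$ is connected, so Lemma \ref{lem:randomlymatchable} forces $G-v\cong K_{2r}$ or $K_{r,r}$, contradicting the freeness hypothesis. This is exactly the paper's proof of the backward direction. Your strong/weak reduction via Proposition \ref{prop: when $G-v$ is equim.} is correct but superfluous here: it converts a two-line contradiction into an explicit matching construction that you then cannot complete.
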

\begin{proof}
Let $G$ be a $\VCE$-graph with $2r+1$ vertices, then it is 2-connected by Proposition \ref{prop:VCE-graphs are 2-connected}. We claim that $G$ is $(K_{2r},K_{r,r})$-free, since otherwise there exists a vertex $v \in V(G)$ such that $G-v$ is isomorphic to a connected randomly matchable graph. In such a case $G-v$ is equimatchable, a contradiction with the vertex critically of $G$. 

We now suppose the converse. Let $G$ be a $(K_{2r},K_{r,r})$-free EFC-graph. Assume for a contradiction that there is a vertex $v \in V(G)$ such that $G-v$ is equimatchable. Since $G$ is  factor-critical, the graph $G-v$ has a perfect matching.  It follows that $G-v$ is a connected randomly matchable graph which is either $K_{2r}$ or $K_{r,r}$ by Lemma \ref{lem:randomlymatchable}, contradicting to our assumption. Therefore $G$ is $\VCE$.   
\end{proof}

Having obtained a characterization of VCE-graphs as a subclass of EFC-graphs, let us now investigate the difference of EFC-graphs from VCE-graphs. This will allow us to complete the containment relationships between various subclasses of equimatchable graphs as depicted in  Figure \ref{fig: ECE VCE} of Section \ref{sec:compare}.

\begin{thm}\cite{Favaron}\label{thm:vce-ece}
$G$ is an EFC-graph with a cut-vertex $v$ if and only if every connected component $C_i$ of $G-v$ is isomorphic to $K_{r,r}$ or to $K_{2t}$ for some integers $r,t\geq 1$ and where $v$ is adjacent to at least two adjacent vertices of each $C_i$. 
\end{thm}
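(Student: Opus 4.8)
I want to characterize exactly when an EFC-graph $G$ has a cut-vertex $v$, in terms of the structure of the connected components of $G-v$. The strategy is to apply the earlier structural lemmas on equimatchable graphs to each component of $G-v$, and then to combine the factor-critical hypothesis on $G$ with the adjacency structure at $v$.

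\smallskip

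\noindent\textbf{Forward direction.} Suppose $G$ is EFC with cut-vertex $v$. By Lemma~\ref{lem:cut vertex equim}, each connected component $C_i$ of $G-v$ is equimatchable. The key observation is that, since $G$ is factor-critical, $G-v$ has a perfect matching; hence \emph{each} component $C_i$ has a perfect matching and is therefore an equimatchable graph with a perfect matching, i.e.\ a randomly matchable graph. By Lemma~\ref{lem:randomlymatchable}, each $C_i$ is isomorphic to $K_{r,r}$ or $K_{2t}$ for some $r,t\geq 1$. It remains to establish the adjacency requirement, namely that $v$ has at least two \emph{adjacent} neighbors in each $C_i$. First, since $v$ is a cut-vertex (so each $C_i$ genuinely needs $v$ to stay connected to the rest, and in particular $G$ is connected), $v$ must have at least one neighbor in each $C_i$. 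To force \emph{two adjacent} neighbors, I would argue by contradiction using factor-criticality: I would pick a vertex $w$ in some $C_j$ ($j\neq i$), take a near-perfect matching of $G-w$ guaranteed by factor-criticality, and examine how it must saturate $v$ together with $C_i$. If $v$ had at most one neighbor in $C_i$, or its neighbors in $C_i$ were pairwise non-adjacent, I would build a maximal matching of $G$ that either leaves two vertices of $C_i$ exposed or else fails to extend, contradicting Corollary~\ref{cor: def EFC} (every maximal matching of an EFC-graph leaves exactly one vertex exposed). The precise counting here—matching up parities across the components—is the step I expect to be the main obstacle.

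\smallskip

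\noindent\textbf{Converse direction.} Suppose each component $C_i$ of $G-v$ is $K_{r,r}$ or $K_{2t}$ with $v$ adjacent to at least two adjacent vertices $a_i,b_i$ of each $C_i$. I need to show $G$ is an EFC-graph. Factor-criticality is the easier half: for any target vertex $u$, I would match $v$ into the component containing $u$ (using the perfect matchability of $K_{r,r}$ and $K_{2t}$, one shows $C_i - u$ together with the edge from $v$ admits a perfect matching), while perfectly matching every other component internally; this produces a perfect matching of $G-u$. For equimatchability, I would invoke Lemma~\ref{lem: defn equim}: it suffices to show there is no independent set $I$ of size $3$ with $G\setminus I$ having a perfect matching. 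The presence of the adjacent pair $a_i,b_i$ in each component is exactly what blocks the existence of a bad exposed configuration: any maximal matching must saturate $v$ and, by the randomly-matchable structure of each $C_i$, can leave at most one vertex exposed in total. I would verify this by a short case analysis according to whether the exposed vertex lies in the component matched to $v$ or elsewhere, using that removing a single vertex from $K_{r,r}$ or $K_{2t}$ destroys the perfect matching and that the pair $a_i,b_i$ lets $v$ be absorbed without stranding a vertex.

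\smallskip

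\noindent The crux in both directions is the interplay between the global parity constraint imposed by factor-criticality and the local randomly-matchable structure of each component; once the adjacency condition on $v$ is correctly pinned down, the rest reduces to routine matching-extension arguments.
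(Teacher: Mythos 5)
The paper itself gives no proof of this theorem: it is imported from \cite{Favaron} as a black box, so there is no internal argument to compare yours against; I assess the proposal on its own. Most of your architecture is sound. In the forward direction, the chain ``each component $C_i$ of $G-v$ is equimatchable by Lemma~\ref{lem:cut vertex equim}, inherits a perfect matching from the one of $G-v$ supplied by factor-criticality, is therefore randomly matchable, and is thus $K_{r,r}$ or $K_{2t}$ by Lemma~\ref{lem:randomlymatchable}'' is correct, and the converse (factor-criticality by matching $v$ into the component containing the deleted vertex, equimatchability via Lemma~\ref{lem: defn equim} and a parity analysis of how an independent triple meets the components) completes without difficulty. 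The one genuine gap is the step you yourself flag as the obstacle: forcing $v$ to have two \emph{adjacent} neighbours in each $C_i$. Your stated plan for that step would fail.

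Concretely, you propose to delete a vertex $w$ of some component $C_j$ with $j\neq i$ and read off information about $N(v)\cap C_i$ from a perfect matching $M$ of $G-w$. This cannot work: every component $C_k$ with $k\neq j$ has even order and its only neighbour outside itself is $v$, so if $M$ matched $v$ into such a $C_k$ the remainder of $C_k$ would have odd order and could not be matched internally. Hence $M$ is forced to match $v$ into $C_j$ and to match each other component, in particular $C_i$, entirely within itself; it carries no information about which vertices of $C_i$ see $v$. The repair is to delete a vertex $u$ of $C_i$ itself: parity then forces any perfect matching of $G-u$ to contain an edge $vw$ with $w\in C_i\setminus\{u\}$ and to match $C_i\setminus\{u,w\}$ perfectly. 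If $C_i\cong K_{2t}$, taking $u\in N(v)\cap C_i$ (nonempty since $G$ is connected) yields a second neighbour $w$, and $u,w$ are adjacent. If $C_i\cong K_{r,r}$, then $C_i\setminus\{u,w\}$ has a perfect matching only when $u$ and $w$ lie in opposite parts, so running the argument once with $u$ in each part shows that $v$ has a neighbour in each part, and any two such neighbours are adjacent. Note that this step uses only factor-criticality, not equimatchability, so the appeal to Corollary~\ref{cor: def EFC} is not needed there. With this substitution your proposal completes to a correct proof.
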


Theorem \ref{thm:main-vce} together with Propositions \ref{prop:VCE-graphs are 2-connected} and Theorem \ref{thm:vce-ece} allow us to describe all $\EFC$-graphs that are not $\VCE$ as follows:

\begin{prop}\label{prop:EFC-notVCE}
Let $G$ be an EFC-graph which is not VCE. Then there exists a vertex $v\in V(G)$ such that each connected component $C_i$ of $G-v$ is a $K_{r,r}$ or a $K_{2t}$ for some integers $r,t\geq 1$ and where $v$ is adjacent to at least two adjacent vertices of each $C_i$. 
\end{prop}
\begin{proof}
If $G$ has a cut-vertex, then the result follows from Theorem \ref{thm:vce-ece}. Otherwise $G$ is a 2-connected graph with $2r+1$ vertices and contains one of $K_{2r}$ or $K_{r,r}$ by Theorem \ref{thm:main-vce}. Since $G$ is 2-connected, $v$ has at least two neighbors $x$ and $y$ in $G-v$; moreover $xy\in E$. Indeed, if $G-v$  is $K_{2r}$ then clearly $xy\in E$; if $G-v$ is $K_{r,r}$ then $x$ and $y$ belong to the same $(r)$-stable set of $K_{r,r}$ and $v$ has no neighbor in the other $(r)$-stable set; then $G-x$ has no perfect matching, contradicting that $G$ is factor-critical. 
\end{proof}

It follows from the above discussion that $\VCE$-graphs are almost equivalent to the class of factor-critical equimatchable graphs; indeed this is the most intriguing subclass of equimatchable graphs  as  the structure of the remaining equimatchable graphs are rather well-known \cite{Plummer, ECEbip}. By Proposition \ref{prop:EFC-notVCE}, the only factor-critical equimatchable graphs that are not $\VCE$ are those graphs $G$ admitting a vertex $v$ such that $G- v$ leaves a graph whose connected components are $K_{r,r}$ or $K_{2t}$  for some integers $r$ and $t$  and where $v$ is adjacent to at least two adjacent vertices of each component of $G-v$.

 We now start the investigation of $\ECE$-graphs. It is worth noting that while comparing subclasses of equimatchable graphs in Section \ref{sec:compare}, Proposition \ref{prop:VCE-graphs are 2-connected} and Theorem \ref{thm:main-vce} will allow us to derive (in Corollary \ref{cor:ece-vce}) that all factor-critical $\ECE$-graphs are $\VCE$.

%\begin{prop} \label{efc}
%Let $G$ be a connected EFC-graph with $2r+1$ vertices. Then $G$ is VCE-graph if and only if $G$ has no randomly matchable graph with $2r$ vertices.
%\end{prop}
%\begin{proof}
%Given a VCE-graph $G$ with $2r+1$ vertices. We claim that it has no randomly matchable graph with $2r$ vertices, since otherwise there exists a vertex $v \in V(G)$ such that $G-v$ is isomorphic to a randomly matchable graph. So $G-v$ is equimatchable, it is a contradiction with vertex critically of $G$. 

%Let suppose the converse, then $G$ has no randomly matchable graph with $2r$ vertices. For a contradiction, assume there is a vertex $v \in V(G)$ such that $G-v$ is equimatchable. Since $G$ is also factor-critical, $G-v$ has a perfect matching. It follows that $G-v$ should be randomly matchable graph. It gives contradiction with assuming. Therefore $G$ is VCE-graph.
%\end{proof}

%%%%%%%%%%%%%%%%%%%%%%%%%%%%%%%%%%%%%%%%%%%%%%%%%%%%%%%%%%%%%%%%%%%%%%%%%%%
\section{Edge-critical Equimatchable graphs}\label{sec:ECE}
%%%%%%%%%%%%%%%%%%%%%%%%%%%%%%%%%%%%%%%%%%%%%%%%%%%%%%%%%%%%%%%%%%%%%%%%%%%

In this section, we investigate ECE-graphs.  Our preliminary results in Section \ref{sec:premECE} show that apart from two simple cases, namely bipartite ECE-graphs and complete graphs of even order, all ECE-graphs are (2-connected) factor-critical. Then, we characterize factor-critical ECE-graphs with connectivity 2 in Section \ref{sec:fcECE} (Theorem \ref{thm:chrc of ECE conn 2}). 

%%%%%%%%%%%%%%%%%%%%
\subsection{Preliminaries on ECE-graphs} \label{sec:premECE}~~\medskip
%%%%%%%%%%%%%%%%%%%

We start with a lemma that will be frequently used in our proofs.

\begin{lem}\label{lem: critical edge iff there is a matching}
Let $G$ be an equimatchable graph. Then $uv \in E(G)$ is critical if and only if there is a matching of $G$ containing $uv$ and saturating $N(\{u,v\})$. 
\end{lem}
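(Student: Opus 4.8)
The plan is to compare the maximal matchings of $G$ with those of $G\setminus uv$ and to show that criticality of $uv$ is equivalent to the existence of a maximal matching of $G\setminus uv$ that leaves both $u$ and $v$ exposed. First I would record two easy facts. If a maximal matching $M'$ of $G\setminus uv$ saturates at least one of $u,v$, then $uv$ cannot be added to it, so $M'$ is already maximal in $G$; hence $|M'|=\nu(G)$ because $G$ is equimatchable. Conversely, if a maximal matching $M'$ of $G\setminus uv$ leaves both $u$ and $v$ exposed, then $M'\cup\{uv\}$ is a matching of $G$, and it is maximal there (every other edge is already blocked by $M'$), so $|M'\cup\{uv\}|=\nu(G)$ and thus $|M'|=\nu(G)-1$. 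Consequently every maximal matching of $G\setminus uv$ has size either $\nu(G)$ or $\nu(G)-1$, and $G\setminus uv$ is non-equimatchable (i.e.\ $uv$ is critical) precisely when both sizes are attained.

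Next I would translate ``both sizes are attained'' into the matching condition in the statement. For the direction where $uv$ is critical, I take a maximal matching $M'$ of $G\setminus uv$ of size $\nu(G)-1$; by the dichotomy above it must leave $u$ and $v$ exposed. Maximality of $M'$ in $G\setminus uv$ forces every neighbour of $u$ other than $v$, and every neighbour of $v$ other than $u$, to be saturated by $M'$ (otherwise the corresponding edge could be added). Since $uv\in E$ we have $N(\{u,v\})=N(u)\cup N(v)$ and this set contains $u$ and $v$; hence $M:=M'\cup\{uv\}$ contains $uv$ and saturates all of $N(\{u,v\})$, which is exactly the desired matching. For the converse, given a matching $M$ containing $uv$ and saturating $N(\{u,v\})$, I first extend $M$ to a maximal matching $\bar M$ of $G$; extending only enlarges the set of saturated vertices, so $\bar M$ still contains $uv$ and saturates $N(\{u,v\})$. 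Deleting $uv$ yields $M_0=\bar M\setminus\{uv\}$, a matching of $G\setminus uv$ leaving $u,v$ exposed; using that $\bar M$ is maximal and that every neighbour of $u$ and of $v$ (other than $u,v$ themselves) is saturated by $M_0$, one checks that no edge of $G\setminus uv$ can be added, so $M_0$ is maximal of size $\nu(G)-1$.

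To finish the converse I still need a maximal matching of $G\setminus uv$ of size $\nu(G)$, so that the two sizes $\nu(G)-1$ and $\nu(G)$ witness non-equimatchability. Here is the one genuinely delicate point: such a large matching exists as soon as at least one of $u,v$ has a neighbour outside $\{u,v\}$, say $u$ is adjacent to some $w\neq v$; then any maximal extension of $\{uw\}$ saturates $u$, hence avoids $uv$, hence is a maximal matching of both $G$ and $G\setminus uv$ of size $\nu(G)$. The only obstruction is when $u$ and $v$ have no other neighbours, i.e.\ when $\{u,v\}$ induces a connected component isomorphic to $K_2$; in that case removing $uv$ keeps the graph equimatchable and the equivalence genuinely fails. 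Thus the main obstacle is this degenerate edge, which must be excluded (as is done throughout for graphs with no component isomorphic to an edge); away from it, the dichotomy on matching sizes closes both directions.
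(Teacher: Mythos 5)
Your proof is correct and follows essentially the same route as the paper: both arguments rest on the observation that a maximal matching of $G\setminus uv$ either saturates one of $u,v$ (hence is maximal in $G$ and has size $\nu(G)$) or leaves both exposed (hence saturates $N(\{u,v\})\setminus\{u,v\}$ and, after adding $uv$, has size $\nu(G)$), and both directions are extracted from this dichotomy exactly as in the paper. Your two extra precautions are in fact small improvements on the published argument: the paper asserts directly that $M\setminus\{uv\}$ is maximal in $G\setminus uv$ without first extending $M$ to a maximal matching of $G$, and it produces the second maximal matching by extending an edge $vw$ with $w\in N(v)\setminus\{u\}$ without remarking that such a $w$ (or a neighbour of $u$ other than $v$) exists only when $\{u,v\}$ is not a $K_2$-component --- the degenerate case you correctly identify as the one place where the stated equivalence genuinely fails and which the paper excludes only implicitly.
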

\begin{proof}
Assume that $uv$ is a critical edge in $G$. Then $G \sm uv$ admits two maximal matchings $M_1$ and $M_2$ with $|M_1|<|M_2|$. Note that  $M_1$ leaves both $u$ and $v$ exposed in $G \sm uv$ since otherwise $M_1$ would be a maximal matching of $G$, contradicting that $G$ is equimatchable.
This implies that $M_1$ saturates all vertices in $N_{G\sm uv}(\{u,v\})$. Hence, $M_1 \cup \{uv\}$ is a maximal matching of $G$ as desired.

We now suppose the converse. If there is such a matching $M$, then $M\sm\{uv\}$ is a maximal matching in $G\sm uv$. However, there is also another maximal matching $M'$ in $G\sm uv$ which can be obtained by extending the edge $vw$ for some $w\in N(v)$ with $w \neq u$. Clearly, $M'$ has size $\nu(G)$ since it is also a maximal matching of $G$. Then $G\sm uv$ is not equimatchable, implying that $uv$ is a critical-edge in $G$.
\end{proof}

By Lemma  \ref{lem: critical edge iff there is a matching}, if a graph $G$ is factor-critical ECE, then for every $uv \in E(G)$, there exists a matching of $G$ containing $uv$ and saturating $N(\{u,v\})$. However such a matching does not exist if $N(\{u,v\})= V(G)$ since $|G|$ is odd. It follows that:

\begin{cor}\label{cor:no-dom-edge}
If $G$ is a connected factor-critical $\ECE$-graph, then there is no edge $uv\in E(G)$ such that $N(\{u,v\})= V(G)$.
%$\{u,v\}$ is not a dominating set .
\end{cor}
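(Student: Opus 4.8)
The goal is Corollary~\ref{cor:no-dom-edge}: in a connected factor-critical $\ECE$-graph $G$, no edge $uv$ can satisfy $N(\{u,v\}) = V(G)$. The plan is to argue by contradiction, combining the edge-criticality of $G$ (via Lemma~\ref{lem: critical edge iff there is a matching}) with the parity constraint forced by factor-criticality.

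First I would recall the structural input. Since $G$ is factor-critical with $n = |V(G)|$ vertices, $n$ is odd, and every maximal matching has size $(n-1)/2$, leaving exactly one vertex exposed (Corollary~\ref{cor: def EFC}). The key tool is Lemma~\ref{lem: critical edge iff there is a matching}: because $G$ is $\ECE$, the edge $uv$ is critical, so there exists a matching $M$ of $G$ that contains $uv$ and saturates every vertex of $N(\{u,v\})$.

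Now suppose for contradiction that $N(\{u,v\}) = V(G)$. Then the matching $M$ guaranteed by Lemma~\ref{lem: critical edge iff there is a matching} saturates $N(\{u,v\}) = V(G)$, i.e.\ $M$ is a perfect matching of $G$. But a graph with a perfect matching has an even number of vertices, contradicting that $n$ is odd. This is exactly the parity obstruction flagged in the paragraph preceding the corollary: a matching saturating all of $V(G)$ would be perfect, yet $|G|$ is odd.

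There is essentially no hard step here; the only point requiring a little care is confirming that $M$ saturates $V(G)$ in full rather than merely $V(G) \setminus \{u,v\}$. Since $uv \in M$, both $u$ and $v$ are already saturated by $M$, and $M$ saturates $N(\{u,v\})$; if $N(\{u,v\}) = V(G)$ then in particular all remaining vertices are saturated, so $M$ is perfect. The contradiction with the odd order of $G$ then completes the proof, and the statement follows immediately.
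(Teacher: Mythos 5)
Your argument is correct and is essentially the paper's own: the authors likewise invoke Lemma~\ref{lem: critical edge iff there is a matching} to obtain a matching containing $uv$ and saturating $N(\{u,v\})$, and then observe that such a matching would be perfect when $N(\{u,v\})=V(G)$, which is impossible since a factor-critical graph has odd order. Your extra remark that $u$ and $v$ are themselves saturated (because $uv\in M$) just makes explicit a detail the paper leaves implicit.
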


%It follows from Lemma \ref{lem: critical edge iff there is a matching} that an equimatchable graph $G$ is edge-critical  if and only if there is a matching containing $uv$ and saturating $N(\{u,v\})$ for every $uv \in E(G)$. 

%{\color{red} Asagidaki Proposition hic kullanilmamis. Katkisini goremiyorum, kirmizi metnin sonuna kadar kaldirmayi oneriyorum. 
%
%It is worth mentioning that if  $G$ is an equimatchable graph and $uv$ is a critical edge in $G$, then $G \sm uv$ admits two maximal matchings $M_1$ and $M_2$ with $|M_1|<|M_2|$. Since $G$ is equimatchable, it follows that  $\vert M_1 \vert =\nu(G)-1$ and $\vert M_2 \vert = \nu(G)$.
%
%\begin{prop}\label{prop:equim-unchanged matching number}
%If $G$ is an equimatchable graph, then $\nu(G)=\nu(G-e)$ for every edge $e\in E(G)$.
%\end{prop}
%
%
%We note that ECE-graphs can be obtained from any equimatchable graph by recursively removing non-critical edges. } 

The following is a direct consequence of Lemma \ref{lem: critical edge iff there is a matching}, since any randomly matchable graph has a matching containing $uv$ and saturating $N(\{u,v\})$ for every edge $uv$.

\begin{rem}
Randomly matchable graphs are edge-critical equimatchable.
\end{rem}

In what follows, we shall prove that $\ECE$-graphs have no cut vertex. 

\begin{lem}\label{lem:ECE 2 connected}
$\ECE$-graphs are 2-connected.
\end{lem}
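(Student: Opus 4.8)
The plan is to argue by contradiction: assuming $G$ is a connected $\ECE$-graph with a cut vertex $v$, I will exhibit a non-critical edge, contradicting the definition of $\ECE$. (Since a disjoint union is $\ECE$ exactly when each component is, ``2-connected'' here is understood for connected $G$, so the substantive point is the absence of a cut vertex.) Let $C_1,\dots,C_k$ with $k\ge 2$ be the components of $G-v$; by Lemma~\ref{lem:cut vertex equim} each $C_i$ is equimatchable, and I set $U_i=N(v)\cap V(C_i)\neq\emptyset$. The starting tool is a precise description, via Lemma~\ref{lem: critical edge iff there is a matching}, of when an edge incident to $v$ is critical. For $u\in U_i$, a matching containing $vu$ uses no other edge at $v$, so it splits into $\{vu\}$ together with matchings of the individual components. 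Saturating $N(\{v,u\})$ therefore forces (a)~every other component $C_j$, $j\neq i$, to contain a matching saturating $U_j$, and (b)~$C_i$ to contain a matching avoiding $u$ that saturates $(U_i\sm\{u\})\cup N_{C_i}(u)$. Call $C_j$ \emph{saturable} if it has a matching saturating $U_j$; then $vu$ is critical if and only if every $C_j$ with $j\neq i$ is saturable and condition (b) holds.

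Next I split on the number $t$ of non-saturable components. If $t\ge 2$, then for any $u\in U_i$ some non-saturable $C_j$ with $j\neq i$ survives, so condition (a) fails and \emph{every} edge at $v$ is non-critical. If $t=1$, say only $C_1$ is non-saturable, then for any edge $vw$ with $w\in U_j$ and $j\ge 2$ the non-saturable $C_1$ lies among the ``other'' components, so again (a) fails and $vw$ is non-critical; such an edge exists because $k\ge2$. In either case a non-critical edge contradicts that $G$ is $\ECE$.

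It remains to treat $t=0$, where all components are saturable; this is the crux. Here I may assume every edge at $v$ is critical (otherwise we are done) and instead derive a contradiction with the equimatchability of $G$ itself. For each $j$, extend a matching of $C_j$ saturating $U_j$ to a maximal matching $\hat M_j$ of $C_j$; since $C_j$ is equimatchable, $|\hat M_j|=\nu(C_j)$. Their union $M_0=\bigcup_j\hat M_j$ saturates $N(v)$ and is therefore a maximal matching of $G$ leaving $v$ exposed, of size $\sum_j\nu(C_j)$. On the other hand, pick any $u\in U_1$; criticality of $vu$ provides, via condition (b), a matching of $C_1$ avoiding $u$ and saturating $N_{C_1}(u)\cup(U_1\sm\{u\})$, which extends to a maximal matching $M_1^{*}$ of $C_1$ that still leaves $u$ exposed (its neighbours being already saturated), so $|M_1^{*}|=\nu(C_1)$. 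Then $M_1^{*}\cup\{vu\}\cup\bigcup_{j\ge2}\hat M_j$ is a maximal matching of $G$ of size $1+\sum_j\nu(C_j)$. These two maximal matchings of $G$ differ in size by one, contradicting equimatchability. Hence $G$ has no cut vertex and is 2-connected. I expect the verification that both constructed matchings are genuinely \emph{maximal} in $G$ (in particular that $M_1^{*}$ saturates $U_1\sm\{u\}$, so that no exposed vertex of $C_1$ is adjacent to $v$) to be the most delicate bookkeeping, while the conceptual heart is the size discrepancy produced in the $t=0$ case.
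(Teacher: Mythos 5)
Your proof is correct and follows essentially the same route as the paper's: both decompose matchings across the components of $G-v$, invoke Lemma~\ref{lem: critical edge iff there is a matching} to read off what criticality of edges at the cut vertex forces, and reach a contradiction by producing two maximal matchings of $G$ (one isolating $v$, one using an edge at $v$) whose sizes differ by one. Your preliminary case split on the number of non-saturable components is only an organizational variant: the paper obtains the same conclusion (all components admit matchings saturating $N(v)$ there) directly from the criticality of two edges $zw_1$, $zw_2$ into distinct components, and then runs your $t=0$ argument.
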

\begin{proof}
Assume that $G$ is an $\ECE$-graph, and has a cut-vertex $z$. Let $H_1,H_2,\ldots, H_k$ be the components of $G-z$ for $k\geq 2$. By Lemma \ref{lem:cut vertex equim}, each $H_i$ is equimatchable. Besides, since $G$ is ECE-graph, there exists a matching containing $uv$ and saturating $N(\{u,v\})$ for every $uv \in E(G)$ by Lemma \ref{lem: critical edge iff there is a matching}. Let us pick a vertex $w_i $ from each component $H_i$ such that $w_i\in N(z)\cap H_i$. Then, for the edge $zw_1$,  there exists a maximal matching $M$ in $G$ containing $zw_1$ and saturating $N(\{z,w_1\})$.  Let $M\cap E(H_i)=M_i$ for $i\in  [k] $. 
Observe that $M_1$ saturates all vertices in $N_{H_1}(w_1)$. Also,  for each $i\geq 2$, the matching $M_i$ saturates all vertices in $N_{H_i}(z)$. 
We now consider the edge $zw_2$, similarly as above; there exists a maximal matching $L$ in $G$ containing $zw_2$ and saturating $N(\{z,w_2\})$. It follows that there exists a maximal matching $T=L\cap E(H_1)$ in $H_1$ such that $T$ saturates all vertices in $N_{H_1}(z)$.
In this manner, we obtain a maximal matching $T\cup M_2\cup \ldots \cup  M_k$ in $G$ isolating $z$, and so we have $\nu(G)=|T|+|M_2|+\ldots+|M_k|$ since $G$ is equimatchable. This also implies that $\nu(G)=\sum \nu(H_i)$. On the other hand, observe that $M_1$ is a maximal matching in $H_1$ since $M_1$ saturates all vertices in $N_{H_1}(w_1)$. Moreover the matchings $M_1$ and $T$ are of the same size since $H_1$ is equimatchable. Thus $M_1\cup M_2\cup \ldots \cup  M_k$ must be of size $\nu(G)$. However, this contradicts that we have the maximal matching $M=M_1\cup M_2\cup \ldots \cup  M_k\cup \{zw_1\}$ in $G$. Hence $G$ has no cut-vertex.
\end{proof}

The following is an immediate consequence of Theorem \ref{Thm: all 2 connected graphs} together with Lemma \ref{lem:ECE 2 connected}.
\begin{thm}\label{thm:ece-categories}
$\ECE$-graphs are either factor-critical or bipartite or $K_{2t}$ for some $t\geq 1$.
\end{thm}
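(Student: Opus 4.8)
The plan is to treat this as an immediate corollary obtained by chaining the two facts established just before it. First I would note that edge-criticality is a property imposed \emph{on top of} equimatchability: by the very definition of an $\ECE$-graph, any such $G$ is in particular equimatchable. Hence every $\ECE$-graph belongs to the class of equimatchable graphs, with no further argument needed.

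Next I would invoke Lemma \ref{lem:ECE 2 connected}, which has just shown that $\ECE$-graphs contain no cut vertex and are therefore $2$-connected (in particular connected). Putting these two observations together, an arbitrary $\ECE$-graph $G$ is a $2$-connected equimatchable graph, so it falls squarely under the hypotheses of the structural dichotomy. Applying Theorem \ref{Thm: all 2 connected graphs} to $G$ then yields directly that $G$ is factor-critical, bipartite, or isomorphic to $K_{2t}$ for some $t\geq 1$, which is precisely the asserted trichotomy.

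There is essentially no obstacle at this stage, and I would not expect any hidden difficulty: all of the substantive work has already been carried out, either in the cited Theorem \ref{Thm: all 2 connected graphs} (Plummer's classification of $2$-connected equimatchable graphs) or, more pertinently, in the matching-exchange argument of Lemma \ref{lem:ECE 2 connected} that rules out cut vertices. The only point worth a moment's care is that $2$-connectivity as delivered by Lemma \ref{lem:ECE 2 connected} already incorporates connectedness, so the structural theorem applies without any separate reduction to the connected case. Accordingly, the proof is a one-line deduction combining these two results.
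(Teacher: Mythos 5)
Your proposal is correct and coincides with the paper's own derivation: the theorem is stated there as an immediate consequence of Lemma \ref{lem:ECE 2 connected} (ECE-graphs are $2$-connected) combined with Theorem \ref{Thm: all 2 connected graphs} (Plummer's trichotomy for $2$-connected equimatchable graphs). No gaps.
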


In view of Theorem \ref{thm:ece-categories}, we consider ECE-graphs under three disjoint categories: 2-connected factor-critical, 2-connected bipartite, and complete graphs of even order (which are randomly matchable thus ECE).

The characterization of bipartite ECE-graphs has been given in \cite{ECEbip} as follows. 

\begin{thm}\cite{ECEbip}\label{prop: charc of ECE bip graph}
A connected bipartite graph $G = (U \cup V,E)$ with $|U| \leq |V |$ except $K_2$ is an bipartite $\ECE$-graph if and only if for every $u \in U, |N(S)| \geq  |S|$ holds
for any subset $S \subseteq N(u)$ and the equality holds only for $S = N(u)$.
\end{thm}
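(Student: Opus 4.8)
The plan is to prove both implications by translating the two combinatorial conditions into statements about matchings and then applying the tools already available for bipartite equimatchable graphs: that every maximal matching saturates the smaller side $U$ (Corollary \ref{cor:bip-saturate-U}), and the critical-edge criterion (Lemma \ref{lem: critical edge iff there is a matching}). Throughout, for $u\in U$ and $S\subseteq N(u)$ we have $S\subseteq V$ and $N(S)\subseteq U$ with $u\in N(S)$, and I read the stated condition as ``$|N(S)|\ge |S|$ for every nonempty $S\subseteq N(u)$, with equality exactly when $S=N(u)$.'' A useful first observation is that, since every edge of $G$ joins $U$ to $V$ and $N(v)\subseteq U$ for $v\in V$, Corollary \ref{cor:bip-saturate-U} makes the requirement ``$M$ saturates $N(v)$'' automatic for any maximal matching; hence all the real content concerns saturating $N(u)\subseteq V$ while forcing a prescribed edge at $u$.

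For the forward implication (assuming $G$ is $\ECE$), I would first derive the Hall inequality from edge-criticality: if some $u\in U$ and $S\subseteq N(u)$ had $|N(S)|<|S|$, pick $v\in S$; criticality of $uv$ gives, by Lemma \ref{lem: critical edge iff there is a matching}, a matching saturating $N(u)\supseteq S$, i.e.\ an injection of $S$ into $N(S)$, which is impossible. Equimatchability and Lemma \ref{Lem: chr of equim bip graph} then supply some nonempty $S_0\subseteq N(u)$ with $|N(S_0)|\le |S_0|$, which combined with Hall forces $|N(S_0)|=|S_0|$, so a tight set exists. Finally I would rule out any proper tight set: if $\emptyset\ne S\subsetneq N(u)$ satisfied $|N(S)|=|S|$, choose $v'\in N(u)\sm S$ and apply Lemma \ref{lem: critical edge iff there is a matching} to $uv'$; the resulting matching contains $uv'$ and saturates $N(u)\supseteq S$, so it injects $S$ into $N(S)$ while $u\in N(S)$ is already used by $v'\notin S$, leaving only $|N(S)|-1=|S|-1$ slots for the $|S|$ elements of $S$, a contradiction. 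Since a tight set exists but none is proper, the unique tight set is $N(u)$, giving equality precisely at $S=N(u)$.

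For the converse, equimatchability is immediate: taking $S=N(u)$ gives $|N(N(u))|=|N(u)|$, so Lemma \ref{Lem: chr of equim bip graph} applies. To show every edge $uv$ is critical I would build the matching required by Lemma \ref{lem: critical edge iff there is a matching}. Since equality holds only at $N(u)$, every proper nonempty $T\subsetneq N(u)$ satisfies $|N(T)|>|T|$, hence $|N(T)\sm\{u\}|\ge |T|$; by Hall (Theorem \ref{thm:hall}) applied to the bipartite graph between $N(u)\sm\{v\}$ and $U\sm\{u\}$ there is a matching $M_1$ saturating $N(u)\sm\{v\}$ into $U\sm\{u\}$. Then $M_0=M_1\cup\{uv\}$ is a matching saturating all of $N(u)$ with $v$ matched to $u$. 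Extending $M_0$ to a maximal matching $M$, Corollary \ref{cor:bip-saturate-U} guarantees $M$ saturates $U\supseteq N(v)$, so $M$ contains $uv$ and saturates $N(u)\cup N(v)=N(\{u,v\})$; Lemma \ref{lem: critical edge iff there is a matching} then yields that $uv$ is critical.

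The step I expect to be most delicate is the ``forced-edge'' bookkeeping in both directions --- arranging that $u$ is matched to the prescribed partner and counting the remaining capacity in $N(S)\sm\{u\}$ or $U\sm\{u\}$ correctly --- together with the small-case hygiene needed to invoke Lemma \ref{lem: critical edge iff there is a matching}, whose converse requires a second neighbour of $v$. Here the hypothesis $G\ne K_2$ is essential: the condition forbids pendant vertices (a pendant $v\in V$ would make $\{v\}$ a proper tight subset of $N(u)$, forcing $N(u)=\{v\}$ and hence a $K_2$ component), so under the stated condition every vertex has degree at least $2$ and the lemma applies throughout.
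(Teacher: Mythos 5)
This statement is quoted in the paper from \cite{ECEbip} and no proof of it is given here, so there is no in-paper argument to compare against; I can only assess your proposal on its own terms, and it checks out. Both directions are sound. In the forward direction, the three steps (Hall's inequality from criticality of $uv$ for $v\in S$, existence of a tight set from Lemma \ref{Lem: chr of equim bip graph}, and exclusion of proper tight sets via the edge $uv'$ with $v'\in N(u)\setminus S$) are each correct; the key counting point --- that $u\in N(S)$ is already consumed by $v'\notin S$, leaving only $|N(S)|-1=|S|-1$ partners for $S$ --- is exactly what makes the ``only for $S=N(u)$'' clause come out, and you got it right. In the converse, the Hall verification for matching $N(u)\setminus\{v\}$ into $U\setminus\{u\}$ correctly uses that every proper nonempty $T\subsetneq N(u)$ satisfies $|N(T)|\ge |T|+1$, so deleting $u$ from $N(T)$ still leaves room; and the appeal to Corollary \ref{cor:bip-saturate-U} to saturate $N(v)\subseteq U$ for free is the right economy. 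You also correctly flagged the one genuine hygiene issue: the converse of Lemma \ref{lem: critical edge iff there is a matching} needs a second neighbour, and your observation that a pendant $v\in V$ would force $\{v\}$ to be a proper tight subset of $N(u)$ (hence $N(u)=\{v\}$ and a $K_2$ component, excluded by hypothesis) disposes of it cleanly. I see no gap.
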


It remains to enlighten the structure of factor-critical ECE-graphs. Recall that all factor-critical ECE-graphs are 2-connected by Lemma \ref{lem:ECE 2 connected}. In the next subsection, we provide a characterization of factor-critical ECE-graphs with connectivity 2.

%%%%%%%%%%%%%%%%%%%%%%%%%%%%%%%%%%%%%%%%%%%%%%%%%%%%%%%%%%%%%%%%%%%%%%%%%%%%%%%%%
\subsection{Factor-critical ECE-graphs with connectivity 2} \label{sec:fcECE}~~\medskip
%%%%%%%%%%%%%%%%%%%%%%%%%%%%%%%%%%%%%%%%%%%%%%%%%%%%%%%%%%%%%%%%%%%%%%%%%%%%%%%%%%%

The following result on factor-critical equimatchable graphs with connectivity 2 will guide us in this subsection.  

\begin{thm}\cite{Favaron} \label{thm: Favaron connectivity 2 }
Let $G$ be an $\EFC$-graph of order at least 5 and connectivity 2 and let $S=\{s_1,s_2\}$ be 2-vertex-cut of G. Suppose that $a_i$ and $b_i$ are distinct neighbours of $s_i$ in respectively $A$ and $B$ for $i=1,2$.  Then $G - S$ has precisely two components A and B such that 
\begin{itemize}
\item[$(i)$] B is one of the four graphs $K_{2p+1},K_{2p+1} \sm b_1b_2, \ K_{p,p+1}$ or $K_{p,p+1}+b_1b_2$ and in the two last cases $b_1$ and $b_2$ belong to the $(p+1)$-stable set of $K_{p,p+1}$.
\item[$(ii)$] $A- \{a_1,a_2\}$ is either $K_{2q-2}$ or $K_{q-1,q-1}$, and if $\vert B \vert > 1$, then A is either $K_{2q}$ or $K_{q,q}$ 
\end{itemize}
\end{thm}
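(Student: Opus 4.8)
The plan is to extract all structural information from a single device, which I will call \emph{parking}: given a maximal matching of one component of $G-S$, complete it to a maximal matching of $G$ by matching $s_1,s_2$ into the \emph{other} component(s), and then invoke Corollary~\ref{cor: def EFC}, which forces every maximal matching of $G$ to expose exactly one vertex. First I would record the preliminaries. Since the connectivity is exactly $2$ there is no cut vertex, so $G-s_i$ is connected for each $i$; as the only vertex able to join the components of $G-S$ inside $G-s_i$ is $s_{3-i}$, each of $s_1,s_2$ has a neighbour in every component of $G-S$. As $G$ is factor-critical, $|V(G)|$ is odd, so the components of $G-S$ have odd total order; applying factor-criticality to $G-s_1$ and $G-s_2$ (each has a perfect matching in which $s_{3-i}$ can be matched into at most one component, forcing every other component to have even order) shows that at most one component is odd, and the parity count then forces \emph{exactly} one odd component, which I call $B$.

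Next I would prove that the even components are randomly matchable and that there are only two components. Fix an even component $A$ and an arbitrary maximal matching $M_A$ of $A$. Assuming $|B|\geq 2$ (or, if $|B|=1$, that a second even component is available), I can park $s_1,s_2$ on two distinct vertices outside $A$ and then extend maximally; the vertices left exposed by $M_A$ keep all their neighbours saturated and hence remain exposed in $G$. By Corollary~\ref{cor: def EFC} at most one vertex of $G$ is exposed, and since $|A|$ is even the number of exposed vertices of $A$ is even, hence zero. Thus every maximal matching of $A$ is perfect, so $A$ is randomly matchable and $A\cong K_{2q}$ or $A\cong K_{q,q}$ by Lemma~\ref{lem:randomlymatchable}. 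To rule out a third component I would use Lemma~\ref{lem: defn equim}: with three components I pick one vertex from each (an automatically independent triple), delete them, route $s_1,s_2$ into the two newly-odd even pieces (possible since $s_i$ attaches to each near-complete piece), and choose the deleted vertex of $B$ so that $B$ minus it has a perfect matching; the resulting perfect matching of $G$ minus an independent triple contradicts equimatchability. Hence $G-S$ has exactly the two components $A$ (even) and $B$ (odd).

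For $B$ I would reverse the parking. Having shown $A\cong K_{2q}$ or $K_{q,q}$, I park $s_1,s_2$ into $A$ \emph{so that $A$ receives a perfect matching} — for $A\cong K_{q,q}$ this requires $s_1,s_2$ to meet opposite sides, which I would first justify by observing that if both met a single side then some $G-v$ would fail to have a perfect matching, contradicting factor-criticality. With $A$ fully saturated, the same exposed-vertex bookkeeping forces every maximal matching of $B$ to expose exactly one vertex, so $B$ is an equimatchable graph of odd order with matching number $(|B|-1)/2$. It then remains to identify such a $B$, attached to the rest of $G$ only through $b_1\in N(s_1)$ and $b_2\in N(s_2)$, as one of $K_{2p+1}$, $K_{2p+1}\sm b_1b_2$, $K_{p,p+1}$ or $K_{p,p+1}+b_1b_2$, and to pin down that $a_1,a_2$ lie on opposite sides when $A\cong K_{q,q}$ so that $A-\{a_1,a_2\}\cong K_{q-1,q-1}$ (while $A-\{a_1,a_2\}\cong K_{2q-2}$ when $A\cong K_{2q}$). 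Lemma~\ref{lem: gen isolating matching}, applied to a vertex deep inside $A$, is the natural tool here: it returns a $K_{2n}$ or $K_{n,n}$ sitting inside $B$ and thereby controls $B$ up to the single attachment edge $b_1b_2$.

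I expect this last step to be the main obstacle. Showing that every maximal matching of $B$ exposes exactly one vertex is routine, but upgrading this to the precise four-member list requires a genuine case analysis: separating the complete from the complete-bipartite behaviour of $B$, deciding in each case whether the edge $b_1b_2$ is present or absent (the origin of the $\sm b_1b_2$ and $+b_1b_2$ variants) by testing candidate bad matchings against equimatchability, and verifying that $b_1,b_2$ (resp.\ $a_1,a_2$) occupy the prescribed stable set. The genuinely separate degenerate case $|B|=1$, where $A$ need not be randomly matchable and only the weaker conclusion $A-\{a_1,a_2\}\cong K_{2q-2}$ or $K_{q-1,q-1}$ survives, must be handled by hand. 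None of these steps is conceptually deep, but keeping the bipartite/complete dichotomy and the two optional edges straight across all sub-cases is where the real work lies.
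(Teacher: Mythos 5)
You should first be aware that the paper does not prove this statement: Theorem~\ref{thm: Favaron connectivity 2 } is imported verbatim from Favaron \cite{Favaron}, so there is no in-paper proof to compare you against. Judged on its own terms, your outline identifies the right tools and the classical route: the parity argument via perfect matchings of $G-s_1$ and $G-s_2$, the ``park $s_1,s_2$ outside $A$ and count exposed vertices'' device combined with Corollary~\ref{cor: def EFC} and Lemma~\ref{lem:randomlymatchable} to show the even component is randomly matchable, and Lemma~\ref{lem: gen isolating matching} to control $B$. This is indeed how such structure results are obtained (the present paper runs exactly the same ``isolate a vertex of $A$, hence $B-b$ is randomly matchable'' computation inside the proof of Lemma~\ref{lem: G is ECE then G has five conf}).

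As a proof, however, the proposal has genuine gaps. The decisive one is that the heart of part $(i)$ --- upgrading ``$B-b$ is $K_{2p}$ or $K_{p,p}$ for suitable $b$'' to the exact four-member list $K_{2p+1}$, $K_{2p+1}\sm b_1b_2$, $K_{p,p+1}$, $K_{p,p+1}+b_1b_2$ with $b_1,b_2$ in the $(p+1)$-stable set --- is explicitly deferred (``it then remains to identify such a $B$\,''), as is the degenerate case $|B|=1$ of part $(ii)$; these are precisely where the content of the theorem lives, so what you have is an outline rather than a proof. Two of the steps you do carry out also have unhandled cases. The parking step requires $s_1$ and $s_2$ to be simultaneously saturated outside $A$; if $s_1s_2\notin E$ and $N_B(s_1)=N_B(s_2)=\{b\}$ (which is not excluded by $2$-connectivity --- it just produces the alternative $2$-cut $\{s_1,b\}$), then in the extension $s_2$ may be matched to an exposed vertex of $A$, and with $|X|=2$ exposed vertices the count ``even and at most one, hence zero'' no longer yields a contradiction. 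Likewise, the third-component exclusion needs a vertex $v_3\in B$ with $B-v_3$ perfectly matchable; factor-criticality only gives a perfect matching of $G-v_3$, and when that matching routes both $s_1$ and $s_2$ into $B$ it leaves two unmatched vertices of $B-v_3$ that need not be adjacent, so the required internal perfect matching of $B-v_3$ does not follow immediately. Both of these are repairable, but each needs an explicit argument.
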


First, we show that there is no factor-critical ECE-graph of order $5$ or less.

\begin{rem}\label{rem:7 vertices}
Factor-critical $\ECE$-graphs have at least 7 vertices.
\end{rem}
\begin{proof}
It is clear that there is no ECE-graph on $3$ or less vertices. Assume that there exists a connected factor-critical ECE-graph $G$ with 5 vertices. If $\Delta(G)=2$, then for any edge $uv \in E(G)$ with $d(u)=d(v)=2$, there is no matching containing $uv$ and saturating $N(\{u,v\})$. Thus, $uv$ is not critical by Lemma \ref{lem: critical edge iff there is a matching}. For the other case, if  $\Delta(G) \geq 3$, then for a vertex $u$ with $d(u)\geq 3$, there exists a neighbour $v$ of $u$ such that $N[\{u,v\}]=V(G)$, a contradiction by Corollary \ref{cor:no-dom-edge}.   
\end{proof}

 \medskip

The general structure of a factor-critical ECE-graph $G$ of order at least $7$ and connectivity $2$ follows from Theorem \ref{thm: Favaron connectivity 2 }; for a 2-cut $S=\{s_1,s_2\}$, the graph $G -S$ has exactly two components $A$ and $B$ as described in Theorem \ref{thm: Favaron connectivity 2 } and illustrated in Figure \ref{fig:FCE graph}. 

\begin{figure}[htb]
\centering 
\begin{tikzpicture}[scale=.8]
\node [noddee] at (0,1) (a1) [label=above: \scriptsize $a_1$]  {};
\node [noddee] at (0,0) (a2) [label=below: \scriptsize $a_2$]  {}
	edge [] (a1);

\node [noddee] at (4,1) (b1) [label=above: \scriptsize $b_1$]  {};
\node [noddee] at (4,0) (b2) [label=below: \scriptsize $b_2$]  {};	
\node [noddee] at (5,.5) (b3)   {}
	edge [] (b1)
	edge [] (b2);
\node [noddee] at (2,1) (s1) [label=above: \scriptsize $s_1$]  {}
	edge [] (b1)
	edge [] (a1);
\node [noddee] at (2,0) (s2) [label=below: \scriptsize $s_2$]  {}
	edge [] (b2)
	edge [] (a2);

\draw[thick] (-.3,.5) ellipse (1.4cm and 1.4cm);		
\draw[thick] (4.3,.5) ellipse (1.4cm and 1.4cm);		

\node at (-.5,-1.3) {$A$};
\node at (4.5,-1.3) {$B$};	
\end{tikzpicture}  
\caption{The structure of factor-critical ECE-graphs where $|A|$ is even and $|B|$ is odd.}
\label{fig:FCE graph}
\end{figure}
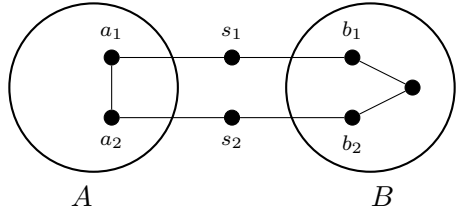

We will introduce five possible configurations with respect to $A$ and $B$ and then show that a factor-critical ECE-graph with connectivity 2 falls into one of these five types.  For a graph $G$, consider $A,B \subset V(G)$ each one with at least two vertices. We say that $A$ is \emph{partially-complete} to $B$ %if every vertex of $A$ is adjacent to a vertex of $B$, and every vertex of $B$ is adjacent to precisely one vertex of $A$. In other words,  $A$ is partially-complete to $B$ 
if there exist a non-empty partition $A_1,A_2$  of $A$ and a non-empty partition $B_1,B_2$  of $B$  such that for each $i=1,2$, $A_i$ is complete to $B_i$ and $A_i$ has no neighbour in $B_{3-i}$. Let $S=\{s_1,s_2\}$ be an independent set. \medskip
%Notice that if  $A$ is partially-complete to $B$, then  $|A|\leq |B|$.

\begin{itemize}
\item \textbf{Type I}: $A\cong K_{2q}$, $B \cong K_{2p+1}$ for $p,q\geq 1$  such that  $S$ is complete to $A$, and $S$ is partially-complete to $B$ (see Figure \ref{fig: FC- ESE graph (a)}). \medskip	
\item \textbf{Type II}: $A\cong K_{q,q}$, $B \cong K_{2p+1}$ for $p,q\geq 1$  such that  for $i=1,2$, each $s_i$ is complete to a distinct $(q)$-stable set of $A$, and $S$ is partially-complete to $B$ (see Figure \ref{fig: FC- ESE graph (b)}). 	\medskip
\item \textbf{Type III}:   $A\cong K_{2q}$, $B \cong K_{p,p+1}$ for $p,q\geq 1$ such that  $S$ is complete to $A$, and $S$ is partially-complete to the $(p+1)$-stable set of $B$ (see Figure \ref{fig: FC- ESE graph (c)}). \medskip
\item \textbf{Type IV}:  $A\cong K_{q,q}$, $B \cong K_{p,p+1}$ for $p,q\geq 1$ such that  for $i=1,2$, each $s_i$  is complete to distinct $(q)$-stable sets of $A$, and $S$ is partially-complete to the $(p+1)$-stable set of $B$ (see Figure \ref{fig: FC- ESE graph (d)}). \medskip
\item \textbf{Type V}:  $A - \{a_1, a_2\} \cong K_{2q-2}$ for $q\geq 3$, $B \cong K_1$, and there is a vertex $w \in A$ such that $\{a_1, a_2,w\}$ is a stable set, each $s_i$ is complete to $\{b, a_i,w\}$ for $i = 1, 2$, and
$\{a_1, a_2,w\}$ is complete to $A-\{a_1, a_2,w\}$ (see Figure \ref{fig: FC- ESE graph (e)}).
\end{itemize}

%\begin{itemize}
%\item[$(i)$]  $A\cong K_{2q}$, $B \cong K_{2p+1}$ for $p,q\geq 1$ and a non-empty partition $S,T$  of $V(B)$  such that  for $i=1,2$, each $s_i$ is complete to $A$, and $s_1$ (resp. $s_2$) is complete to $S$ (resp. $T$) (see Figure \ref{fig: FC- ESE graph (a)}). 	
%\item[$(ii)$]  $A\cong K_{q,q}$, $B \cong K_{2p+1}$ for $p,q\geq 1$ and a non-empty partition $S,T$  of $V(B)$  such that  for $i=1,2$, each $s_i$ is complete to a different $(q)$-stable set of $A$, and $s_1$ (resp. $s_2$) is complete to $S$ (resp. $T$) (see Figure \ref{fig: FC- ESE graph (b)}). 	
%\item[$(iii)$]  $A\cong K_{2q}$, $B \cong K_{p,p+1}$ for $p,q\geq 1$ and a non-empty partition $S,T$ of the $(p+1)$-stable set of $B$ such that  for $i=1,2$, each $s_i$ is complete to $A$, and $s_1$ (resp. $s_2$) is complete to $S$ (resp. $T$) (see Figure \ref{fig: FC- ESE graph (c)}). 
%\item[$(iv)$]  $A\cong K_{q,q}$, $B \cong K_{p,p+1}$ for $p,q\geq 1$ and a non-empty partition $S,T$ of the $(p+1)$-stable set of $B$ such that  for $i=1,2$, each $s_i$ is complete to a different $(q)$-stable set of $A$, and $s_1$ (resp. $s_2$) is complete to $S$ (resp. $T$) (see Figure \ref{fig: FC- ESE graph (d)}). 
%\item[$(v)$]  $A\cong K_{2q}$, $B \cong K_1$ for $q\geq 3$ and a stable set $\{a_1,w,a_2\}\subset V(A)$ such that  for $i=1,2$, each $s_i$ is complete to $\{b,a_i,w\}$, and each of $a_1,w,a_2$ is complete to $A-\{a_1,w,a_2\}$  (see Figure \ref{fig: FC- ESE graph (e)}). 
%\end{itemize}

\begin{figure}[htb]
\centering     %%% not \center
\subfigure[Type I]{\label{fig: FC- ESE graph (a)}
\begin{tikzpicture}[scale=.6]
\node [noddee1] at (0,0) (a1)  {};
\node [noddee1] at (0,2) (a2)  {}
	edge [] (a1);
\node [noddee1] at (2,0) (a3)  {}
	edge [] (a1)
	edge [] (a2);
\node [noddee1] at (2,2) (a4)  {}
	edge [] (a1)
	edge [] (a2)
	edge [] (a3);
\node [noddee1] at (6,1) (b1)  {};
\node [noddee1] at (7,2) (b2)  {}
	edge [] (b1);
\node [noddee1] at (7,0) (b3)  {}
	edge [] (b1)
	edge [] (b2);
\node [noddee1] at (4,1.5) (s1) [label=above: \scriptsize $s_1$]  {}
	edge [] (a1)
	edge [] (a2)
	edge [] (a3)
	edge [] (a4)
	edge [] (b1)
	edge [] (b2);
\node [noddee1] at (4,.5) (s2) [label=below: \scriptsize $s_2$]  {}
	edge [] (a1)
	edge [] (a2)
	edge [] (a3)
	edge [] (a4)
	edge [] (b3);
\node at (1,-1) {\scriptsize $A\cong K_{2q}$};
\node at (7,-1) {\scriptsize $B \cong K_{2p+1}$};	
\end{tikzpicture} }\hspace*{1cm}
\subfigure[Type II]{\label{fig: FC- ESE graph (b)}
\begin{tikzpicture}[scale=.6]
\node [noddee1] at (0,0) (a1)  {};
\node [noddee1] at (0,2) (a2)  {}
	edge [] (a1);
\node [noddee1] at (2,0) (a3)  {}
	edge [] (a2);
\node [noddee1] at (2,2) (a4)  {}
	edge [] (a1)
	edge [] (a3);
\node [noddee1] at (6,1) (b1)  {};
\node [noddee1] at (7,2) (b2)  {}
	edge [] (b1);
\node [noddee1] at (7,0) (b3)  {}
	edge [] (b1)
	edge [] (b2);
\node [noddee1] at (4,1.5) (s1) [label=above: \scriptsize $s_1$]  {}
	edge [] (a2)
	edge [] (a4)
	edge [] (b1)
	edge [] (b2);
\node [noddee1] at (4,.5) (s2) [label=below: \scriptsize $s_2$]  {}
	edge [] (a1)
	edge [] (a3)
	edge [] (b3);
\node at (1,-1) {\scriptsize $A\cong K_{q,q}$};
\node at (7,-1) {\scriptsize $B \cong K_{2p+1}$};		
\end{tikzpicture} }
\subfigure[Type III]{\label{fig: FC- ESE graph (c)}
\begin{tikzpicture}[scale=.6]
\node [noddee1] at (0,0) (a1)  {};
\node [noddee1] at (0,2) (a2)  {}
	edge [] (a1);
\node [noddee1] at (2,0) (a3)  {}
	edge [] (a1)
	edge [] (a2);
\node [noddee1] at (2,2) (a4)  {}
	edge [] (a1)
	edge [] (a2)
	edge [] (a3);
\node [noddee1] at (6,0) (b1)  {};
\node [noddee1] at (6,1) (b2)  {};
\node [noddee1] at (6,2) (b3)  {};
\node [noddee1] at (7.5,.5) (b11)  {}
	edge [] (b1)
	edge [] (b2)
	edge [] (b3);
\node [noddee1] at (7.5,1.5) (b11)  {}
	edge [] (b1)
	edge [] (b2)
	edge [] (b3);
\node [noddee1] at (4,1.5) (s1) [label=above: \scriptsize $s_1$]  {}
	edge [] (a1)
	edge [] (a2)
	edge [] (a3)
	edge [] (a4)
	edge [] (b3);
\node [noddee1] at (4,.5) (s2) [label=below: \scriptsize $s_2$]  {}
	edge [] (a1)
	edge [] (a2)
	edge [] (a3)
	edge [] (a4)
	edge [] (b1)
	edge [] (b2);
\node at (1,-1) {\scriptsize $A\cong K_{2q}$};
\node at (7,-1) {\scriptsize $B \cong K_{p,p+1}$};	
\end{tikzpicture} }\hspace*{1cm}
\subfigure[Type IV]{\label{fig: FC- ESE graph (d)}
\begin{tikzpicture}[scale=.6]
\node [noddee1] at (0,0) (a1)  {};
\node [noddee1] at (0,2) (a2)  {}
	edge [] (a1);
\node [noddee1] at (2,0) (a3)  {}
	edge [] (a2);
\node [noddee1] at (2,2) (a4)  {}
	edge [] (a1)
	edge [] (a3);
\node [noddee1] at (6,0) (b1)  {};
\node [noddee1] at (6,1) (b2)  {};
\node [noddee1] at (6,2) (b3)  {};
\node [noddee1] at (7.5,.5) (b11)  {}
	edge [] (b1)
	edge [] (b2)
	edge [] (b3);
\node [noddee1] at (7.5,1.5) (b11)  {}
	edge [] (b1)
	edge [] (b2)
	edge [] (b3);
\node [noddee1] at (4,1.4) (s1) [label=above: \scriptsize $s_1$]  {}
	edge [] (a2)
	edge [] (a4)
	edge [] (b3);
\node [noddee1] at (4,.6) (s2) [label=below: \scriptsize $s_2$]  {}
	edge [] (a1)
	edge [] (a3)
	edge [] (b1)
	edge [] (b2);
\node at (1,-1) {\scriptsize $A\cong K_{q,q}$};
\node at (7,-1) {\scriptsize $B \cong K_{p,p+1}$};		
\end{tikzpicture} }

\subfigure[Type V]{\label{fig: FC- ESE graph (e)}
\begin{tikzpicture}[scale=.8]
\node [noddee] at (6,.5) (b) [label=right: \scriptsize $b$]  {};
\node [noddee] at (4.5,1) (s1) [label=above: \scriptsize $s_1$]  {}
	edge [] (b);
\node [noddee] at (4.5,0) (s2) [label=below: \scriptsize $s_2$]  {}
	edge [] (b);
\node [noddee2] at (-.2,.7) (x1) {};
\node [noddee2] at (-.4,1.4) (x2) {}
	edge [] (x1);
\node [noddee2] at (.4,1.2) (x3) {}
	edge [] (x1)
	edge [] (x2);
\node [noddee2] at (.6,.1) (x4) {}
	edge [] (x1)
	edge [] (x2)
	edge [] (x3);
\node [noddee2] at (.2,-.4) (x5) {}
	edge [] (x1)
	edge [] (x2)
	edge [] (x3)
	edge [] (x4);
\node [noddee2] at (-.6,.2) (x6) {}
	edge [] (x1)
	edge [] (x2)
	edge [] (x3)
	edge [] (x4)
	edge [] (x5);
\node [noddee2] at (.7,.7) (x7) {}
	edge [] (x1)
	edge [] (x2)
	edge [] (x3)
	edge [] (x4)
	edge [] (x5)
	edge [] (x6);
\node [noddee] at (3,1.5) (a1) [label=above: \scriptsize $a_1$]  {}
	edge [] (s1)
	edge [] (x1)
	edge [] (x2)
	edge [] (x3)
	edge [] (x4)
	edge [] (x5)
	edge [] (x6)
	edge [] (x7);
\node [noddee] at (3,.5) (w) [label=above: \scriptsize $w$]  {}
	edge [] (s1)
	edge [] (s2)
	edge [] (x1)
	edge [] (x2)
	edge [] (x3)
	edge [] (x4)
	edge [] (x5)
	edge [] (x6)
	edge [] (x7);
\node [noddee] at (3,-.5) (a2) [label=below: \scriptsize $a_2$]  {}
	edge [] (s2)
	edge [] (x1)
	edge [] (x2)
	edge [] (x3)
	edge [] (x4)
	edge [] (x5)
	edge [] (x6)
	edge [] (x7);
\draw[thick] (0,.5) ellipse (1.2cm and 1.4cm);		
\node at (0,-1.3) {$K_{2q-3}$};
\end{tikzpicture} }
\caption{The family $\F$ of factor-critical $\ECE$-graphs with connectivity 2.}
\label{fig:FC- ESE conn 2}
\end{figure}
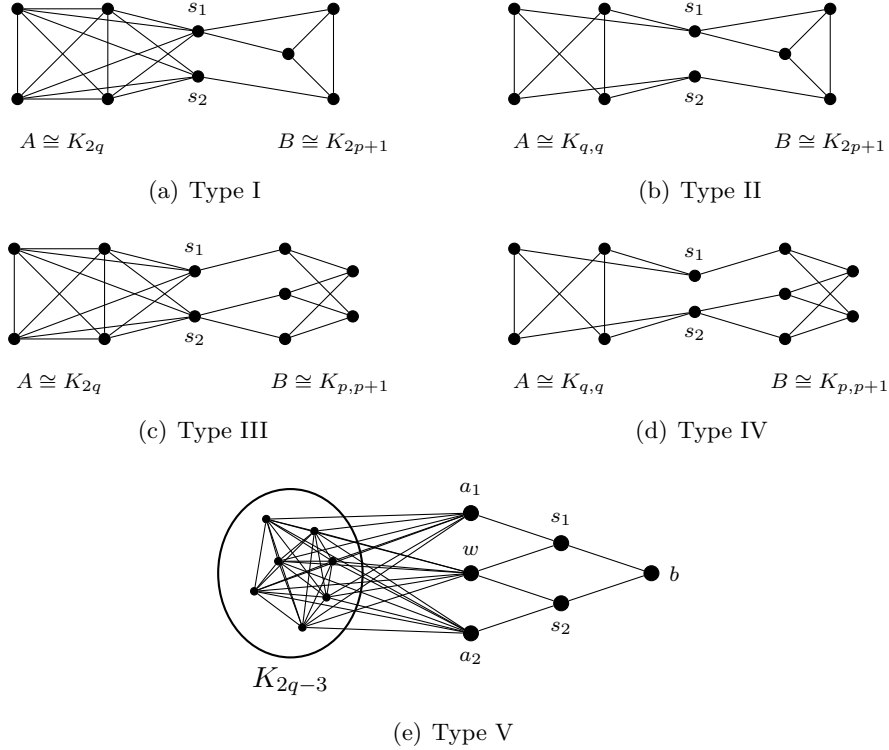

Let $\F$ stand for the family of all graphs falling into one of the five types of configurations I, II, III, IV, V depicted in Figure \ref{fig:FC- ESE conn 2}.
The main result of this section is the following.

\begin{thm}\label{thm:chrc of ECE conn 2}
A graph $G$ is factor-critical $\ECE$ with connectivity $2$ if and only if $G\in \F$ where the five types describing $\F$ are depicted in Figure \ref{fig:FC- ESE conn 2}. \medskip
\end{thm}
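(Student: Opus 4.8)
The plan is to prove both implications, disposing of the easy direction by direct verification and concentrating the work on the necessity direction, where Favaron's structure theorem (Theorem \ref{thm: Favaron connectivity 2 }) must be pruned down to exactly the five types using the critical-edge characterization.

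For sufficiency ($G\in\F\Rightarrow G$ is factor-critical $\ECE$ with connectivity $2$), I would check each of the five types in turn. In every type, $S=\{s_1,s_2\}$ separates $A$ from $B$, so the connectivity is at most $2$; one then verifies there is no cut vertex, giving connectivity exactly $2$. Factor-criticality is checked by exhibiting, for each vertex $v$, a perfect matching of $G-v$, which is routine since $A$ and $B$ are (near-)cliques or balanced complete bipartite graphs and $S$ attaches to them in a controlled way. Equimatchability follows from Lemma \ref{lem: defn equim} by ruling out an independent triple $I$ with $G\sm I$ perfectly matchable, and criticality of every edge follows from Lemma \ref{lem: critical edge iff there is a matching} by displaying, for each of the few edge-classes (inside $A$, inside $B$, between $S$ and $A$, between $S$ and $B$), a matching that contains the edge and saturates its neighbourhood. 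This is a finite, type-by-type verification.

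For necessity, I would fix a $2$-cut $S=\{s_1,s_2\}$ (it exists since $G$ is $2$-connected by Lemma \ref{lem:ECE 2 connected} and has connectivity exactly $2$; moreover $|V(G)|\geq 7$ by Remark \ref{rem:7 vertices}, so Theorem \ref{thm: Favaron connectivity 2 } applies) and invoke Favaron's theorem to obtain the components $A$ and $B$ with their lists of possibilities. The argument then proceeds in stages. First I would show that $S$ is independent, by arguing that an edge $s_1s_2$, being critical, would require a matching through it saturating $N(\{s_1,s_2\})$, which is obstructed by the parity of matchings confined to the odd component $B$ together with the absence of dominating edges (Corollary \ref{cor:no-dom-edge}). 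Next I would pin down the adjacency of $S$ to $A$, proving that $S$ is complete to $A$ when $A\cong K_{2q}$ and that each $s_i$ is complete to a distinct stable set when $A\cong K_{q,q}$, by testing criticality of the edges incident to $A$ and again using Corollary \ref{cor:no-dom-edge} to forbid dominating edges. Then I would determine the adjacency of $S$ to $B$, showing it must be partially-complete (if both $s_i$ were complete to all of $B$ an edge would dominate $V(G)$) and, in tandem, eliminating the variants $B\cong K_{2p+1}\sm b_1b_2$ and $B\cong K_{p,p+1}+b_1b_2$ from Favaron's list: in the added-edge case the edge $b_1b_2$ fails to be critical by a counting argument on matchings confined to $B$, and the deleted-edge case is excluded symmetrically. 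Finally, the case $|B|=1$ is handled separately, where analysing factor-criticality together with Lemma \ref{lem: critical edge iff there is a matching} forces the $K_{2q-2}$ structure with the extra independent vertex $w$ of Type V. Collecting these stages shows that $G$ realises exactly one of the five configurations.

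The main obstacle is the middle stage of necessity: determining the precise attachment of the cut $S$ to the components and simultaneously eliminating the edge-modified variants of $B$. Each of these reductions rests on converting the ECE hypothesis, via Lemma \ref{lem: critical edge iff there is a matching}, into the existence of a matching through a prescribed edge saturating a prescribed neighbourhood, and then deriving a contradiction from the parity of matchings restricted to the odd component $B$ (or to $A$). Organising these parity arguments uniformly across the $K_{2q}$ versus $K_{q,q}$ and the $K_{2p+1}$ versus $K_{p,p+1}$ subcases, while tracking which vertices $s_1$ and $s_2$ may reach, is where the bookkeeping is heaviest.
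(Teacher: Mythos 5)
Your sufficiency direction matches the paper's (a type-by-type check of equimatchability via Lemma \ref{lem: defn equim} and of criticality via Lemma \ref{lem: critical edge iff there is a matching}), and your necessity direction starts from the same place (Lemma \ref{lem:ECE 2 connected}, Remark \ref{rem:7 vertices}, then Theorem \ref{thm: Favaron connectivity 2 }). But the necessity outline has genuine gaps. First, ``partially-complete to $B$'' is a much stronger statement than the one you propose to prove: it requires that $N_B(s_1)$ and $N_B(s_2)$ \emph{partition} $B$ (respectively the $(p+1)$-stable set of $B$), i.e.\ (a) every such vertex of $B$ is adjacent to at least one of $s_1,s_2$, and (b) $s_1$ and $s_2$ have no common neighbour in $B$. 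Your sketch only rules out both $s_i$ being complete to all of $B$ via the no-dominating-edge corollary, which establishes neither (a) nor (b). The paper gets (a) from a dedicated claim (if $w\in B\setminus N(S)$ then $B-w$ has no perfect matching) and (b) from a separate argument that first pins down the structure of $A\cup\{s_i\}$. Second, your proposed ordering breaks: you want to prove $S$ independent \emph{first}, by a parity obstruction on matchings confined to the odd component $B$, but that obstruction only bites if the vertex of $B$ left exposed necessarily lies in $N(\{s_1,s_2\})$ --- which is exactly the domination fact (a) that you have not yet established at that point. The paper proves the attachment of $S$ to $B$ first and only then deduces $s_1s_2\notin E(G)$ from it.

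The deeper missing ingredient is Lemma \ref{lem: gen isolating matching}: for a minimal matching $M_v$ isolating $v$, the graph $G\setminus(V(M_v)\cup\{v\})$ is randomly matchable, hence $K_{2n}$ or $K_{n,n}$. This is the engine the paper uses throughout necessity --- to show $B-b$ is randomly matchable for every $b\in N_B(S)$ (which is what eliminates $K_{2p+1}\setminus b_1b_2$ and $K_{p,p+1}+b_1b_2$ and confines $N_B(S)$ to the $(p+1)$-stable set), to prove that $A\cup\{s_i\}$ induces $K_{2q+1}$ or $K_{q,q+1}$, and to drive the entire $|B|=1$ analysis (which in the paper splits into four subcases eliminating $A'\cong K_{q-1,q-1}$ and forcing Type V). Corollary \ref{cor:no-dom-edge} and parity counting, which are the only tools your sketch invokes, do not yield these randomly-matchable conclusions, so the middle and final stages of your necessity argument cannot be completed as described.
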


We will obtain the proof of Theorem \ref{thm:chrc of ECE conn 2} as two separate lemmas, each one proving one direction.

\begin{lem}\label{lem:All graphs belonging to F are ECE}
All graphs belonging to $\F$ are $\ECE$-graphs. 
%having a configurations as illustrated in Figure \ref{fig:FC- ESE conn 2} are ECE-graphs. 
%Suppose that for a 2-cut $S=\{s_1,s_2\}$, the graph $G \sm S$ has exactly two components $A$ and $B$ and the followings hold, then $G$ is ECE-graph 
%\begin{itemize}
%\item[$(i)$] B is isomorphic to either $ K_{2p+1}$ or $K_{p,p+1}$. Moreover, if $B \cong K_{2p+1}$ (resp. $K_{p,p+1}$), then $S$ is partially-complete to $B$ (resp. $(p+1)$-stable set of $B$).
%\item[$(ii)$] $S$ is independent set. 
%\item[$(iii)$] If $\vert B \vert > 1$, then $A \cup \{s_1,s_2\}$ induces either $K_{2q+2} \sm \{s_1s_2\}$ or  $K_{q+1,q+1} \sm \{s_1s_2\}$. If $\vert B \vert = 1$, then for $i=1,2$, $b \in B$, $a_i,w \in A$, we have $N(s_i)= \{b,a_i,w\}$ and $A \cong K_{2q} \sm \{a_1a_2,wa_1,wa_2 \}$ for $q\geq 3$.
%\end{itemize}
\end{lem}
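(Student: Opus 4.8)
The plan is to verify, for each of the five types in turn, the three defining properties of a factor-critical $\ECE$-graph: that $G$ is factor-critical, that $G$ is equimatchable, and that every edge of $G$ is critical. Throughout I write $S=\{s_1,s_2\}$ for the $2$-cut and exploit that, after deleting $S$, the graph splits into the two prescribed pieces $A$ and $B$ with no edge between them, so that every matching of $G$ decomposes into its restriction to $A$, its restriction to $B$, and the (at most two) edges meeting $S$. A uniform bookkeeping device is that $|A|$ is even while $|B|$ is odd in Types I--IV (and $|A|$ even, $|B|=1$ in Type V), so that $|V(G)|$ is odd in every case, consistent with factor-criticality.

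First I would establish factor-criticality by exhibiting a perfect matching of $G-v$ for each vertex $v$, grouping vertices by orbit. The parity count is short: deleting $v$ throws exactly one of $A$, $B$ into odd order, and the deficit is repaired by routing one edge through $S$ into the appropriate side. Concretely, when $v\in B$ one matches $B-v$ internally and absorbs all of $S$ into $A$; when $v\in S$, say $v=s_1$, one matches the surviving $s_2$ into $B$ (possible since $s_2$ is complete to the non-empty part $B_2$) so that $A$ stays even and $B$ loses one vertex to become even; and when $v\in A$ one matches one $s_i$ into $A-v$ and the other $s_j$ into its side of $B$. The completeness and partial-completeness hypotheses supply the required incidences, and for the bipartite pieces $K_{q,q}$ and $K_{p,p+1}$ one only checks that the two sides stay balanced after the deletion, which the parities arrange.

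Next I would prove equimatchability via Lemma \ref{lem: defn equim}: it suffices to show that no independent set $I$ of three vertices has $G\setminus I$ admitting a perfect matching. Writing $a=|I\cap A|$, $b=|I\cap B|$, $s=|I\cap S|$, the clique structure forces $a\le 1$ (resp.\ $b\le 1$) whenever $A$ (resp.\ $B$) is complete, while $S$ being complete to $A$ (Types I, III) or each $s_i$ being complete to one side of $A$ (Types II, IV) sharply restricts which distributions can even be independent. For the surviving distributions I would argue in one of two ways. Either $I$ is not independent, because partial-completeness forces a chosen vertex of the relevant side of $B$ to be adjacent to a chosen $s_i$ (every such vertex sees exactly one $s_i$), or some chosen vertex of $A$ is adjacent to a chosen $s_i$; or else $G\setminus I$ has no perfect matching by a Hall argument, as in Theorem \ref{thm:hall}: removing three vertices from one side $A^1$ of $K_{q,q}$ shrinks the neighbourhood of the opposite side to $N(A^2)=(A^1\setminus I)\cup\{s_2\}$ of size $q-2<q=|A^2|$, so no perfect matching can saturate $A^2$.

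Finally I would show that every edge is critical through Lemma \ref{lem: critical edge iff there is a matching}, i.e.\ by producing for each edge $uv$ a matching containing $uv$ and saturating $N(\{u,v\})$; Corollary \ref{cor:no-dom-edge} guarantees a vertex outside $N[\{u,v\}]$ available to be left exposed, and the matching is built orbit-by-orbit (an edge inside $A$, inside $B$, or joining $S$ to a side), reusing the parity-routing idea from the factor-criticality step to saturate both neighbourhoods at once. The step I expect to be the main obstacle is the equimatchability verification for the types with a bipartite component, namely II, IV, and especially the ad hoc Type V, where the independence number exceeds two so that independent triples genuinely occur; there, ruling out a perfect matching of $G\setminus I$ rests on the delicate Hall-type balance counts rather than on a one-line ``no such triple'' observation, and the small-parameter degeneracies (for instance $q\le 2$, or the roles of $w$ and the stable set $\{a_1,a_2,w\}$ in Type V) must be treated separately.
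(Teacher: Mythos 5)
Your proposal is correct and follows essentially the same route as the paper's proof: equimatchability is verified type by type via Lemma \ref{lem: defn equim} using exactly the Hall-deficiency counts you describe, and edge-criticality via Lemma \ref{lem: critical edge iff there is a matching} by constructing a saturating matching for each orbit of edges (the paper omits the explicit factor-criticality check that you add). One caution: do not lean on Corollary \ref{cor:no-dom-edge} in the criticality step, since that corollary is a consequence of being $\ECE$ and invoking it here would be circular; instead read off directly from each of the five configurations that $N[\{u,v\}]\neq V(G)$ before exhibiting the matching.
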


\begin{proof}
Let $G\in \F$. We will show that each one of the five types of configuration is an ECE-graph. Let $\mathcal{G}_i$ be the class of all graphs of Type $i$ as described above and depicted in Figure \ref{fig:FC- ESE conn 2}.

\textbf{$G \in \mathcal G_1$ is equimatchable:} Since $S$ is complete to $A$ and partially-complete to $B$, and since $G- S$ consists of two cliques, it follows that the independence number of $G$ is equal to $2$. Thus $G$ is equimatchable by Lemma \ref{lem: defn equim}. 

\textbf{$G \in \mathcal G_2$ is equimatchable:} Let $A_1,A_2$ be the bipartition of $A$ with  $A_1=\{a_1,a_2,\ldots,a_{q}\}$ and $A_2=\{a_1',a_2',\ldots,a_{q}'\}$ and without loss of generality suppose that for $i=1,2$,  each $s_i$ is complete to $A_i$. Consider an independent set $I$ of size $3$. It is clear that $I$ cannot contain both $s_1$ and $s_2$ since $N(\{s_1,s_2\}) = V(G)$. Thus, there are two possibilities. Either $I$ contains at least two vertices of $A$, say $a_1,a_2 \in A_1$, then $G - I$ has no perfect matching since $\{a_1',a_2',\ldots,a_{q}'\}$ is an independent set in $G - I$ having only $q-1$ neighbours in $G - I$.  Or $I$ contains one vertex from each one of the sets $A,S$ and $B$, say $I=\{a,s,b\}$, respectively. Note that if $s=s_1$, then $a \in A_2$. In this case, $G - I$ has no perfect matching since $\{a_1,a_2,\ldots,a_{q}\}$ is an independent set in $G - I$ having only $q-1$ neighbours in $G - I$. It then follows from Lemma \ref{lem: defn equim} that $G$ is equimatchable. 

\textbf{$G \in \mathcal G_3$ is equimatchable:}
Let $B_1,B_2$ be the bipartition of $B$ with $B_1=\{b_1,b_2,\ldots,b_{p+1}\}$ and $B_2=\{b_1',b_2',\ldots,b_{p}'\}$. Note that $S$ is complete to $A$ and partially-complete to $B_1$.  Consider an independent set $I$ of size $3$. Suppose $I$ contains both vertices of $S=\{s_1,s_2\}$, then its third vertex belongs to $B_2$. We then observe that $G - I$ has no perfect matching since $B_1$ is an independent set of $G-I$ of size $p+1$ but it has only $p-1$ neighbours in $G-I$.  So assume that $I$ contains at most one vertex from $A\cup S$, thus at least two vertices  $x,y$ in $B$, necessarily both in $B_1$ or both in $B_2$. If $x,y \in B_1$, then  $G - I$ has no perfect matching, since $B_2$ is an independent set in $G - I$ of size $p$ but it has at most  $p-1$ neighbours in $G-I$. Finally, if $x,y \in B_2$, then  $G -I$ has no perfect matching since $B_1$ is an independent set in $G -I$ of size $p+1$ but it has at most $p$ neighbours in $G-I$. Hence $G$ is equimatchable by Lemma \ref{lem: defn equim}.

\textbf{$G \in \mathcal G_4$ is equimatchable:} Let $A_1$ and $A_2$ be the bipartition of $A$ with $|A_1|=|A_2|=q\geq 1$ and $s_i$ is complete to $A_i$ for $i=1,2$. Consider an independent set $I$ of size $3$. If $I$ contains both vertices of $S=\{s_1,s_2\}$ or at least two vertices of $B$,  then we can show that $G$ is equimatchable in a similar way as above. Thus, assume that  $I$ contains at least two vertices of $A$ which should be clearly in the same part of $A$, say $A_1$. Then $G - I$ has no perfect matching since $A_2$ is an independent set in $G - I$ of size $q$ but it has at most  $q-1$ neighbours in $G-I$ (in $A_1\cup\{s_2\}$). Finally, if $I$  consists of one vertex from each one of the sets $A,S$ and $B$, without loss of generality $I=\{a,s_1,b\}$ where $b\in V(B)$ and $a\in A_2$, then $G - I$ has no perfect matching since $A_1$ is an independent set in $G - I$ of size $q$ but it has $q-1$ neighbours in $G-I$. Hence, in each case, $G$ is equimatchable by Lemma \ref{lem: defn equim}.

\textbf{$G \in \mathcal G_1\cup \mathcal G_2\cup \mathcal G_3 \cup \mathcal  G_4$ is ECE:}
We will show that for every $e=uv \in E(G)$ the graph $G\sm e$ is not equimatchable by considering every possible type of edge. First, suppose $e$ links two vertices in $A$ and $S$, say  without loss of generality $u \in A$ and $v=s_1$. Then for some $b \in B - N(s_1)$ there is a perfect matching $M_b$ in $B-b$, and a perfect matching $M_A$  in $A-u'$ for some $u'$ such that $uu', s_2u'\in E$. %containing the edge $uu' \in A$ for some   $u' \in N_A(s_2)$ 
Now, the set $ M_A \cup M_b \cup \{uv,u's_2\}$ is a matching containing $uv$ and saturating $N(\{u,v\} )$. It follows from Lemma \ref{lem: critical edge iff there is a matching} that $G\sm e$ is not equimatchable. Similarly, if $e$ links two vertices in $B$ and $S$, say  $u \in B$ and $v=s_1$, then letting $M_A$ and $M_u$ being perfect matchings in $A$ and $B-u$ respectively, the set  $M_A \cup M_u \cup \{uv\}$ is a matching containing $uv$ and  saturating $N(\{u,v\})$. So, $G\sm e$ is not equimatchable.  Now, if $e$ belongs to $A$, then there is a perfect matching $M_A$ of $A$ containing the edge $uv$ such that for some $b_i \in N_B(s_i)$ for $i=1,2$,  the set $M_A \cup \{s_1b_1,s_2b_2\}$ is a matching containing $uv$ and  saturating $N(\{u,v\})$. So, $G\sm e$ is not equimatchable. Finally, if  $e$ belongs to $B$, then there exists $b \in B -\{u,v\}$ due to $p\geq 1$, say $bs_i \in E(G)$ for some $i\in \{1,2\}$ such that there exists a perfect matching $M_b$ of $B-b$ containing the edge $uv$, and a vertex $a \in N_A(s_{3-i})$ such that the set $M_b \cup \{s_ib,s_{3-i}a\}$ is a matching containing $uv$ and  saturating $N(\{u,v\})$. So, $G\sm e$ is not equimatchable. Hence $G$ is ECE. 

\textbf{$G \in \mathcal G_5$ is equimatchable:}
By definition, the set $\{a_1,a_2,w\}$ is independent, $s_i$ is complete to $\{b,a_i,w\}$ for $i=1,2$, and $\{a_1,w,a_2\}$ is complete to $A'=A-\{a_1,w,a_2\}$. 
Consider an independent set $I$ of size $3$. If $I$ contains $S$ or $I =\{a_1,a_2,w\}$, then $G - I$ has an odd component implying that $G -I$ has no perfect matching. The only remaining possibility is that $I$ consists of the vertex $b$ and two vertices from $\{a_1,a_2,w\}$. In this case, $\{s_1,s_2\}$ is an independent set in $G \sm I$, but it has a unique neighbour in $G \sm I$. So $G \sm I$ has no perfect matching. Therefore, $G$ is equimatchable by Lemma \ref{lem: defn equim}.

\textbf{$G \in \mathcal G_5$ is ECE:}
Let us show that for every possible $e=uv \in E(G)$, the graph $G\sm e$ is not equimatchable using Lemma \ref{lem: critical edge iff there is a matching}. Let $A'=A-\{a_1,w,a_2\}$. If $e$ links two vertices in $B$ and $S$, say  $u=b$ and $v=s_1$, then for some $a' \in A'$, the set  $\{s_1b,s_2w, a_1a'\}$ is a matching containing $s_1b$ and  saturating $N(\{s_1,b\})$. Similarly, if $e$ links two vertices in $\{a_1,a_2,w\}$ and $S$, say without loss of generality $u \in \{a_1,w\}$ and $v=s_1$, then for $x \in \{a_1,w\}\sm \{u\}$, there is a perfect matching $M_x$ of  the graph induced by $A' \cup \{x\}$ such that the set  $M_x \cup \{s_1u, bs_2\}$  is a matching containing $s_1u$ and saturating $N(\{s_1,u\})$.   Finally, the cases where $e$ links two vertices in $\{a_1,a_2,w\}$ and $A'$,  say without loss of generality $u=a_1$ and $v\in A'$, or two vertices in $A'$ will be handled commonly. In these cases, there is a perfect matching $M_{a_1}$ of  the graph induced by $A' \cup \{a_1\}$ that contains the edge $a_1v$. It follows that the set   $M_{a_1} \cup \{s_1w,s_2a_2\}$ is a matching containing $a_1v$ and  saturating $N(\{a_1,v\})$. Hence $G$ is ECE-graph by Lemma \ref{lem: critical edge iff there is a matching}. 
\end{proof}

Now, we will show that all factor-critical ECE-graphs with connectivity $2$ belong to the family $\F$. To this end, we will first give an equivalent formulation for a graph to belong to the family $\F$ which follows directly from the definitions of Types I, II, III, IV, V forming the family $\F$ (as depicted in Figure \ref{fig:FC- ESE conn 2}).

\begin{prop}\label{prop:member of F}
Let $G$ be a factor-critical graph of order at least $7$ and connectivity $2$.
Then $G$ is a member of $\F$ if and only if there exists a 2-cut $S=\{s_1,s_2\}$ such that $G -S$ has exactly two components $A$ and $B$, and the followings hold:
\begin{itemize}
\item[$(i)$] B is isomorphic to either $K_1$ or $K_{2p+1}$ or $K_{p,p+1}$ for $p\geq 1$. Moreover, for $p\geq 1$, if $B \cong K_{2p+1}$ (resp. $K_{p,p+1}$), then $S$ is partially-complete to $B$ (resp. $(p+1)$-stable set of $B$).
\item[$(ii)$] $S$ is independent set. 
\item[$(iii)$] If $\vert B \vert > 1$, then $A \cup \{s_1,s_2\}$ induces either $K_{2q+2} \sm s_1s_2$ or  $K_{q+1,q+1} \sm s_1s_2$ for $q\geq 1$. If $\vert B \vert = 1$, then we have $N(s_i)= \{b,a_i,w\}$ where $b \in B$, and $a_i,w \in A$ for $i=1,2$, and $A \cong K_{2q} \sm \{a_1a_2,wa_1,wa_2 \}$ for $q\geq 3$.
\end{itemize}
\end{prop}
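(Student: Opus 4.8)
The statement is a translation between two descriptions of the same family, so the plan is to establish both implications simultaneously by matching each of the five Types against the combinatorial conditions (i)--(iii), splitting according to the isomorphism type of $B$. Throughout, $S=\{s_1,s_2\}$ denotes the prescribed $2$-cut and $A,B$ the two components of $G-S$. The $B$-side and the independence of $S$ require almost no work: condition (ii) appears verbatim in every Type (in Type~V it follows from $N(s_i)=\{b,a_i,w\}$, which excludes $s_{3-i}$), while the description of $B$ in condition (i)---together with the partial-completeness of $S$ to $B$, respectively to the $(p+1)$-stable set of $B$---is literally the $B$-part of the defining data of Types~I--IV, and $B\cong K_1$ is the $B$-part of Type~V. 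Hence the whole content lies in translating condition (iii).

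For $|B|>1$ the core observation is a dictionary for the graph induced by $A\cup S$. If this graph is $K_{2q+2}\sm s_1s_2$, then deleting the two vertices $s_1,s_2$ (non-adjacent by (ii)) leaves a clique, so $A\cong K_{2q}$; moreover every vertex of $A$ is adjacent to both $s_1$ and $s_2$, i.e.\ $S$ is complete to $A$. This is exactly the $A$-side shared by Types~I and~III. If instead $A\cup S\cong K_{q+1,q+1}\sm s_1s_2$, then since $s_1s_2$ was an edge of $K_{q+1,q+1}$ the vertices $s_1,s_2$ lie in distinct parts $X,Y$; writing $A_1=X\sm\{s_1\}$ and $A_2=Y\sm\{s_2\}$ gives $A\cong K_{q,q}$ with parts $A_1,A_2$, and after deleting $s_1s_2$ each $s_i$ is complete to the part opposite to it and to no vertex of its own part, that is, each $s_i$ is complete to a distinct $(q)$-stable set of $A$. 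This is the $A$-side shared by Types~II and~IV. Both translations are reversible, so the four combinations of the two possibilities for $A\cup S$ with the two possibilities for $B$ in (i) correspond precisely to Types~I, II, III, IV.

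For $|B|=1$, condition (iii) reads $N(s_i)=\{b,a_i,w\}$ and $A\cong K_{2q}\sm\{a_1a_2,wa_1,wa_2\}$ with $q\geq 3$. Recognising the latter graph as the clique $K_{2q}$ in which the triple $\{a_1,a_2,w\}$ has been made independent while remaining complete to $A\sm\{a_1,a_2,w\}\cong K_{2q-3}$ yields exactly the internal structure of $A$ in Type~V, and the prescribed neighbourhoods $N(s_i)=\{b,a_i,w\}$ supply the remaining defining data of Type~V (in particular they force $S$ to be independent and $b$ to be the sole vertex of $B$, consistent with $B\cong K_1$); conversely the defining conditions of Type~V unwind to (iii) in the same way. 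The argument therefore reduces to careful bookkeeping, and the only place demanding attention is the bipartite case $A\cup S\cong K_{q+1,q+1}\sm s_1s_2$: one must invoke the independence of $S$ from (ii) together with the fact that the deleted pair $s_1s_2$ was an edge of $K_{q+1,q+1}$ to conclude that $s_1$ and $s_2$ occupy different parts. Without this step the placement of $s_1,s_2$---and hence the identification with Type~II/IV rather than a degenerate configuration---would not be pinned down.
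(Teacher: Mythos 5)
Your proposal is correct and takes essentially the same route as the paper, which states that Proposition \ref{prop:member of F} ``follows directly from the definitions of Types I, II, III, IV, V'' and omits the verification entirely; you have simply written out that direct translation, including the one point genuinely worth recording (that in $K_{q+1,q+1}\sm s_1s_2$ the vertices $s_1,s_2$ must lie in distinct parts, which pins down the identification with Types II and IV). No gap.
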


\setcounter{claim}{0}
\begin{lem}\label{lem: G is ECE then G has five conf}
Let $G$ be a factor-critical graph and connectivity $2$.
If $G$ is $\ECE$-graph, then $G$ is a member of $\F$. 
%there exists a 2-cut $S=\{s_1,s_2\}$ such that $G -S$ has exactly two components $A$ and $B$, and the followings hold:
%\begin{itemize}
%\item[$(i)$] B is isomorphic to either $ K_{2p+1}$ or $K_{p,p+1}$. Moreover, for $p\geq 1$, if $B \cong K_{2p+1}$ (resp. $K_{p,p+1}$), then $S$ is partially-complete to $B$ (resp. $(p+1)$-stable set of $B$).
%\item[$(ii)$] $S$ is independent set. 
%\item[$(iii)$] If $\vert B \vert > 1$, then $A \cup \{s_1,s_2\}$ induces either $K_{2q+2} \sm \{s_1s_2\}$ or  $K_{q+1,q+1} \sm \{s_1s_2\}$ for $q\geq 1$. If $\vert B \vert = 1$, then for  $b \in B$, $a_i,w \in A$ and $i=1,2$, we have $N(s_i)= \{b,a_i,w\}$ and $A \cong K_{2q} \sm \{a_1a_2,wa_1,wa_2 \}$ for $q\geq 3$.
%\end{itemize}
\end{lem}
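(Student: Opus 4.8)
The plan is to begin from the coarse skeleton furnished by Favaron's structure theorem and then let edge-criticality rigidify every remaining degree of freedom until only the five configurations survive. By Remark \ref{rem:7 vertices} we may assume $G$ has at least $7$ vertices, so Theorem \ref{thm: Favaron connectivity 2 } applies to a fixed $2$-cut $S=\{s_1,s_2\}$ and yields exactly two components $A$ and $B$, with $|A|$ even and $|B|$ odd, where $B\in\{K_{2p+1},\,K_{2p+1}\sm b_1b_2,\,K_{p,p+1},\,K_{p,p+1}+b_1b_2\}$ and $A$ (resp.\ $A-\{a_1,a_2\}$) is a clique or a balanced complete bipartite graph. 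Rather than comparing $G$ to the pictures directly, I would verify the equivalent list of conditions in Proposition \ref{prop:member of F}, organising the argument as a sequence of claims and splitting on whether $|B|>1$ or $|B|=1$.

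For the case $|B|>1$, Favaron leaves undetermined the internal edges of $A\cup S$ and the presence of a decorating edge in $B$; I would pin these down with three claims. First, $S$ is completely attached to $A$, meaning each $s_i$ is adjacent to all of $A$ when $A\cong K_{2q}$, and $s_1,s_2$ are adjacent to distinct parts when $A\cong K_{q,q}$. The engine here is Lemma \ref{lem: defn equim}: applying Corollary \ref{cor:no-dom-edge} to an edge $\{s_i,a\}$ with $a\in A$ shows that $s_i$ has a private non-neighbour $b^{*}$ in $B$, and if in addition $s_i$ missed a vertex $a^{*}$ of $A$ (or of the wrong part), then $\{s_i,a^{*},b^{*}\}$ would be an independent triple whose deletion leaves a perfect matching, contradicting equimatchability. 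Second, using edge-criticality through Lemma \ref{lem: critical edge iff there is a matching}, I would show $B$ carries no extra or missing edge (so $B\cong K_{2p+1}$ or $K_{p,p+1}$) and that $S$ is partially-complete to $B$ (resp.\ to its $(p+1)$-stable set); the mechanism is a parity count on the odd graph $B$, where the saturating matching guaranteed by the lemma must leave exactly one $B$-vertex exposed, and the location of that vertex simultaneously fixes the adjacencies of $S$ into $B$ and forbids the decorating edge $b_1b_2$. Third, once $S$ covers all of $A\cup B$, an edge $s_1s_2$ would be a dominating edge, so $S$ is independent by Corollary \ref{cor:no-dom-edge}. These three claims identify $G$ with one of Types I--IV and hence verify Proposition \ref{prop:member of F} in this case.

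The case $|B|=1$ is where the genuinely new structure (Type V) must be produced, and I expect it to be the main obstacle. Writing $B=\{b\}$, $2$-connectivity forces $N(b)=\{s_1,s_2\}$, but Favaron only pins down $A-\{a_1,a_2\}$, so I must additionally (i) rule out the balanced-bipartite option for $A-\{a_1,a_2\}$, (ii) show each $s_i$ has degree exactly $3$ with $N(s_i)=\{b,a_i,w\}$ for a common vertex $w$, and (iii) prove that $\{a_1,a_2,w\}$ is independent and complete to the clique $A-\{a_1,a_2,w\}\cong K_{2q-3}$, i.e.\ $A\cong K_{2q}\sm\{a_1a_2,wa_1,wa_2\}$ with $q\ge 3$. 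The vertex $w$ has to be manufactured from the criticality constraints: with the large component $B$ gone, the slack previously used to saturate neighbourhoods disappears, so the matchings guaranteed by Lemma \ref{lem: critical edge iff there is a matching} for the edges $s_1b$ and $s_2b$ can close up only if $s_1$ and $s_2$ share a neighbour $w$ and are otherwise attached minimally. I would isolate $w$ as this forced common neighbour, then use Lemma \ref{lem: defn equim} to show that any extra edge inside $\{a_1,a_2,w\}$, or any missing edge between $\{a_1,a_2,w\}$ and the rest of $A$, would create an independent triple whose removal leaves a perfect matching, contradicting equimatchability.

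In all cases the derived graph is checked against Proposition \ref{prop:member of F}, which is by construction equivalent to membership in $\F$ (Figure \ref{fig:FC- ESE conn 2}). The two most delicate points, on which essentially the whole proof rests, are the Type V extraction of the vertex $w$ and the parity-based elimination of the decorated versions $K_{2p+1}\sm b_1b_2$ and $K_{p,p+1}+b_1b_2$ of $B$; both depend on carefully engineered saturating matchings rather than on any single clean invariant.
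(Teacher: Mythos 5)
Your skeleton matches the paper's: invoke Theorem \ref{thm: Favaron connectivity 2 }, verify the three conditions of Proposition \ref{prop:member of F}, and split on $|B|>1$ versus $|B|=1$. But the proposal has genuine gaps, and the common thread is that you never use Lemma \ref{lem: gen isolating matching}, which is the workhorse of the paper's argument: over and over one builds a minimal matching isolating a chosen vertex and concludes that the leftover graph must be $K_{2n}$ or $K_{n,n}$. The substitutes you propose are too weak at several points. (1) Corollary \ref{cor:no-dom-edge} applied to the edge $s_ia$ only produces \emph{some} vertex outside $N(\{s_i,a\})$, not a non-neighbour of $s_i$ lying in $B$; and even granting one, your independent triple $\{s_i,a^{*},b^{*}\}$ only contradicts Lemma \ref{lem: defn equim} if $B-b^{*}$ has a perfect matching, which fails when $B\cong K_{p,p+1}$ and $b^{*}$ lies in the $p$-stable set. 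The paper instead first shows via Lemma \ref{lem: critical edge iff there is a matching} that $s_i$ is not complete to $B$ (resp.\ its $(p+1)$-stable set), then isolates a suitable vertex $b$ of $B$ and applies Lemma \ref{lem: gen isolating matching} to force $G[A\cup\{s_i\}]\setminus\{a_{3-i}\}$ to be randomly matchable. (2) Your ``parity count'' for excluding $K_{2p+1}\setminus b_1b_2$ and $K_{p,p+1}+b_1b_2$ is a gesture, not an argument; the paper needs its Claim 1 (no vertex $w\in B\setminus N(S)$ can have $B-w$ with a perfect matching), whose proof again runs through the isolating-matching lemma and a non-criticality contradiction. (3) The assertion that $s_1s_2$ would be a dominating edge is false when $B\cong K_{p,p+1}$: the $p$-stable set of $B$ has no neighbour in $S$, so Corollary \ref{cor:no-dom-edge} does not apply; one needs the stronger fact that no matching through $s_1s_2$ saturates $N(\{s_1,s_2\})$, i.e.\ Lemma \ref{lem: critical edge iff there is a matching} plus a Hall-type count on $B$.

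The $|B|=1$ case is the largest gap. You correctly list what must be established, but the mechanism you offer --- that criticality of $s_1b$ and $s_2b$ forces a common neighbour $w$ --- is not sufficient: those two edges alone do not rule out $A-\{a_1,a_2\}\cong K_{q-1,q-1}$, nor the configurations in which $s_1$ and $s_2$ have distinct neighbours $w_1\neq w_2$ in $A-\{a_1,a_2\}$. The paper's treatment first argues $d(s_i)\geq 3$ and that $S$ has a neighbour in $A-\{a_1,a_2\}$ (otherwise one re-chooses the $2$-cut and falls back to the $|B|>1$ case), and then runs a four-way subcase analysis on $K_{q-1,q-1}$ versus $K_{2q-2}$ and on whether $w_1=w_2$ and $w_1w_2\in E(G)$, each subcase eliminated by a tailor-made isolating matching or a demonstrably non-critical edge. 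None of that is present in your sketch. In short: right decomposition and an honest flagging of the hard spots, but the decisive tool is missing and several of the shortcuts offered in its place are either insufficient or, in the $K_{p,p+1}$ situations, incorrect.
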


\begin{proof}
Suppose that $G$ is a factor-critical ECE-graph with connectivity 2. By Theorem \ref{thm: Favaron connectivity 2 }, there is a 2-vertex-cut $S=\{s_1,s_2\}$ such that $G- S$ has precisely two components $A$ and $B$ as described in items \textit{$(i)$} and \textit{$(ii)$} of Theorem \ref{thm: Favaron connectivity 2 }.  Let $s_1b_1,s_2b_2 \in E(G)$ for vertices $b_1,b_2 \in B$ (where $b_1$ and $b_2$ are distinct if $|B|>1$, and $b_1=b_2$ if $B=K_1$), and let $s_1a_1,s_2a_2 \in E(G)$ for distinct vertices $a_1,a_2 \in A$ (see Figure \ref{fig:FCE graph}). We will prove that $G$ satisfies the conditions $(i),(ii),(iii)$ in Proposition \ref{prop:member of F} to show that  $G$ has one of the five configurations in Figure \ref{fig:FC- ESE conn 2}.

\textit{$(i)$} If $\vert B \vert=1$, then $B=K_1$, and so the claim clearly holds. Thus we may assume $\vert B \vert >1$, and so $|B|\geq 3$. First, note that $A$ is either $K_{2q}$ or $K_{q,q}$  by Theorem \ref{thm: Favaron connectivity 2 }  \textit{$(ii)$}. In addition, we infer that $a_1a_2 \in E(G)$, since otherwise $A$ would be $K_{q,q}$ where $a_1$ and $a_2$ belong to the same $q$-stable set. However, extending $s_1a_1, s_2a_2$ into a maximal matching in $G$ leaves two vertices of $A$ exposed, contradicting to the equimatchability of $G$ by Corollary \ref{cor: def EFC}.

We now claim that for every $b \in N_B(\{s_1,s_2\})$,  $B-b$ is a randomly matchable graph. Without loss of generality, assume $bs_1 \in E(G)$.  Consider a matching $M_a$ containing $bs_1,s_2a_2$ and saturating all vertices but a vertex $a \in A$. This is a minimal matching isolating $a$. Thus, by Lemma \ref{lem: gen isolating matching}, the graph $G\sm (V(M_a)\cup \{a\})$ which is  the graph induced by $B-b$ is randomly matchable.
It then follows from Lemma \ref{lem:randomlymatchable} that $B-b$ is isomorphic to $K_{p,p}$ or $K_{2p}$ for every $b \in N_B(\{s_1,s_2\})$. Therefore, if $B$ is isomorphic to $K_{p,p+1}$ or $K_{p,p+1}+b_1b_2$ as described in  Theorem \ref{thm: Favaron connectivity 2 }, then $N_B(S)$ is included in the $(p+1)$-stable set of $K_{p,p+1}$. Moreover, if $B$ is $K_{2p+1}\setminus b_1b_2$, then $S$ has no neighbour in $B$ other than $b_1$ and $b_2$. 

\begin{claim}\label{cl: B dominated}
If there is a vertex $w \in B\sm N(S)$, then $B-w$ has no perfect matching.
\end{claim}
\begin{cproof}
Assume for a contradiction that there is a vertex $w\in B$ which is not adjacent to $S$ such that $B-w$ has a perfect matching $M$. Clearly, $M$ is a minimal matching isolating $w$, and therefore $A \cup S$ induces a connected randomly matchable graph in $G$ by Lemma \ref{lem: gen isolating matching}. That is, $A \cup S$ induces a graph which is isomorphic to $K_{2q+2}$ or  $K_{q+1,q+1}$ for some $q\geq 1$. It then follows that $s_1s_2 \in E(G)$ since $a_1a_2, a_1s_1, a_2s_2 \in E(G)$. In this case, however, we show that $s_1b_1 \in E(G)$ is not a critical edge. % let $b$ be a vertex of $ N_B(\{s_1,s_2\}  \sm \{b_1,b_2\}$ if $ \vert  N_B(\{s_1,s_2\} \vert \geq 3$ and $B$ is a $K_{2p+1}\setminus b_1b_2$, and one of $\{b_1,b_2\}$ otherwise. Say without loss of generality $s_1b\in E(G)$. Now,
Indeed, if $B\cong K_{p,p+1}$ or  $B\cong K_{p,p+1}+b_1b_2$, then $T=N(\{s_1,b_1\})$ contains the $(p)$-stable set of  $K_{p,p+1}$. On the other hand, $T$ contains either $A \cup \{s_2\}$ (which induces a $K_{2q+1}$) or the $(q+1)$-stable set of  $A \cup S$.  We then deduce that every matching of $G$ containing $s_1b_1$ leaves a vertex of $T$ exposed. It can be checked that the same holds if $B\cong K_{2p+1}$ or $B\cong K_{2p+1}\setminus b_1b_2$ (recall that in this case $N_B(\{s_1,s_2\})=\{b_1,b_2\}$). Consequently, there is no matching  containing $s_1b_1$ and saturating $T=N(\{s_1,b_1\})$, contradicting to the criticality of $s_1b_1$.
\end{cproof}\medskip

%\underline{Reducing possible graphs for $B$}:\\  
We have already noticed that if $B$ is $K_{2p+1}\setminus b_1b_2$ then $S$ has no neighbor in $B$ other than $b_1$ and $b_2$. So, Claim \ref{cl: B dominated} implies that $B$ is not isomorphic to $ K_{2p+1} \sm b_1b_2$ for $p\geq 2$ (note that the case $p=1$ corresponds to the graph $K_{1,2}$). We also note that $B$ is not isomorphic to $K_{p,p+1}+b_1b_2$ neither. 
 Indeed, $S$ has no neighbour in $(p)$-stable set of $K_{p,p+1}+b_1b_2$, but for a vertex $b$ in  $(p)$-stable set of $K_{p,p+1}+b_1b_2$, the graph $(K_{p,p+1}+b_1b_2)-b$ has obviously a perfect matching, it contradicts Claim \ref{cl: B dominated}. Hence $B$ is not isomorphic to $K_{p,p+1}+b_1b_2$.
 It then follows from Claim \ref{cl: B dominated} that either $B \cong K_{2p+1}$ and every vertex of $B$ is adjacent to at least one of $s_1,s_2$, or $B \cong K_{p,p+1}$ and every vertex of $(p+1)$-stable set of $B$ is adjacent to at least one of $s_1,s_2$. 

To complete the proof, it remains to show that $s_1$ and $s_2$ has no common neighbour in $B$. To this end, we first enlighten the links between $S$ and $A$ as follows:  

\begin{claim}\label{cl: A cup S}
Both $A\cup \{s_1\}$ and  $A\cup \{s_2\}$ induce either $K_{2q+1}$ or $K_{q,q+1}$.
\end{claim}

\begin{cproof}
We first claim that none of  $s_1$ or $s_2$ is complete to $B\cong K_{2p+1}$ or the $(p+1)$-stable set of $B\cong K_{p,p+1}$. Assume for a contradiction that $s_1$ is complete to $B\cong K_{2p+1}$ or the $(p+1)$-stable set of $B\cong K_{p,p+1}$, then the edge $s_1a_1$ is not critical. Indeed, $N_{G\sm s_1a_1}(\{s_1,a_1\})$ contains all vertices of $B\cong K_{2p+1}$ or the $(p+1)$-stable set of $B\cong K_{p,p+1}$, as well as $N_A(a_1)$ which is either an odd clique $K_{2q-1}$ or the $q$-stable set of $A$. In all cases, there is no matching  containing $s_1a_1$ and saturating $N(\{s_1,a_1\})$, contradicting that $s_1a_1$ is a critical edge in $G$ by Lemma \ref{lem: critical edge iff there is a matching}. 
Thus, for each $i=1,2$, there is a vertex $b \in B$ such that $s_ib \notin E(G)$ where $b$ is a vertex in $B\cong K_{2p+1}$ or the $(p+1)$-stable set of $B\cong K_{p,p+1}$.
%Moreover, if $B$ is $K_{p,p+1}$ or  $K_{p,p+1} + b_1b_2$, then there are at least two vertices, say $b_1',b_2'$, such that $b_1's_1,b_2's_2 \notin E(G)$, since otherwise $\not \exists$ a matching saturating $N(\{s_i,a_i\})$ for $i=1,2$. 
This implies that there exists a vertex $b \in B$ with $b\in N(s_2)\sm N(s_1)$ such that  $B-b$ has a perfect matching $P$. In addition, $P\cup \{s_2a_2\}$ is a matching isolating $b$. It then follows from Lemma \ref{lem: gen isolating matching} that $G[A \cup \{s_1\}] \sm \{a_2\}$ is a randomly matchable graph. By symmetry, $G[A \cup \{s_2\}] \sm \{a_1\}$ is randomly matchable as well. Moreover, any vertex  $a\in N_A(s_1)$ (or $a\in N_A(s_2)$) can play the role of  $a_1$, i.e., the graph $G[A \cup \{s_2\}] \sm \{a\}$ is randomly matchable graph for every $a\in N_A(s_1)$. Likewise, the graph $G[A \cup \{s_1\}] \sm \{a\}$ is randomly matchable graph for every $a\in N_A(s_2)$.
This implies the following: if $s_1$ and $s_2$ have a common neighbour in $A$ then each of the sets $A\cup \{s_1\}$ and $A\cup \{s_2\}$ induce cliques of size $2q+1$; if $s_1$ and $s_2$ have no common neighbour in $A$ then each of the sets $A\cup \{s_1\}$ and $A\cup \{s_2\}$ induce $K_{q,q+1}$, and  $s_1a_2, s_2a_1 \notin E(G)$.  
\end{cproof}

Let us now show that $s_1$ and $s_2$ has no common neighbour in $B$. Assume for a contradiction that there is a vertex $b \in B$ such that $s_1b,s_2b \in E$. In this case, we show that the edge $s_1b$ is not critical. Let $R=N_{G\sm s_1b}(\{s_1,b\})$ contains  either $B-b$ (when $B$ is an even clique) or $(p)$-stable set of $B\cong K_{p,p+1}$.   Moreover, $R$ contains either $A \cup \{s_2\}$ (when $A$ is an even clique) or  $q+1$-stable set of  $A\cup \{s_2\}$. In all cases, one can easily check that there is no  matching containing $s_1b$ and  saturating $N(\{s_1,b\})$, contradicting to the criticality of $s_1b$ by Lemma \ref{lem: critical edge iff there is a matching}.\medskip

%Eski hali su:
%$N$ contains also either $A \cup \{s_2\}$ or  $(q+1)$-stable set of  $A \cup S$. So, there is no a matching containing $s_1b$ and saturating $N(\{s_1,b\})$, contradicting the criticality of $s_1b$.

\textit{$(ii)$}  If $s_1s_2 \in E(G)$, then by the statement (i), $N(\{s_1,s_2\})$ contains $B$ or the $(p+1)$-stable set of $B$ when $B \cong K_{2p+1}$ or $B \cong K_{p,p+1}$, respectively. Thus, there is no matching containing $s_1s_2$ and  saturating $N(\{s_1,s_2\})$, implying that $s_1s_2$ is not a critical edge by Lemma \ref{lem: critical edge iff there is a matching}. It follows that $s_1s_2 \notin E(G)$.\medskip

\textit{$(iii)$} We shall prove this item under two main cases with respect to the size of $B$.\medskip

\textbf{\textit{Case 1:}} $\vert B \vert > 1$.

As we have already shown in Claim \ref{cl: A cup S} that   $G[A \cup \{s_1,s_2\}]$ is either $K_{q+1,q+1} \sm s_1s_2$ or  $K_{2q+2} \sm s_1s_2$.\medskip

\textbf{\textit{Case 2:}} $\vert B \vert = 1$.

Let $b$ be the unique vertex in $B$. By Theorem \ref{thm: Favaron connectivity 2 }-(ii), $A'=A-\{a_1,a_2\}$ is either $K_{q-1,q-1}$ or $K_{2q-2}$. Recall that $q\geq 2$ by Remark \ref{rem:7 vertices}. We first note that if $s_1$ or $s_2$ has only two neighbours in $G$, say $d_G(s_1)=2$, then it burns down to the Case 1 as we take the 2-cut $S=\{a_1,s_2\}$. Thus, $d_G(s_i)\geq 3$ for $i=1,2$.
Similarly, if none of $s_1,s_2$ has a neighbour in $A'$, then it boils down to the Case 1 as we take the 2-cut $S=\{a_1,a_2\}$. Thus there exists $w\in N_{A'}(S)$. We then deduce that if $s_1$ or $s_2$ has no neighbour in $A'$, say this is $s_1$, then  $a_2$ would be a common neighbour of $s_1$ and $s_2$  since $d(s_1)\geq 3$ and $s_1s_2 \notin E$ by the item $(ii)$. It follows that for every pair $x,y\in \{a_1,a_2,w\}$ there exists a matching which saturates $\{s_1,s_2,x,y\}$ and isolates $b$; thus $A-\{x,y\}$  is randomly matchable by Lemma \ref{lem: gen isolating matching}. 
%old version: It follows that for each vertex $x\in \{a_1,a_2,w\}$, there exists a matching isolating $b$, which leaves $x$ exposed. This means that for each pair $x,y \in \{a_1,a_2,w\}$, the graph $A-\{x,y\}$ is randomly matchable by Lemma \ref{lem: gen isolating matching}. 
 Hence we may exchange the roles of  $a_2$ and $w$ as we desired. That is, if we define  $A'=A-\{a_1,w\}$, then both $s_1$ and $s_2$ would have a neighbour in $A'$.  
 It follows that $s_1$ and $s_2$ has a common neighbor in $A'$ (and $s_1$ has no other neighbor in $A'$). The only remaining case is if both $s_1$ and $s_2$ have distinct neighbors in $A'$. 
 
Combining the two cases, in what follows, we assume that each of $\{s_1,s_2\}$ has at least one neighbour in $A'$. So there exist $w_1,w_2 \in A'$ such that $w_1 \in N(s_1)$ and $w_2 \in N(s_2)$  (with possibly $w_1= w_2$). Since $\{s_1w_1,s_2a_2\}$ is a matching isolating $b$, the graph $G[A' \cup \{a_1\}] - \{w_1\}$ is randomly matchable by Lemma \ref{lem: gen isolating matching}. Similarly, $G[A' \cup \{a_2\}] - \{w_2\}$  is randomly matchable since $\{s_1a_1,s_2w_2\}$ is a matching isolating $b$.
It then follows that if $A'$ is isomorphic to $K_{q-1,q-1}$ with a bipartition $R$ and $T$, then  for each $i=1,2$, we deduce that both $a_i$ and $w_i$ are complete to either $R$ or $T$. That is, all neighbours of $s_i$ in $A$ are complete to either $R$ or $T$. Similarly, if  $A'$ is isomorphic to  $K_{2q-2}$, then  for each $i=1,2$, we deduce that $a_i$ is complete to $A'-w_i$.

%\textbf{\textit{Case 2:}} Let $\vert B \vert = 1$.
%
%Let $b$ be the unique vertex in $B$. By Theorem \ref{thm: Favaron connectivity 2 }-(ii), $A'=A-\{a_1,a_2\}$ is randomly matchable, so $A'=K_{q-1,q-1}$ or $A'=K_{2q-2}$. Notice that $q\geq 2$ by Remark \ref{rem:7 vertices}. 
% If $s_1$ or $s_2$ has no neighbour in $A'$, say $s_1$, then it turns to the Case 1 as we take take $S=\{a_1,s_2\}$. Similarly, if none of $s_1,s_2$ has a neighbour in $A'$, then it again turns to the Case 1 as we take take $S=\{a_1,a_2\}$. Thus we may assume that each of $\{s_1,s_2\}$ has at least one neighbour in $A'$. So there exist $w_1,w_2 \in V(A')$ such that $w_1 \in N(s_1)$ and $w_2 \in N(s_2)$. Since $\{s_1w_1,s_2a_2\}$ is a matching isolating $b$, the graph $G[V(A') \cup \{a_1\}] - \{w_1\}$ is randomly matchable by Lemma \ref{lem: gen isolating matching}. Similarly, $G[V(A') \cup \{a_2\}] - \{w_2\}$  is randomly matchable.
%It then follows that if $A'$ is isomorphic to $K_{q-1,q-1}$ with a bipartition $R$ and $T$, then  for each $i=1,2$, we deduce that both $a_i$ and $w_i$ are complete to either $R$ or $T$. That is, all neighbours of $s_i$ in $A$ are complete to either $R$ or $T$. Similarly, if  $A'$ is isomorphic to  $K_{2q-2}$, then  for each $i=1,2$, we deduce that $a_i$ is complete to $A'-w_i$. 
%
%
%

In the remaining of the proof, we distinguish all possible cases according to $A'=K_{q-1,q-1}$ and $A'=K_{2q-2}$ and vertices $w_1,w_2,s_1,s_2$. We will see that there is no ECE-graph where  $A'=K_{q-1,q-1}$, and the only possible configuration of an ECE-graph with $A'=K_{2q-2}$ corresponds to Type V (see Figure \ref{fig: FC- ESE graph (e)}).  \medskip

\textbf{\textit{ $\quad$ Subcase 2.1:}} Suppose that $A' \cong K_{q-1,q-1}$, and there exists $w_i \in N(s_i)$ for each $i=1,2$ such that  $w_1 \neq w_2$ and $w_1w_2 \in E(G)$.

We claim that $s_1a_2, s_2a_1  \notin E(G)$. Assume for a contradiction that  without loss of generality $s_1a_2 \in E(G)$. Then $\{s_1a_2,s_2w_2\}$ is a matching isolating $b$. By Lemma \ref{lem: gen isolating matching},  the remaining graph $H=G[A' \cup \{a_1\}] - \{w_2\}$ must be randomly matchable. We then say that $a_1$ and $w_2$ are complete to the same $(q-1)$-stable set of $A'$. Recall also that  $a_1,w_1 \in V(H)$ are complete to the same $(q-1)$-stable set of $A'$, and $w_1w_2\in E(G)$. This implies that $H$ is not randomly matchable whenever $q>2$. If however $q=2$, i.e, $A'=\{w_1,w_2\}$, then $H$ being randomly matchable implies that $a_1w_1 \in E(G)$. Now, noting that $a_1w_2 \in E(G)$, the edge $a_1s_1$ is not critical in $G$ since all vertices $w_1,w_2,a_2,b$ belong to  $N(\{a_1,s_1\}) \sm \{a_1,s_1\}$ whereas there is no matching in $G \sm \{a_1,s_1\}$ saturating $\{w_1,w_2,a_2,b\}$, a contradiction by Lemma \ref{lem: critical edge iff there is a matching}. Thus, we conclude that $s_1a_2 \notin E(G)$. By symmetry, we also have $s_2a_1 \notin E(G)$. 
Moreover, $a_1$ is adjacent to $a_2$, since otherwise for a perfect matching $M$ in $A'-\{w_1,w_2\}$, the set $M \cup \{s_1w_1,s_2w_2\} $ is a  perfect matching in $G \sm \{a_1,a_2,b\}$ where $ \{a_1,a_2,b\}$ is an independent set, it is contradiction to the equimatchability of $G$ by Lemma \ref{lem: defn equim}. 

Now, assume $A'$ has a third vertex $w_2'\in N(s_2)$  with $w_2'\notin\{w_1, w_2\}$, then there is also $w_1'$ in $A$ since  $A' \cong K_{q-1,q-1}$. Then, $a_1$ is adjacent to both $w_2$ and $w_2'$, similarly $a_2$ is adjacent to both $w_1$ and $w_1'$. Now we claim that the vertex $a_2$ has no neighbor in the stable set $R$ of $A' $ containing $w_2$ and $w_2'$. Indeed, if $a_2w\in E(G)$ for some $w\in R$ (which is not necessarily a neighbor of $s_2$) then letting without loss of generality $w'_2$ to be a neighbor of $s_2$ different from $w$, the matching $\{s_1a_1,s_2w_2'\}$ isolates $b$. However, the remaining graph $G-\{b,s_1,s_2,a_1,w_2'\}$ is not randomly matchable since the vertices $w_1,w,a_2,w'_1$ induce $K_4-w_1w_1'$ in $G$, a contradiction to the equimatchability of $G$ by Lemma \ref{lem: gen isolating matching}. 
Thus, $R\cup \{a_2\}$ is a stable set in $G$. In this case however, the edge $s_1a_1$ is not critical because there is no matching containing $s_1a_1$ and  saturating $N=N(\{s_1,a_1\})$; indeed $N$ contains the independent set $R \cup \{b,a_2\}$ of size $q+1$ while $G\sm \{s_1,a_1\}$ has $2q+1$ vertices. We therefore conclude that for $i=1,2$, each $s_i$ is adjacent to only $w_i$ in $A'$. 

%{\color{blue} Note that $a_2$ is not adjacent to any vertex in the partition set of $A'$ containing $w_2'$ since $\{s_1w_1,s_2w_2\}$ is a matching isolating $b$, the graph $G[V(A) - \{w_1,w_2\}]$ is randomly matchable by Lemma \ref{lem: gen isolating matching}. By the same reason, $a_2$ is not adjacent to any vertex in the partition set of $A'$ containing $w_1'$. On the other hand, since all neighbours of $s_2$ have the same role in $A'$, $s_2$ has no neighbour in in the partition set of $A'$ containing $w_1'$.} {\color{blue} Moreover, we claim that the vertex $a_2$ is not adjacent to $w_2$. Indeed, if $a_2w_2 \in E(G)$ and $w_2'\in N(s_2)$, then $\{s_1a_1,s_2w_2'\}$ is a matching isolating $b$. However, the remaining graph $G-\{b,s_1,s_2,a_1,w_2'\}$ is not randomly matchable since the vertices $w_1,w_2,a_2,w'_1$ induce $K_4-w_1w_1'$ in $G$, a contradiction to the equimatchability of $G$ by Lemma \ref{lem: gen isolating matching}. Besides, if $a_2w_2 \in E(G)$ and $w_2'\notin N(s_2)$, then we take a matching $M$ containing $a_2w_2$ and isolating $s_2$. The remaining graph $G-V(M)$ is not randomly matchable since the vertices $w_1',w_2',a_1,w_1$ does not induce a clique or a complete bipartite graph in $G$, a contradiction to the equimatchability of $G$ by Lemma \ref{lem: gen isolating matching}.}. 

We next claim that $w_1a_1,w_2a_2 \in E(G)$. Assume it is not true and let without loss of generality $w_2a_2 \notin E(G)$. In this case, the edge $s_1a_1$ is not critical. To show this, let us first note that $a_2$ has no neighbor in the $(q-1)$-stable set $R$ of $A'$ containing $w_2$ since $\{s_1w_1,s_2w_2\}$ is a matching isolating $b$, the graph $G[A - \{w_1,w_2\}]$ is randomly matchable by Lemma \ref{lem: gen isolating matching}. Now, $N=N(\{s_1,a_1\})$ contains the stable set $R\cup\{b,a_2\}$ of size $(q+1)$ while $G \sm \{a_1,s_1\}$ has $2q+1$ vertices. So, there is no matching containing $s_1a_1$ and  saturating $N$, a contradiction by Lemma \ref{lem: critical edge iff there is a matching}.

Lastly, we show that the obtained graph is not ECE as follows. The matching $\{w_1a_1,bs_2\}$ is a matching isolating $s_1$, however, the remaining graph $H=G-\{w_1,a_1,b,s_2,s_1 \}$ is isomorphic to $K_{q-2,q}+a_2w_2$. Clearly, $H$ is not randomly matchable if $q>2$  which yields a contradiction to the equimatchability of $G$ by Lemma \ref{lem: gen isolating matching}. In addition, if $q=2$, then $a_1s_1$ is not a critical edge, since $N(\{a_1,s_1\})\sm \{a_1,s_1\}$ consists of the vertices $w_1,w_2,a_2,b$ which cannot be saturated by any matching in $G \sm a_1s_1$. We conclude that there is no such type of ECE-graph. \medskip

\textbf{\textit{ $\quad$ Subcase 2.2:}} Suppose that $A' \cong K_{q-1,q-1}$, and there exists $w_i \in N(s_i)$ for each $i=1,2$ such that  $w_1 \neq w_2$ and $w_1w_2 \notin E(G)$.

Consider the graph $A' \cong K_{q-1,q-1}$ with a bipartition $R$ and $T$, and let without loss of generality $w_1,w_2 \in R$ and take $w_1',w_2' \in T$.  If $a_1a_2 \in E(G)$, then for a perfect matching $M$ in $A'-\{w_1,w_1',w_2,w_2'\}$,   the set $M\cup \{s_1w_1,s_2w_2,a_1a_2\}$ is a perfect matching in $G \sm \{w_1',w_2',b\}$ where $ \{w_1',w_2',b\}$ is a stable set,  it is a contradiction to the equimatchability of $G$ by Lemma \ref{lem: defn equim}. Thus $a_1a_2 \notin E(G)$. Besides, we observe that if none of the edges $w_1a_1$ and $w_2a_2$ is present, then $G$ is bipartite with bipartition $T\cup \{s_1,s_2\}$ and $R \cup \{a_1,a_2,b\}$. However, bipartite graphs are not factor-critical, a contradiction. So we conclude  that one of $w_1a_1,w_2a_2$ appears in $G$. If both of them are present, then for a perfect matching $M$ in $A'-\{w_1,w_1',w_2,w_2'\}$, the set  $M\cup \{w_1a_1,w_2a_2,bs_2\}$ is a perfect matching in $G \sm \{w_1',w_2',s_1\}$ where $\{w_1',w_2',s_1\}$ is a stable set, contradicting that $G$ is equimatchable by Lemma \ref{lem: defn equim}. Therefore precisely one of $w_1a_1$ or $w_2a_2$ appears on the graph $G$, without loss of generality assume $w_1a_1 \notin E(G)$ and $w_2a_2 \in E(G)$. Then, we claim that $a_1s_2 \notin E(G)$, since otherwise $\{a_1s_2,w_1s_1\}$ is a matching isolating $b$ whereas the remain graph $G-\{a_1,s_2,w_1,s_1,b\}$ is not randomly matchable since it contains the edge $w_2a_2$, a contradiction by  Lemma \ref{lem: gen isolating matching}.
Likewise, if $s_2$ is adjacent to $w_1$, then $\{a_1s_1,w_1s_2\}$ is a matching isolating $b$ whereas the remain graph $G-\{a_1,s_1,w_1,s_2,b\}$ is not randomly matchable,  a contradiction by  Lemma \ref{lem: gen isolating matching}. Therefore $\{w_1,a_1,s_2\}$ is a stable set. Then, for a perfect matching $M$ in $A'-\{w_1,w_1'\}$, the set $M\cup \{w_1'a_2,bs_1\}$ is a perfect matching in $G \sm \{w_1,a_1,s_2\}$, contradicting that $G$ is equimatchable by Lemma \ref{lem: defn equim}. Consequently, there is no such type of ECE-graph. \medskip

\textbf{\textit{ $\quad$ Subcase 2.3:}} Suppose that $A' \cong K_{q-1,q-1}$, and the vertices $s_1,s_2$ have a unique neighbour $w_1 = w_2=w$ in $A'$.

Consider the graph $A' \cong K_{q-1,q-1}$ with a bipartition $R$ and $T$, and let  $w \in R$. Then, for $i=1,2$ each $a_i$ is adjacent to all vertices in $T$. Besides, if $w$ is adjacent to none of $a_1$ and $a_2$, then $G$ induces a bipartite graph with a bipartition $T\cup \{s_1,s_2\}$ and $R \cup \{a_1,a_2,b\}$. However, bipartite graphs are not factor-critical, a contradiction. So there exists at least one edge between $w$ and $\{a_1,a_2\}$, without loss of generality assume $wa_1 \in E(G)$. Then, there is no matching containing $a_1w$ and  saturating $N=N(\{a_1,w\})$ since $N$ includes $T \cup \{s_1,s_2\}$ which is an independent set of size $q+1$ whereas $G \sm \{a_1,w\}$ has $2q+1$ vertices, a contradiction to the fact that the edge $wa_1$ is critical by Lemma \ref{lem: critical edge iff there is a matching}. Hence, $s_1$ and $s_2$ has no common neighbour in $A'$. Consequently, there is no such type of ECE-graph. \medskip

\textbf{\textit{ $\quad$ Subcase 2.4:}} Let $A' \cong K_{2q-2}$ for $q\geq 2$.

Recall first that for each $i=1,2$, $a_i$ is complete to $A'-w_i$.  Let us first show that $a_1a_2 \notin E(G)$. Indeed, if $a_1a_2 \in E(G)$, then $N(\{s_1,a_1\})$ contains all vertices in $V(G)\sm \{s_2\}$. It can be observed that  there is no matching containing $s_1a_1$ and  saturating $N(\{s_1,a_1\})$,
%Since $G$ is of odd size, and $b$ can be matched with only $s_2$, we conclude that  there is no matching saturating $N$ in $G-\{s_1a_1\}$, 
a contradiction to the criticality of the edge $s_1a_1$ by Lemma \ref{lem: critical edge iff there is a matching}.  We next claim that each vertex of $s_1,s_2$ has a unique neighbour in $A'$, and they are the same. Indeed, if $w_1\neq w_2$, then $\{s_1w_1,s_2w_2\}$ would be a matching isolating $b$. However, $A \sm \{w_1,w_2\}$ is not randomly matchable since $a_1a_2 \notin E(G)$, a contradiction by Lemma \ref{lem: gen isolating matching}. Hence the vertices $s_1,s_2$ have a unique neighbour in $A'$, say $w=w_1=w_2$. If one of the edges $a_1w,a_2w$ is present in $G$, say $a_1w\in E(G)$, then this edge is not critical since there is no matching containing $a_1w$ and  saturating $N(\{a_1,w\})$, %since $N$ contains $(A'-w_1)\cup \{s_1,s_2\}$, 
a contradiction by Lemma \ref{lem: critical edge iff there is a matching}. Therefore $\{a_1,a_2,w\}$ is a stable set. Lastly, observe that if $A'-w$ consists of a single vertex, then $G$ induces a bipartite graph. However, bipartite graphs are not factor-critical. Hence  $q \geq 3$  and  $A'-w \cong K_{2q-3}$ (see Figure \ref{fig: FC- ESE graph (e)}). 
\end{proof}

By combining Lemmas \ref{lem:All graphs belonging to F are ECE} and \ref{lem: G is ECE then G has five conf}, and noting that factor-critical $\ECE$-graphs have at least 7 vertices by Remark \ref{rem:7 vertices}, we obtain the characterization of factor-critical ECE-graphs with connectivity $2$ given in Theorem \ref{thm:chrc of ECE conn 2}. 

%%%%%%%%%%%%%%%%%%%%%%%%%%%%%
\section{An overview of subclasses of equimatchable graphs}\label{sec:compare}
%%%%%%%%%%%%%%%%%%%%%%%%%%%%
Let us give a comparison of $\VCE$-graphs and $\ECE$-graphs with other subclasses of equi-matchable graphs that are well-studied in the literature, namely factor-critical equimatchable  (EFC) graphs and edge-stable equimatchable (ESE) graphs (see Figure \ref{fig: ECE VCE}). To this end, we define the following disjoint families of graphs;

\begin{itemize}
\item[-] $\mathcal{A}$ is the class of EFC-graphs admitting a vertex $v$ such that $G- v$ is isomorphic to $K_{2r}$  for some integer $r\geq 2$ and  $2\leq d(v) \leq 2r-2$. 
\item[-] $\mathcal{B}$ is the class of EFC-graphs admitting a vertex $v$ such that $G- v$ is isomorphic to $K_{r,r}$ for some integer $r\geq 2$ where $v$ is adjacent to at least two adjacent vertices in $K_{r,r}$, and $v$ has at least one non-neighbor in each one of the $(r)$-stable sets of $K_{r,r}$. 
\item[-] $\mathcal{C}$ is the class of graphs  which are isomorphic to $K_{3}$, $K_{2r+1}$ or $K_{2r+1}\setminus e$ for an edge $e\in E(K_{2r+1})$  for some integer $r\geq 2$. 
\item[-] $\mathcal{D}$ is the class of EFC-graphs admitting a vertex $v$ such that $G- v$ is isomorphic to $K_{r,r}$  for some integer $r\geq 2$ where $v$ is adjacent to at least two adjacent vertices in $K_{r,r}$, and $v$ is complete to an $(r)$-stable set of $K_{r,r}$. 
\item[-] $\mathcal{E}$ is the class of EFC-graphs with a cut vertex. 
\end{itemize}

Note that an EFC-graph must be connected since it is factor-critical. Thus, EFC-graphs consists of two parts; EFC-graphs with a cut vertex and 2-connected EFC-graphs. We observe that all graphs in $\mathcal{A} \cup \mathcal{B} \cup \mathcal{C} \cup \mathcal{D}$ are 2-connected.

We first investigate the relation of the graph class ESE with the above defined graph classes. It has been shown in \cite{ESE} that $\ESE$-graphs are either 2-connected factor-critical or bipartite.

\begin{prop}\label{prop:ESE-ABCDE}
We have $(\mathcal{A}\cup \mathcal{B} \cup \mathcal{E})\cap \ESE = \emptyset$ and $(\mathcal{C}\cup \mathcal{D})\subseteq \ESE$.
\end{prop}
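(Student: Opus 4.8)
The plan is to prove the two assertions separately, using Lemma~\ref{lem: critical edge iff there is a matching} in two complementary ways: to show a graph is \emph{not} $\ESE$ it suffices to exhibit a single critical edge, and to show a graph \emph{is} $\ESE$ it suffices to check (since every member of $\mathcal{C}\cup\mathcal{D}$ is already equimatchable) that no edge is critical. Throughout I would also invoke the dichotomy recalled from \cite{ESE} that every $\ESE$-graph is $2$-connected factor-critical or bipartite. All graphs involved have an odd number of vertices, so the recurring obstruction ``a matching cannot saturate $V(G)$'' is available.

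For the emptiness statement, the class $\mathcal{E}$ is dispatched structurally: a graph in $\mathcal{E}$ has a cut vertex, hence is not $2$-connected, and being factor-critical it is not bipartite, so by the dichotomy it is not $\ESE$. For $\mathcal{A}$ and $\mathcal{B}$ I would produce a critical edge, exploiting that by hypothesis $v$ has at least two non-neighbours in $G-v$. In the $\mathcal{A}$ case, choosing two non-neighbours $w_i,w_j$ of $v$ in the clique $K_{2r}$ gives $N(\{w_i,w_j\})=V(G-v)$ (it omits $v$), so $w_iw_j$ together with a perfect matching of the remaining $K_{2r-2}$ is a matching containing $w_iw_j$ and saturating $N(\{w_i,w_j\})$; by Lemma~\ref{lem: critical edge iff there is a matching}, $w_iw_j$ is critical. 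In the $\mathcal{B}$ case, choosing a non-neighbour $x_0$ of $v$ in one part of $K_{r,r}$ and a non-neighbour $y_0$ in the other gives $N(\{x_0,y_0\})=V(G-v)$, and $x_0y_0$ extended by a perfect matching of the remaining $K_{r-1,r-1}$ witnesses the criticality of $x_0y_0$. Hence $(\mathcal{A}\cup\mathcal{B}\cup\mathcal{E})\cap\ESE=\emptyset$.

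For the containment $\mathcal{C}\cup\mathcal{D}\subseteq\ESE$, I would argue as follows. For $\mathcal{C}$: deleting one edge from $K_3$ yields $P_3$, which is trivially equimatchable; deleting one edge from $K_{2r+1}$ or from $K_{2r+1}\setminus e$ yields $K_{2r+1}$ minus at most two edges, a graph with at most two non-edges, hence independence number at most $2$, and one checks (treating separately the cases where the two missing edges are incident or vertex-disjoint, and the small case $r=2$) that it remains factor-critical; by Lemma~\ref{lem: defn equim} it is therefore equimatchable. For $\mathcal{D}$, write $G-v=K_{r,r}$ with parts $X,Y$, where $v$ is complete to $X$ and has at least one neighbour in $Y$, and show every edge is non-critical. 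For an $X$--$Y$ edge or a $v$--$X$ edge one computes $N(\{u,w\})=V(G)$, and since $|V(G)|$ is odd no matching saturates it, so the edge is non-critical by Lemma~\ref{lem: critical edge iff there is a matching}. For a $v$--$y$ edge with $y\in N(v)\cap Y$, the neighbourhood $N(\{v,y\})$ contains the stable set $X$ of size $r$; once $vy$ is used, each vertex of $X$ can only be matched into $Y\setminus\{y\}$, which has only $r-1$ vertices, so $X$ cannot be saturated and the edge is again non-critical. Hence $G\in\ESE$.

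The main obstacle is the analysis of the $v$--$Y$ edges in $\mathcal{D}$ precisely when $v$ is \emph{not} complete to $Y$: there the easy ``dominating-edge'' obstruction $N(\{u,w\})=V(G)$ fails, and the argument must instead use the unsaturable stable set $X$ of size $r$. A secondary delicate point is the factor-criticality verification in $\mathcal{C}$ when the two deleted edges share a vertex, together with checking the smallest instance $K_5$; once these are in place, the remaining verifications reduce to direct counting via Lemmas~\ref{lem: critical edge iff there is a matching} and~\ref{lem: defn equim}.
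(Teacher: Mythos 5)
Your proof is correct. The paper treats the same five classes but leans on two results imported from \cite{ESE}: the characterization that an EFC-graph is edge-stable if and only if it contains no induced $\overline{P_3}$ whose deletion leaves a graph with a perfect matching, and Theorem~12 of \cite{ESE} for $K_{2r+1}$ and $K_{2r+1}\setminus e$. Your negative direction for $\mathcal{A}$ and $\mathcal{B}$ is the same argument in different clothing: the two non-neighbours $w_i,w_j$ of $v$ together with $v$ form exactly the induced $\overline{P_3}$ that the paper exhibits, and your matching $w_iw_j$ plus a perfect matching of the remaining $K_{2r-2}$ or $K_{r-1,r-1}$ is precisely the witness required by Lemma~\ref{lem: critical edge iff there is a matching}; the $\mathcal{E}$ case is handled identically via the dichotomy for $\ESE$-graphs. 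Where you genuinely diverge is the positive containment. For $\mathcal{D}$ the paper simply asserts that $G$ has no induced $\overline{P_3}$ because $v$ is complete to an $(r)$-stable set --- a claim that is in fact slightly inaccurate when $v$ is not complete to the other part $Y$, since $\{v,y,y'\}$ with $y\in N_Y(v)$ and $y'\in Y\setminus N(v)$ does induce a $\overline{P_3}$ (its deletion leaves the unbalanced $K_{r,r-2}$, so the characterization still yields the right conclusion) --- whereas you verify non-criticality edge by edge using Lemma~\ref{lem: critical edge iff there is a matching}, with the stable set $X$ of size $r$ supplying the obstruction exactly in the problematic $v$--$Y$ case. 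For $\mathcal{C}$ you reprove the edge-stability of $K_{2r+1}$ and $K_{2r+1}\setminus e$ from scratch via independence number $2$, factor-criticality, and Lemma~\ref{lem: defn equim} rather than citing it. Your route is longer but self-contained within the paper's own lemmas, and it sidesteps the small imprecision in the paper's treatment of $\mathcal{D}$.
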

\begin{proof}
Let us first show that no graph in $\mathcal{A}\cup \mathcal{B}$ is ESE.
Recall that an equimatchable graph $G$ is edge-stable if $G \setminus e$ is also equimatchable for any $e \in E(G)$.
In \cite{ESE}, it has been proved that an EFC-graph is edge-stable if and only if there is no induced $\overline{P_3}$ in $G$ such that $G \setminus \overline{P_3}$ has a perfect matching where $\overline{P_3}$ is the complement of a path on 3 vertices. Consider a graph $G\in \mathcal{A}\cup \mathcal{B}$ with the vertex $v$ as described in the definitions of the families $\mathcal{A}$ and $\mathcal{B}$.  Then there exist two adjacent non-neighbors $x$ and $y$ of $v$ such that   $\{v,x,y\}$ induces a $\overline{P_3}$ in $G$, and $G\setminus \overline{P_3}$ has a perfect matching (since $G\setminus \overline{P_3}$ is isomorphic  to $K_{2r-2}$ or $K_{r-1,r-1}$). Therefore, $(\mathcal{A}\cup \mathcal{B})\cap \ESE =\emptyset$. Besides, ESE-graphs with a cut-vertex are bipartite \cite{ESE}, thus not factor-critical. It follows that $\mathcal{E}\cap \ESE = \emptyset$.

On the other hand, for $r\geq 2$, a clique $K_{2r+1}$ and $K_{2r+1}\setminus xy$ are ESE by Theorem 12 in \cite{ESE} as well as $K_3$ which is an ESE-graph. So every graph in $\mathcal{C}$ is an ESE-graph. Besides, if $G$ is a graph in $\mathcal{D}$ with the vertex $v$ as stated, the graph $G$ has no $\overline{P_3}$ as an induced subgraph since $v$ is complete to an $(r)$-stable set of $G-v$. Thus, every graph in $\mathcal{D}$ is an ESE-graph. It follows that $(\mathcal{C}\cup \mathcal{D})\subseteq \ESE$. 
\end{proof}

Let us now establish the links between $\VCE$-graphs, $\ECE$-graphs and the families $\mathcal{A}, \mathcal{B}, \mathcal{C}, \mathcal{D}$ and $\mathcal{E}$. 
First of all, it is known that both VCE-graphs and ECE-graphs are 2-connected (by Proposition \ref{prop:VCE-graphs are 2-connected} and Lemma \ref{lem:ECE 2 connected} respectively); thus $\VCE\cap \mathcal{E}=\emptyset$ and $\ECE\cap \mathcal{E}=\emptyset$. Moreover, we have clearly $\ESE \cap \ECE=\emptyset $ by definition of these classes.

We next show that the graphs in $\mathcal{A}\cup \mathcal{B} \cup \mathcal{C}\cup \mathcal{D}$ are not $\ECE$.

\begin{prop}\label{prop:K2r-not-ece}
Let $G$ be a connected graph with $2r+1$ vertices for some $r\geq 1$. If $G$ contains one of $K_{2r},K_{r,r}$ as an induced subgraph, then $G$  is not $\ECE$. 
\end{prop}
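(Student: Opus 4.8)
The plan is to exhibit a single \emph{dominating} edge $e=uw$, i.e.\ an edge with $N(\{u,w\})=V(G)$, and to argue that no dominating edge can be critical. If $G$ is not equimatchable then it is not $\ECE$ and there is nothing to prove, so I would assume $G$ is equimatchable. By Lemma~\ref{lem: critical edge iff there is a matching}, $e$ is critical precisely when $G$ has a matching containing $e$ and saturating $N(\{u,w\})$. When $N(\{u,w\})=V(G)$, such a matching would have to saturate every vertex, i.e.\ be perfect; but $|V(G)|=2r+1$ is odd, so no perfect matching exists. Hence $e$ is not critical and $G$ is not $\ECE$. This is exactly the mechanism behind Corollary~\ref{cor:no-dom-edge}, now applied without assuming factor-criticality, which is why the argument covers all three categories of Theorem~\ref{thm:ece-categories} at once.

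It remains to locate such an edge. Let $H$ be the induced copy of $K_{2r}$ or $K_{r,r}$; since $|V(G)|=2r+1$, there is a unique vertex $v\notin V(H)$, and the connectivity of $G$ forces $v$ to have a neighbour $u\in V(H)$. If $H\cong K_{2r}$, then $u$ is adjacent to every other vertex of the clique and to $v$, so $N(u)=V(G)\setminus\{u\}$, and the edge $uv$ already satisfies $N(\{u,v\})=V(G)$. If $H\cong K_{r,r}$ with parts $P,Q$ and, say, $u\in P$, then I would instead pick any $w\in Q$ (which exists since $r\ge 1$): the edge $uw$ lies in $H$, the inclusions $Q\subseteq N(u)$ and $P\subseteq N(w)$ cover both parts, and $v\in N(u)$ covers the remaining vertex, so $N(\{u,w\})\supseteq P\cup Q\cup\{v\}=V(G)$. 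In either case this yields the required dominating edge.

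The one point to get right, and the main obstacle, is the bipartite case: the edge $uv$ joining $v$ to $H$ need not be dominating when $H\cong K_{r,r}$, because $v$ may fail to be adjacent to some vertices of $P$. The fix is to use an \emph{internal} edge $uw$ of $H$ whose endpoints lie in opposite parts, so that the two endpoints jointly dominate $P\cup Q$ while $v$ is picked up through $u$. Once a dominating edge is in hand, everything collapses to the parity observation that a graph on an odd number of vertices has no perfect matching, which is precisely what obstructs criticality.
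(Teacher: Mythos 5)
Your argument is correct, but it is not the route the paper takes. The paper's proof also starts from the vertex $v$ outside the copy $H$ of $K_{2r}$ or $K_{r,r}$ and a neighbour $u\in V(H)$, but then it deletes the edge $uv$ and argues \emph{directly} that $G\setminus uv$ is still equimatchable: since $G\setminus uv$ still contains $H$ as an induced subgraph on $2r$ of its $2r+1$ vertices, every maximal matching of $G\setminus uv$ has size exactly $r$, so $uv$ is not critical. You instead exhibit a \emph{dominating} edge and invoke the parity obstruction behind Corollary~\ref{cor:no-dom-edge}: by Lemma~\ref{lem: critical edge iff there is a matching} a critical edge $uw$ with $N(\{u,w\})=V(G)$ would require a perfect matching of an odd-order graph. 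The two approaches trade different bookkeeping. The paper's version uses the same edge $uv$ in both cases but silently relies on the claim that every maximal matching of $G\setminus uv$ has size $r$ (which needs a short check that a maximal matching there exposes at most two vertices); your version avoids that check entirely and collapses everything to one parity remark, at the cost of a case split on $H$ — and you correctly spot the one trap, namely that $uv$ itself need not dominate when $H\cong K_{r,r}$, fixing it with an internal edge of $H$ across the two parts. You also correctly dispose of the hypothesis of Lemma~\ref{lem: critical edge iff there is a matching} by noting that a non-equimatchable $G$ is vacuously not $\ECE$. Both proofs are complete and of comparable length; yours generalizes the mechanism of Corollary~\ref{cor:no-dom-edge} beyond the factor-critical setting, while the paper's gives the slightly stronger conclusion that $G\setminus uv$ remains equimatchable.
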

\begin{proof}
Let $G$ be a connected graph with $2r+1$ vertices, and  suppose that $G$ contains one of $K_{2r},K_{r,r}$ as an induced subgraph. Then there exists a vertex $v\in V(G)$ such that $G-v$ is isomorphic to $K_{2r}$ or $K_{r,r}$. Since $G$ is connected, the vertex $v$ is adjacent to a vertex $u$ in $V(G-v)$. Note that $G\setminus uv$ contains one of $K_{2r}$ and $K_{r,r}$ as an induced subgraph. So every maximal matching in $G\setminus uv$ is of size $r$. This means that the edge $uv$ is not critical, thus $G$ is not $\ECE$.
\end{proof}

 By Proposition \ref{prop:K2r-not-ece} and the definitions of the families $\mathcal{A}, \mathcal{B}, \mathcal{C}, \mathcal{D}$, we have the following:
 
 \begin{cor} \label{cor:ECEint}
 $(\mathcal{A}\cup \mathcal{B} \cup \mathcal{C}\cup \mathcal{D})\cap \ECE = \emptyset$.
 \end{cor}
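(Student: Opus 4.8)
The plan is to observe that the defining property of each of the families $\mathcal{A}, \mathcal{B}, \mathcal{C}, \mathcal{D}$ forces its members to fall under the hypothesis of Proposition \ref{prop:K2r-not-ece}: that of being a connected graph on an odd number $2r+1$ of vertices which contains either $K_{2r}$ or $K_{r,r}$ as an induced subgraph. Once this is verified family by family, the conclusion that none of these graphs is $\ECE$ follows immediately from the Proposition, and since all four families lie outside $\ECE$, so does their union, which is exactly the claim.

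First I would handle $\mathcal{A}, \mathcal{B}, \mathcal{D}$ uniformly, as for these the induced clique (or complete bipartite subgraph) is read off directly from the distinguished vertex $v$. By definition each such $G$ is an EFC-graph, hence connected and factor-critical, admitting a vertex $v$ with $G-v \cong K_{2r}$ (for $\mathcal{A}$) or $G-v \cong K_{r,r}$ (for $\mathcal{B}$ and $\mathcal{D}$), with $r \geq 2$. Thus $|V(G)| = 2r+1$ and the subgraph induced by $V(G)\setminus\{v\}$ is the required $K_{2r}$ or $K_{r,r}$. Proposition \ref{prop:K2r-not-ece} then applies directly and yields $G \notin \ECE$ in each of these three cases.

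Next I would treat $\mathcal{C}$, which needs a little more care to exhibit the induced clique. If $G \cong K_{2r+1}$ (or $G \cong K_3$, which is the case $r=1$), then deleting any single vertex leaves an induced $K_{2r}$, and $|V(G)| = 2r+1$. If $G \cong K_{2r+1}\setminus e$ with $e = xy$, then deleting the endpoint $x$ removes the unique missing edge and leaves an induced $K_{2r}$ on the remaining $2r$ vertices, again with $|V(G)| = 2r+1$. In both situations $G$ is connected, so Proposition \ref{prop:K2r-not-ece} gives $G \notin \ECE$.

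Since $\mathcal{A}, \mathcal{B}, \mathcal{C}, \mathcal{D}$ are pairwise disjoint and each is disjoint from $\ECE$, their union is disjoint from $\ECE$, establishing the corollary. The only step that is not an immediate application of the Proposition is the verification for $\mathcal{C}$ that $K_{2r+1}\setminus e$ still contains an induced $K_{2r}$; this is the single place where the induced clique is not simply the graph $G-v$, but it is resolved at once by deleting an endpoint of the missing edge. I therefore expect no genuine obstacle, the entire argument being a short case check feeding into Proposition \ref{prop:K2r-not-ece}.
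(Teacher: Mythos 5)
Your proposal is correct and matches the paper exactly: the paper derives this corollary directly from Proposition \ref{prop:K2r-not-ece} together with the definitions of $\mathcal{A}, \mathcal{B}, \mathcal{C}, \mathcal{D}$, which is precisely your family-by-family verification that each member is a connected graph on $2r+1$ vertices containing an induced $K_{2r}$ or $K_{r,r}$. Your extra remark on $K_{2r+1}\setminus e$ (delete an endpoint of the missing edge to expose an induced $K_{2r}$) is a correct elaboration of a detail the paper leaves implicit.
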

 
Since VCE-graphs are equivalent to 2-connected $(K_{2r},K_{r,r})$-free EFC-graphs on $2r+1$ vertices (by Theorem \ref{thm:main-vce}), and ECE-graphs on $2r+1$ vertices do not contain $K_{2r}$ or $K_{r,r}$ (by Proposition \ref{prop:K2r-not-ece}), we have the following:

\begin{cor}\label{cor:ece-vce}
All factor-critical $\ECE$-graphs are $\VCE$.  
\end{cor}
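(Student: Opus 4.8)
The plan is to read the result straight off the characterization of $\VCE$-graphs in Theorem \ref{thm:main-vce}, supplying it with the two structural facts that a factor-critical $\ECE$-graph automatically enjoys. First I would fix a factor-critical $\ECE$-graph $G$. Since $G$ is factor-critical, $G-v$ has a perfect matching for every vertex $v$, and hence $|V(G)|$ is odd; write $|V(G)| = 2r+1$. Being $\ECE$ means, in particular, that $G$ is equimatchable, so together with factor-criticality it is an $\EFC$-graph. Moreover, by Lemma \ref{lem:ECE 2 connected}, $G$ is 2-connected.

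Next I would invoke Proposition \ref{prop:K2r-not-ece} in its contrapositive form: a connected graph on $2r+1$ vertices that contains $K_{2r}$ or $K_{r,r}$ as an induced subgraph cannot be $\ECE$. Since our $G$ is $\ECE$ on $2r+1$ vertices, it therefore contains neither $K_{2r}$ nor $K_{r,r}$ as an induced subgraph; that is, $G$ is $(K_{2r},K_{r,r})$-free.

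Collecting these observations, $G$ is a 2-connected $(K_{2r},K_{r,r})$-free $\EFC$-graph on $2r+1$ vertices. Theorem \ref{thm:main-vce} characterizes exactly such graphs as the $\VCE$-graphs, so $G$ is $\VCE$, which is what we wanted.

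I expect no genuine obstacle here: the substance has been front-loaded into Theorem \ref{thm:main-vce} and Proposition \ref{prop:K2r-not-ece}, so the corollary is essentially a one-line consequence of combining them. The only point requiring a moment's care is the bookkeeping that a factor-critical graph has odd order $2r+1$ and that the property $\ECE$ entails equimatchability, which is precisely what upgrades ``factor-critical'' to the $\EFC$ hypothesis demanded by Theorem \ref{thm:main-vce}. Once these two observations are in place, the cited results slot together immediately.
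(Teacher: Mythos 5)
Your proposal is correct and follows exactly the route the paper takes: it combines Theorem \ref{thm:main-vce} with the contrapositive of Proposition \ref{prop:K2r-not-ece}, plus the 2-connectedness from Lemma \ref{lem:ECE 2 connected} and the odd order of a factor-critical graph. The paper's own justification is the one-sentence remark immediately preceding the corollary, so there is nothing to add.
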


As we have $(\mathcal{A}\cup \mathcal{B} \cup \mathcal{C}\cup \mathcal{D}) \cap \ECE =  \emptyset$ by Corollary \ref{cor:ECEint}, Corollary \ref{cor:ece-vce} implies that $\mathcal{A}\cup \mathcal{B} \cup \mathcal{C}\cup \mathcal{D}\cup \mathcal{E} \subseteq \EFC\sm \VCE $. It remains to show that VCE is equivalent to the class $\EFC\sm (\mathcal{A}\cup \mathcal{B} \cup \mathcal{C}\cup \mathcal{D}\cup \mathcal{E})$.

\begin{cor}
$\EFC\sm \VCE= \mathcal{A}\cup \mathcal{B}\cup \mathcal{C}\cup \mathcal{D}\cup \mathcal{E}.$
\end{cor}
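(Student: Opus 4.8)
The plan is to prove the two inclusions of the claimed equality separately, noting that $\mathcal{A}\cup\mathcal{B}\cup\mathcal{C}\cup\mathcal{D}\cup\mathcal{E}\subseteq \EFC\sm\VCE$ has already been settled in the discussion preceding the statement: each of the five families consists of factor-critical equimatchable graphs, and none of them is $\VCE$, since the graphs in $\mathcal{E}$ have a cut vertex and thus fail the 2-connectivity that $\VCE$-graphs enjoy by Proposition \ref{prop:VCE-graphs are 2-connected}, while every graph in $\mathcal{A}\cup\mathcal{B}\cup\mathcal{C}\cup\mathcal{D}$ contains $K_{2r}$ or $K_{r,r}$ as an induced subgraph on $2r+1$ vertices and so is not $\VCE$ by Theorem \ref{thm:main-vce}. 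Consequently all the work lies in the reverse inclusion $\EFC\sm\VCE\subseteq \mathcal{A}\cup\mathcal{B}\cup\mathcal{C}\cup\mathcal{D}\cup\mathcal{E}$.

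To prove it, I would take an arbitrary $G\in\EFC\sm\VCE$ and invoke Proposition \ref{prop:EFC-notVCE}, which supplies a vertex $v$ such that every component of $G-v$ is a $K_{r,r}$ or a $K_{2t}$ and $v$ is adjacent to two adjacent vertices of each component. If $G$ has a cut vertex then $G\in\mathcal{E}$ by definition and we are done. Otherwise $G$ is 2-connected, so $G-v$ is connected and hence a single component isomorphic to $K_{2t}$ or to $K_{r,r}$; the remaining task is a finite case analysis placing $G$ into the appropriate family according to the adjacencies of $v$.

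For the case $G-v\cong K_{2t}$ I would split on $d(v)$, which satisfies $2\le d(v)\le 2t$ by Proposition \ref{prop:EFC-notVCE}: if $2\le d(v)\le 2t-2$ (which forces $t\ge 2$) then $G\in\mathcal{A}$; if $d(v)=2t-1$ then $d(v)\ge 2$ forces $t\ge 2$ and $G\cong K_{2t+1}\sm e\in\mathcal{C}$; and if $d(v)=2t$ then $G\cong K_{2t+1}\in\mathcal{C}$, with the degenerate value $t=1$ giving $G\cong K_3\in\mathcal{C}$. For the case $G-v\cong K_{r,r}$ with stable sets $R$ and $T$, the hypothesis that $v$ is adjacent to two adjacent vertices guarantees a neighbour of $v$ in each of $R$ and $T$; I would then observe that either $v$ is complete to one of $R,T$, in which case $G\in\mathcal{D}$, or $v$ has a non-neighbour in each of $R$ and $T$, in which case $G\in\mathcal{B}$. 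These two alternatives are complementary and exhaustive for $r\ge 2$, while $r=1$ again yields $G\cong K_3\in\mathcal{C}$, completing the classification and hence the inclusion.

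Since the structural heavy lifting is done by Proposition \ref{prop:EFC-notVCE}, the main obstacle here is not conceptual but a careful verification that the case analysis is exhaustive and that the boundary configurations are routed to the intended family: specifically, that the near-complete graphs $K_{2t+1}$, $K_{2t+1}\sm e$ and $K_3$ are sent to $\mathcal{C}$ rather than to $\mathcal{A}$, and that the dichotomy ``complete to a part'' versus ``missing a vertex in each part'' for the bipartite base exactly partitions the admissible neighbourhoods of $v$ between $\mathcal{D}$ and $\mathcal{B}$. Combining this reverse inclusion with the forward inclusion recorded above yields the stated equality.
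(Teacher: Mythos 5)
Your proposal is correct and follows essentially the same route as the paper: the forward inclusion is taken from the preceding discussion, and the reverse inclusion applies Proposition \ref{prop:EFC-notVCE}, sends graphs with a cut vertex to $\mathcal{E}$, and classifies the 2-connected case according to whether $G-v$ is $K_{2t}$ or $K_{r,r}$. The only difference is that you spell out the boundary subcases ($d(v)\in\{2t-1,2t\}$ going to $\mathcal{C}$, the $\mathcal{B}$ versus $\mathcal{D}$ dichotomy, and $r=t=1$ giving $K_3$) that the paper leaves implicit, and your checks there are accurate.
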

\begin{proof}
Let $G$ be a graph in $\EFC\sm \VCE$. Then, by Proposition \ref{prop:EFC-notVCE}, $G$ has a vertex $v$ such that  every component of $G- v$ is isomorphic to $K_{r,r}$ or $K_{2t}$ for some $r, t\geq 1$   and where $v$ is adjacent to at least two adjacent vertices of $G-v$. If $v$ is a cut-vertex then $G\in \mathcal{E}$. Assume $v$ is not a cut-vertex, then $G-v$ is a connected graph on $2r$ vertices. If $G-v$ is a $K_{r,r}$ for $r\geq 2$ then $G\in \mathcal{B}\cup \mathcal{D}$. If however $G-v$ is a $K_{2r}$  then $G\in \mathcal{A}\cup \mathcal{C}$. It follows that the families $\mathcal{A}, \mathcal{B}, \mathcal{C}, \mathcal{D}$ form together all 2-connected graphs in $\EFC \sm \VCE$ as decribed in Proposition \ref{prop:EFC-notVCE}. 
\end{proof}

 As we stated above, EFC-graphs with a cut vertex are equivalent to the family  $\mathcal{E}$ while 2-connected EFC-graphs consist of three disjoint subclasses;  $\mathcal{A}\cup \mathcal{B}$, $\mathcal{C}\cup \mathcal{D}$ and VCE-graphs. Let us also recall that ECE-graphs are either factor-critical, or bipartite, or even cliques by Theorem \ref{thm:ece-categories}. This completes the full containment relationship between VCE-graphs, ECE-graphs, ESE-graphs and EFC-graphs as illustrated in Figure \ref{fig: ECE VCE} where these classes are represented by sets VCE, ECE, ESE and EFC respectively, and FC means factor-critical.

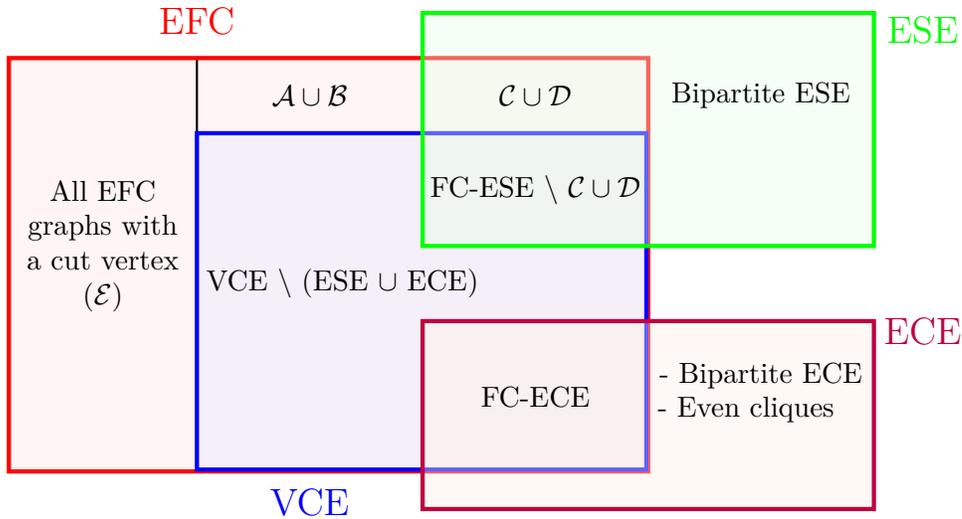
\begin{figure}[htb]
\centering
\begin{tikzpicture}[thick,draw opacity=1,scale=.5,every text node part/.style={align=center}]
\begin{scope}[fill opacity=.4]
\draw[red,ultra thick,fill=red!10!white] (1,0) rectangle (18,11);
\draw[blue,ultra thick,fill=blue!10!white] (6,0.05) rectangle (17.95,9);
\draw[green,ultra thick,fill=green!10!white] (12,6) rectangle (24,12.2);
\draw[purple,ultra thick,fill=orange!10!white] (12,-1) rectangle (24,4);
\end{scope}
\begin{scope}
\draw (6,9.05)--(6,10.95);
\node at (3.5,6)    { All EFC \\ graphs with \\ a cut  vertex \\  ($\mathcal{E}$)};
\node at (6,12)    {\color{red} {\Large EFC}};
\node at (15,2)    { FC-ECE};
\node at (9,-0.8)    {\color{blue} \Large VCE};
\node at (25.3,3.75)    {\color{purple} \Large ECE};
\node at (25.3,11.75)    {\color{green} \Large ESE};
\node at (10,5)    { VCE $\setminus$ (ESE $\cup$ ECE) };
\node at (21,2.1)    { - Bipartite ECE \\ - Even cliques  \hspace*{2.7mm}  };
\node at (15,7.5)    { FC-ESE $\setminus$ $\mathcal{C}\cup \mathcal{D}$};
\node at (9,10)    { $\mathcal{A}\cup \mathcal{B}$};
\node at (15,10)    { $\mathcal{C}\cup \mathcal{D}$};
\node at (21,10)    { Bipartite ESE};
\end{scope}
\end{tikzpicture}
\caption{The world of equimatchable graphs.}
\label{fig: ECE VCE}
\end{figure}

\section{Conclusion}\label{sec:conclusion}

In this paper, we shed light on the structure of equimatchable graphs from a new perspective, namely the criticality with respect to vertex removals and edge removals. We first showed that $\VCE$-graphs boil down to factor-critical equimatchable graphs apart from a few simple exceptions. We also note that factor-critical $\ECE$-graphs form a subclass of $\VCE$-graphs.  This motivates our studies on factor-critical $\ECE$-graphs, whose structure can be analyzed according to their connectivity \cite{Favaron, Kotbic}. We give a full characterization of factor-critical $\ECE$-graphs with connectivity 2.

It remains to characterize factor-critical $\ECE$-graphs with connectivity at least 3. We also investigated this case and obtained a characterization for most of the situations, leaving open a few cases. However, we prefer not reporting these results in this paper due to several reasons. First, the proofs consist of long and technical case analyses. Second, they leave some open cases, thus not providing a full characterization. Last but not least, all these results are based on the results of Eiben and Kotrbcik \cite{Kotbic} about the connectivity of factor-critical equimatchable graphs which is an unpublished manuscript. Nevertheless, let us give a quick overview of our findings which might shed light on forthcoming studies in this direction. The reader is referred to the Appendix. %in  {\color{red} [ADD ARXIV REF ONCE UPLOADED] 
%for the details and proofs of the following results.

Let $G$ be a factor-critical $\ECE$-graph with connectivity $k\geq 3$. Then, if $G$ is a graph with $|G|> 2k+1$ and minimum degree greater than $k$ then we have a full characterization as follows. If $k=3$ then $G$ belongs to three possible categories that can be described in a similar way to the types forming the family $\F$ (for connectivity 2). If $k\geq4$, then  $\overline{G}$ is a maximal triangle-free graph. On the other hand,  the case where $|G|> 2k+1$ and the minimum degree is $k$ is open. For relatively small graphs, that is $|G|\leq 2k+1$, we know that the connectivity is at least 4 but their characterization is again open. It would be interesting to obtain a full characterization of factor-critical $\ECE$-graph with connectivity $k\geq 3$, which may require the development of stronger tools to embody various cases.

%
%\vspace*{1cm}
%
%{\color{blue}
%\textbf{Alternative Summary}\medskip 
%
%\textbf{Factor-critical ECE graphs:} \medskip
%
%\begin{itemize}
%\item Connectivity $2$. (Done, Theorem \ref{thm:chrc of ECE conn 2})
%
%\item Connectivity $k \geq 3$
%\begin{itemize}
%\item[(i)] \ $|G|\leq 2k+1$ 
%\begin{itemize}
%\item[-] $k=3$. (No such graph, see Figure \ref{fig:7-vertices})
%{\color{red}\item[-] $k\geq 4 $.}
%\end{itemize}
%\item[(ii)] \ $|G|> 2k+1$  
%\begin{itemize}
%{\color{red}\item[-] $\delta(G)=k$.}
%\item[-] $\delta(G)>k$. (Done,  Lemma \ref{lem:component-2-vertices},  Lemma \ref{lem: FC ECE,  connectivity 3}, Theorem \ref{lem:conn 4- three equiv})
%\end{itemize}
%\end{itemize}
%\end{itemize}
%
%
%}
%
%\vspace*{1em}

%%%%%%%%%%%%%%%%%%%%%%%%%%%%%%%%%%%%%%%%%%%%%%%%%%%%%%%%%%%%%%%%%%%%%%%%%%%%%%%%%%%
\bibliographystyle{plain}

%%%%%%%%%%%%%%%%%%%%%%%%%%%%%%%%%%%%%%%%%%%%%%%%%%%%%%%%%%%%%%%%%%%%%%%%%%%%%%%%%%%%%%%%%%%%%%%%%%%%
\newpage
\section*{Appendix}

Here, we provide a partial characterization of factor-critical $\ECE$-graphs with connectivity $k\geq 3$. This section consist of two parts; factor-critical ECE-graphs with connectivity 3, and 4-connected factor-critical ECE-graphs.  All results in this section are based on the results of Eiben and Kotrbcik  \cite{Kotbic}. One of the key results is the following:

\begin{nlem} \cite{Kotbic} \label{lem:kotbic two-comp}
Let $G$ be a $k$-connected EFC-graph with a $k$-cut set $S$, where $k\geq 2$. If $G$ has at least $2k+3$ vertices, then $G - S$ has precisely two components.
\end{nlem}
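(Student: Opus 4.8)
The plan is to argue by contradiction: assume $S$ is a minimum cut (so the connectivity is exactly $k$) but $G-S$ has components $C_1,\dots,C_m$ with $m\ge 3$, and then violate equimatchability via Lemma~\ref{lem: defn equim}. The target is to exhibit an independent set $I=\{x_1,x_2,x_3\}$, with $x_i$ lying in a distinct component $C_i$, such that $G-I$ has a perfect matching. Such a set is automatically independent, since vertices in different components of $G-S$ are pairwise non-adjacent, so the only real work is producing the matching.

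First I would record two structural facts. By minimality of the cut, $N(C_i)=S$ for every $i$: if some vertex of $S$ had no neighbour in $C_i$, then $N(C_i)\subsetneq S$ would be a cut of size $<k$ separating $C_i$ from the (nonempty, as $m\ge 2$) union of the other components, contradicting $k$-connectivity. Thus every vertex of $S$ is adjacent into every component. Second, factor-criticality forces $|V(G)|$ odd and gives, for each vertex $v$, a perfect matching of $G-v$; bookkeeping the edges of such a matching between $S$ and the components shows that at most $k$ (at most $k-1$ when $v\in S$) component-vertices can be matched into $S$, and that each component of odd order must send at least one such cross-edge, whence the number of odd components is at most $k-1$. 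I would also note, as a sanity check on the hypothesis $|V(G)|\ge 2k+3$, that if the components were too numerous (for instance many singletons), then no perfect matching of $G-v$ could saturate them all through the $k$-vertex set $S$, a Hall-type obstruction contradicting factor-criticality; this is precisely the regime the size bound controls.

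The heart of the proof is the construction of $I$. Starting from a perfect matching $M$ of $G-x_1$ (factor-criticality, with $x_1\in C_1$ chosen freely), I would pick $x_2\in C_2$ and $x_3\in C_3$ and delete the $M$-edges at $x_2,x_3$, which exposes their partners $M(x_2),M(x_3)$. It then suffices to re-match these two vertices: because $S$ is completely joined to every component, the partners can be re-routed along an $M$-alternating path through $S$, or simply matched to each other when both already lie in $S$. Using the freedom in the choice of $x_1,x_2,x_3$ and of $M$, together with the parity and capacity constraints above and the bound $|V(G)|\ge 2k+3$ (which guarantees enough uncommitted vertices and the correct parities for the re-routing), one obtains a perfect matching of $G-\{x_1,x_2,x_3\}$. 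This contradicts Lemma~\ref{lem: defn equim} (equivalently Corollary~\ref{cor: def EFC}), forcing $m\le 2$; since $S$ is a cut, $G-S$ has exactly two components.

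I expect the main obstacle to be exactly this re-matching step: controlling the parities of the truncated components $C_1-x_1,C_2-x_2,C_3-x_3$ against the limited cross-matching capacity into the $k$-element set $S$, so that the two exposed partners can always be re-absorbed. This is where a careful case analysis (on how many of $C_1,C_2,C_3$ are matched internally versus through $S$, and on their parities) must be combined with the hypothesis $|V(G)|\ge 2k+3$. An alternative, possibly cleaner, finish would invoke the isolating-matching structure of Lemma~\ref{lem: gen isolating matching} to show that a minimal matching isolating a vertex of $S$ must leave a randomly matchable, hence connected, remnant, which cannot span three pairwise non-adjacent components.
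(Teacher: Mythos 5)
First, a point of reference: the paper does not prove this lemma at all --- it is imported verbatim from Eiben and Kotrbcik \cite{Kotbic} (an unpublished manuscript), so there is no in-paper argument to compare yours against. Judged on its own merits, your proposal has the right opening moves (the reduction via Lemma \ref{lem: defn equim} to exhibiting an independent triple $\{x_1,x_2,x_3\}$, one vertex per component, whose deletion leaves a perfect matching; and the observation that $N(C_i)=S$ for every component by $k$-connectivity) but it stops exactly where the proof actually begins. The step ``delete the $M$-edges at $x_2,x_3$ and re-match their partners'' is asserted, not proved, and it is genuinely problematic: if $M(x_2)\in C_2$ and $M(x_3)\in C_3$ then these two exposed vertices are non-adjacent, and since $M$ is a perfect matching of $G-x_1$ every vertex of $S$ is already saturated, so there is no evident $M$-alternating path joining them; even when both partners land in $S$ they need not be adjacent. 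You explicitly defer this to ``a careful case analysis'' that is never carried out, so the contradiction is never reached.

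The second, related gap is that the hypothesis $|V(G)|\geq 2k+3$ is invoked only rhetorically (``enough uncommitted vertices and the correct parities''). This bound is not a convenience --- the statement is false without it, which is precisely why \cite{Kotbic} and the present paper treat graphs on at most $2k+2$ vertices separately --- so any correct proof must use it at an identifiable place, and yours does not. Your proposed ``cleaner finish'' via Lemma \ref{lem: gen isolating matching} has the same defect: a minimal matching isolating a vertex of $S$ does leave a connected randomly matchable remnant, but nothing forces that remnant to meet more than one component of $G-S$ (it could sit inside $S\cup C_1$ while the matching swallows the other components whole), so connectivity of the remnant alone does not contradict the existence of three components; one again needs a counting argument in which the size bound does the work. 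As it stands the proposal is a plausible strategy outline with the decisive step missing, not a proof.
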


We show in Proposition \ref{prop. G-S has two comp} that for a $k$-connected factor-critical ECE-graph $G$ with $k\geq 3$, the graph  $G - S$ has precisely two components for a $k$-cut set $S$ (without any condition on the order of $G$ as in Lemma \ref{lem:kotbic two-comp}). In Lemmas \ref{lem: FC ECE,  connectivity 3} and \ref{lem:component-2-vertices},  we first characterize factor-critical ECE-graphs with connectivity 3 where each one of these components has at least two vertices; we figure out three types of graphs in this class (see Figure \ref{fig:ECE, conn 3}). The case where $G-S$ has a component with a single vertex remains open. 

Later we investigate 4-connected factor-critical ECE-graphs. We show in Lemma \ref{lem:conn 4- three equiv} that if $G$ is such a graph with  $2k+3$ vertices for $k\geq 4$ and $\delta(G)>k$, where $\delta(G)$ is the minimum degree of a vertex in $G$, then $\overline{G}$ is a maximal triangle-free graph. 
The case $\delta(G)=k$ remains open as well as the characterization of those 4-connected factor-critical ECE-graphs having at most $2k+1$ vertices.

\vspace*{1em}
\subsection*{A1. Factor-critical ECE-graphs with connectivity 3} ~~\medskip

We now deal with the edge-criticality of 3-connected equimatchable factor-critical graphs.

\begin{nthm} \cite{Kotbic} \label{thm:kotbic conn 3}
Let $G$ be a $k$-connected equimatchable factor-critical graph with at least $2k + 3$ vertices and a $k$-cut $S$ such that $G - S$ has two components with at least 3 vertices, where $k\geq 3$. Then $G \sm S$ has exactly two components and both are complete graphs.
\end{nthm}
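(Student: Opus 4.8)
The plan is to prove the two assertions separately. The claim that $G-S$ has exactly two components is immediate from Lemma~\ref{lem:kotbic two-comp}, since $G$ is a $k$-connected EFC-graph with $k\geq 3\geq 2$ and at least $2k+3$ vertices; so denote the two components by $A$ and $B$ with $|A|,|B|\geq 3$. Before addressing completeness I would record the facts the later arguments rest on: because $S$ is a minimum cut, every $s\in S$ has a neighbour in both $A$ and $B$ (otherwise $S\setminus\{s\}$ would already separate a component, contradicting $k$-connectivity), and $k$-connectivity forces minimum degree at least $k\geq 3$; moreover, since $G$ is factor-critical and equimatchable, every maximal matching exposes exactly one vertex (Corollary~\ref{cor: def EFC}) and there is no independent triple whose deletion leaves a perfect matching (Lemma~\ref{lem: defn equim}). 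This last fact is the engine of the whole argument.

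The decisive structural observation is that there are no edges between $A$ and $B$. Consequently, if $A$ fails to be complete, witnessed by non-adjacent $x,y\in A$, then for \emph{every} $z\in B$ the set $\{x,y,z\}$ is independent, so Lemma~\ref{lem: defn equim} forces $G-\{x,y,z\}$ to have no perfect matching for all such $z$ (and symmetrically with the roles of $A$ and $B$ exchanged). I would aim to contradict this. The general mechanism is rearrangement: since $G$ is factor-critical, $G-z$ has a perfect matching $N$ saturating both $x$ and $y$, and writing $x'=N(x)$, $y'=N(y)$ (both in $(A\cup S)\setminus\{x,y\}$), if one can arrange $x'y'\in E(G)$ then $(N\setminus\{xx',yy'\})\cup\{x'y'\}$ is a perfect matching of $G-\{x,y,z\}$, the desired contradiction. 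Thus everything reduces to understanding how tightly the internal structure of a component and its attachment to $S$ constrain such rearrangements.

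To pin down that structure I would exploit isolating matchings exactly as in the connectivity-2 analysis underlying Theorem~\ref{thm: Favaron connectivity 2 }. For a vertex $v$ of one component I would build a minimal matching isolating $v$ whose saturated set is essentially $(A\cup S)\setminus\{v\}$ together with a matched-off portion of the other component; Lemma~\ref{lem: gen isolating matching} then certifies that the leftover is $K_{2n}$ or $K_{n,n}$, and by adjusting parity with a single deleted vertex and invoking Lemma~\ref{lem:randomlymatchable}, this shows that each of $A$ and $B$ is, up to deleting at most one vertex, randomly matchable; hence each component is one of $K_{2n}$, $K_{2n+1}$, $K_{n,n}$, or $K_{n,n+1}$. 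The first two are complete, so the whole theorem comes down to excluding the two bipartite possibilities.

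The main obstacle, and the precise point where $k\geq 3$ and $|A|,|B|\geq 3$ are genuinely needed, is ruling out a complete bipartite component. Suppose $B\cong K_{p,p+1}$ with parts $P$ and $Q$, $|Q|=p+1$; the isolating-matching analysis (as in the connectivity-2 case) should show that $S$ attaches only to the larger part $Q$, so the small part $P$ has all its neighbours inside $Q$, and the minimum-degree bound $\geq 3$ rules out the degenerate case $p=1$, leaving $|P|=p\geq 2$. I would then take the independent triple consisting of two vertices $u,v\in P$ and one vertex $a\in A$ and build a perfect matching of its complement: deleting $u,v$ matches $P\setminus\{u,v\}$ into $Q$ and frees exactly three vertices of $Q$, which must be absorbed by $S$, and the presence of $k\geq 3$ cut vertices each adjacent to $Q$, together with the matchability of $A-a$ inside $A\cup S$, should let one complete a perfect matching of $G-\{u,v,a\}$, contradicting Lemma~\ref{lem: defn equim}; the case $B\cong K_{n,n}$ is handled analogously. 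The delicate part is verifying that $S$ can simultaneously absorb the freed $Q$-vertices \emph{and} leave $A\cup S$ perfectly matchable — this is where the attachment pattern of $S$ and the parities of the two component sizes interact and must be tracked carefully, and it is the step I expect to require the most case analysis.
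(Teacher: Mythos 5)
The paper does not prove this statement at all: it is imported verbatim from Eiben and Kotrbc\'ik \cite{Kotbic} (an unpublished manuscript), so there is no in-paper proof to compare your argument against. Judged on its own, your proposal correctly dispatches the first assertion — that $G-S$ has exactly two components is indeed an immediate application of Lemma \ref{lem:kotbic two-comp} — and the overall strategy for completeness (a non-edge $xy$ in $A$ together with any $z\in B$ is an independent triple, so by Lemma \ref{lem: defn equim} it suffices to exhibit a perfect matching of $G-\{x,y,z\}$) is the right engine. But what you have written is a plan, not a proof, and it has two genuine gaps.

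First, the inference ``each component minus one vertex is randomly matchable, hence each component is one of $K_{2n}$, $K_{2n+1}$, $K_{n,n}$, $K_{n,n+1}$'' is invalid as stated. Knowing that $C-v$ is a clique (or a balanced complete bipartite graph) for one choice of $v$ says nothing about the edges from $v$ into $C$; this is exactly why the connectivity-$2$ analogue, Theorem \ref{thm: Favaron connectivity 2 }, must list the additional possibilities $K_{2p+1}\setminus b_1b_2$ and $K_{p,p+1}+b_1b_2$. To pin down the component you would need the randomly-matchable conclusion for many different deleted vertices (as the paper does in the proof of Lemma \ref{lem: G is ECE then G has five conf}, where it is established for every $b\in N_B(\{s_1,s_2\})$) and then a separate stitching argument; none of this is carried out, and you have not verified that a minimal isolating matching with the saturated set you describe even exists. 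Second, and more seriously, the exclusion of the complete bipartite components — the only part of the theorem with real content, since $B\cong K_{p,p+1}$ genuinely occurs at connectivity $2$ — is left entirely to ``should show'' and ``I expect to require the most case analysis.'' You have not identified the specific mechanism by which $k\geq 3$ and $|A|,|B|\geq 3$ kill this configuration, so the theorem is not proved. For what it is worth, for $k\geq 4$ there is a cleaner route: Lemma \ref{lem:kotbic alpha=2} gives $\alpha(G)=2$ under the same hypotheses, which instantly forces both components to be complete; only the case $k=3$ requires the hands-on matching analysis you sketch, and that is precisely the case your outline does not close.
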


We have detected all ECE-graphs having 7 vertices by using a computer program written in Python-Sage. There are only $4$ such graphs shown in Figure \ref{fig:7-vertices}. 

\begin{figure}[htb]
\centering     %%% not \center
\includegraphics[scale=.5]{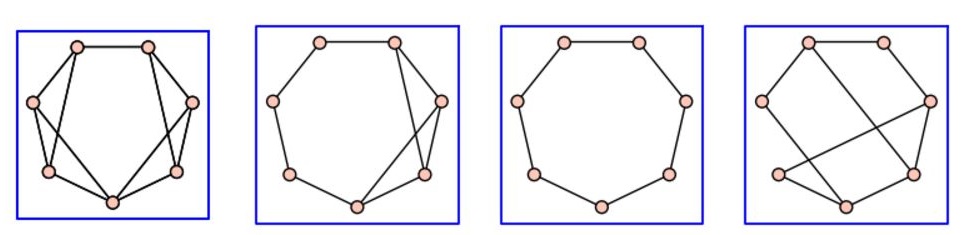}
\caption{All factor-critical ECE-graphs with 7 vertices.}
\label{fig:7-vertices}
\end{figure}

Since all factor-critical ECE-graphs of order at most 7 have a 2-cut set (see Figure \ref{fig:7-vertices}), we say that any 3-connected factor-critical ECE-graph has at least 9 vertices. So, we have the following by Lemma \ref{lem:kotbic two-comp}.

\begin{nprop}\label{prop. G-S has two comp}
If $G$ is a $k$-connected factor-critical $\ECE$-graph with $k$-cut $S$ for $k\geq 3$, then $G - S$ has precisely two components.
\end{nprop}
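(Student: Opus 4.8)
The plan is to reduce the statement to an order bound and then invoke Lemma \ref{lem:kotbic two-comp}, which already yields exactly two components once $|G|\geq 2k+3$. The structural backbone I would use throughout is the following observation: if $C$ is any component of $G-S$, then $N(C)\subseteq S$ separates $C$ from the remaining (non-empty) components, so $k$-connectivity forces $|N(C)|\geq k$ and hence $N(C)=S$. Thus every component of $G-S$ is joined to all of $S$, and, since $G$ is factor-critical, $|G|$ is odd.

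First I would settle $k=3$. By Remark \ref{rem:7 vertices} a factor-critical $\ECE$-graph has odd order at least $7$, and the enumeration in Figure \ref{fig:7-vertices} shows that every factor-critical $\ECE$-graph on exactly $7$ vertices possesses a $2$-cut, hence is not $3$-connected. Therefore a $3$-connected factor-critical $\ECE$-graph has at least $9=2\cdot 3+3$ vertices, and Lemma \ref{lem:kotbic two-comp} applies verbatim to give precisely two components.

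For $k\geq 4$, any $G$ with $|G|\geq 2k+3$ is again disposed of directly by Lemma \ref{lem:kotbic two-comp}, so the only remaining task is the small regime $|G|\leq 2k+1$, where the cut leaves at most $k+1$ vertices outside $S$ and the hypotheses of Lemma \ref{lem:kotbic two-comp} are not met. Here I would argue by contradiction that $G-S$ cannot have $t\geq 3$ components. Using $N(C_i)=S$, I would first note that $S$ is not independent, for otherwise every vertex outside $S$ would have all its neighbours in $S$ and $G$ would be bipartite with parts $S$ and $V(G)\setminus S$, contradicting factor-criticality. The heart of the matter is then a saturation count driven by edge-criticality through Lemma \ref{lem: critical edge iff there is a matching} together with Corollary \ref{cor:no-dom-edge}: an edge internal to some component $C_i$ can be critical only if, after its two endpoints are consumed inside $C_i$, the remaining at most $k-1$ outside vertices can still saturate all $k$ vertices of $S$. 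Balancing this demand against the number of pairs that $S$ can absorb by its own internal edges, and against the independence of triples obtained by picking one vertex from each of three distinct components (to which Lemma \ref{lem: defn equim} applies), forces $t\leq 2$.

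The main obstacle is exactly this small-order exclusion for $k\geq 4$. Because the components of $G-S$ may carry internal edges and $S$ itself may induce edges, the saturation bookkeeping is delicate and must weigh the number of within-$S$ matched pairs against the at most $k+1$ non-cut vertices; this is the point where the full strength of edge-criticality, rather than mere equimatchability, is required, since equimatchable but non-$\ECE$ configurations with three components on $k+1$ outside vertices do exist. Once $t\leq 2$ is secured in the small regime and Lemma \ref{lem:kotbic two-comp} settles the case $|G|\geq 2k+3$, the two regimes together show that $G-S$ has precisely two components.
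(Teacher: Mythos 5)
Your overall route---reduce to the order bound $|G|\geq 2k+3$ and then invoke Lemma \ref{lem:kotbic two-comp}---is exactly the paper's: for $k=3$ the paper likewise uses the computer enumeration behind Figure \ref{fig:7-vertices} (every factor-critical $\ECE$-graph on at most $7$ vertices has a $2$-cut) to conclude $|G|\geq 9=2\cdot 3+3$ and then cites Lemma \ref{lem:kotbic two-comp}. On the cases the paper actually argues, your proposal and the paper coincide.

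The genuine gap is the regime you yourself flag: $k\geq 4$ with $9\leq |G|\leq 2k+1$ (the value $|G|=2k+2$ being excluded by parity, since factor-critical graphs have odd order). There the hypothesis of Lemma \ref{lem:kotbic two-comp} fails, and your proposal only gestures at a ``saturation count'' without carrying it out: as written, nothing rules out $t\geq 3$ components when $G-S$ has at most $k+1$ vertices. Moreover, the one concrete step you do offer for this regime is faulty: from $N(C_i)=S$ and $S$ independent it does not follow that $G$ is bipartite, because the components $C_i$ may carry internal edges (as you yourself note a few lines later), so the appeal to non-bipartiteness of factor-critical graphs proves nothing here. The remaining ingredients you name (Lemma \ref{lem: critical edge iff there is a matching}, Corollary \ref{cor:no-dom-edge}, Lemma \ref{lem: defn equim} applied to a transversal of three components) are plausible tools, but no actual inequality is derived from them, so the small-order case is not proven. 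It is only fair to add that the paper's own proof has the identical hole: the sentence preceding the proposition establishes only $|G|\geq 9$, which equals $2k+3$ solely for $k=3$, yet Lemma \ref{lem:kotbic two-comp} is then invoked for all $k\geq 3$; the paper's conclusion even concedes that factor-critical $\ECE$-graphs with $|G|\leq 2k+1$ and connectivity at least $4$ remain uncharacterized. Your diagnosis of where the difficulty sits is therefore accurate, but neither your sketch nor the paper closes it.
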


Let $G$ be a factor-critical graph with connectivity $3$. Suppose that for a 3-cut $S=\{s_1,s_2,s_3\}$, the graph $G -S$ has two components $A$ and $B$ with at least $3$ vertices. 
Then we introduce three possible configurations for the graph $G$ with respect to $A,B$ and $S$ as follows.

\begin{itemize}
\item \textbf{Type VI}: $A\cong K_{2p+1}$, $B \cong K_{2q+1}$ for $p,q\geq 1$ and there exist $a\in A$ and $b\in B$ such that $s_1$ is complete to both $A-a$ and $B-b$, $s_2$ is complete to  $B\cup \{a\}$, and $s_3$ is complete to $A\cup \{b\}$  (see Figure \ref{fig: FC- ESE graph (aa)}). \medskip	
\item \textbf{Type VII}: $A\cong K_{2p+1}$, $B \cong K_{2q+1}$ for $p,q\geq 1$ and there is a non-empty partition $A_1,A_2,A_3$  of $A$  such that each $s_i$ is complete to both $B$ and $A-A_i$ (see Figure \ref{fig: FC- ESE graph (bb)}). \medskip
\item \textbf{Type VIII}: $A\cong K_{2p+1}$ (resp. $K_{2p}$), $B \cong K_{2q+1}$ (resp. $K_{2q}$)  for $p,q\geq 1$ (resp. $p,q\geq 2$) and there is a non-empty partition $A_1,A_2$  of $A$  such that $s_1s_2\in E(G)$,  each one of $s_1,s_2$ is complete to both $B$ and $A_1$, and $s_3$ is complete to both $B$ and $A_2$ (see Figure \ref{fig: FC- ESE graph (cc)}). \medskip
\end{itemize}

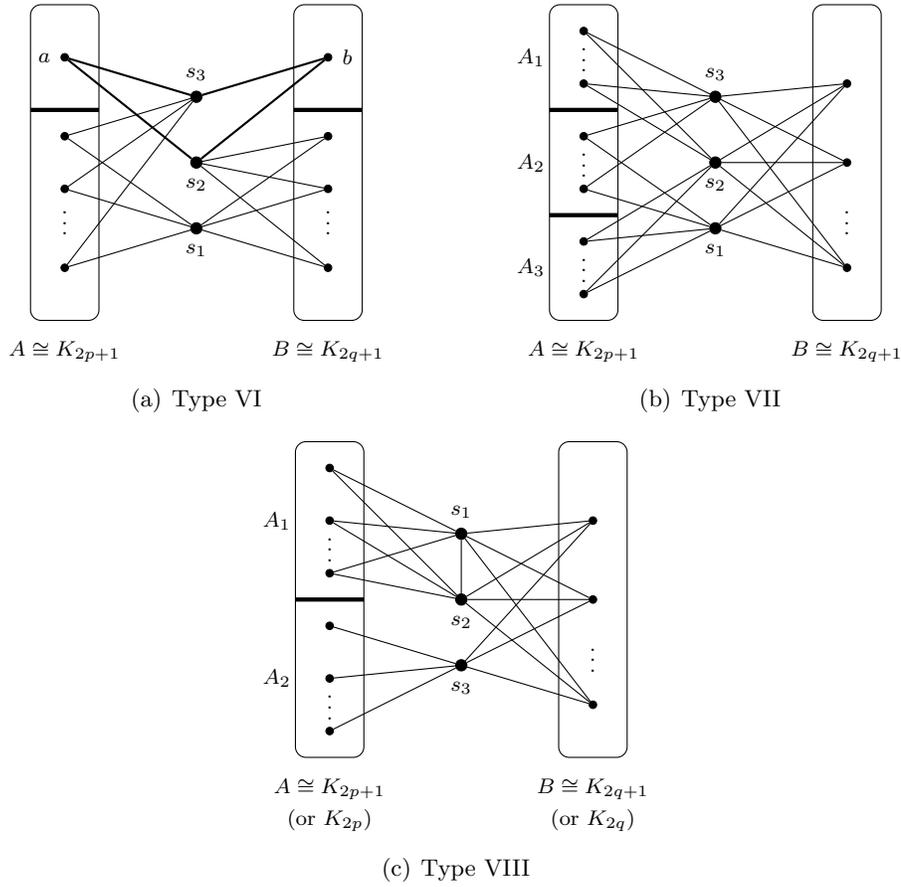
\begin{figure}[htb]
\centering     %%% not \center
\subfigure[Type VI]{\label{fig: FC- ESE graph (aa)}
\begin{tikzpicture}[scale=.7]
\draw [rounded corners] (-.65,-3) rectangle (.65,3);
\draw[ultra thick] (-.65,1)--(.65,1);
\node [noddee2] at (0,2) (a) [label=left: \scriptsize $a$]  {};
\node [noddee2] at (0,.5) (a1)  {};
\node [noddee2] at (0,-.5) (a2)  {};
\node  at (0,-1)   {\scriptsize $\vdots$};
\node [noddee2] at (0,-2) (a3)  {};
\draw [rounded corners] (4.35,-3) rectangle (5.65,3);
\draw[ultra thick] (4.35,1)--(5.65,1);	
\node [noddee2] at (5,2) (b) [label=right: \scriptsize $b$]  {};	
\node [noddee2] at (5,.5) (b1)  {};
\node [noddee2] at (5,-.5) (b2)  {};
\node  at (5,-1)  {\scriptsize $\vdots$};
\node [noddee2] at (5,-2) (b3)  {};
\node [noddee1] at (2.5,1.25) (s1) [label=above: \scriptsize $s_3$] {}
	edge [thick] (a)
	edge [thick] (b)
	edge [] (a1)
	edge [] (a2)
	edge [] (a3);
\node [noddee1] at (2.5,0) (s2) [label=below: \scriptsize $s_2$] {}
	edge [thick] (a)
	edge [thick] (b)
	edge [] (b1)
	edge [] (b2)
	edge [] (b3);
\node [noddee1] at (2.5,-1.25) (s3) [label=below: \scriptsize $s_1$]  {}
	edge [] (a1)
	edge [] (a2)
	edge [] (a3)
	edge [] (b1)
	edge [] (b2)
	edge [] (b3);
\node at (0,-3.6) {\scriptsize  $A\cong  K_{2p+1}$};
\node at (5,-3.6) {\scriptsize $B\cong K_{2q+1}$};	
\end{tikzpicture} }
\hspace*{1cm}
\subfigure[Type VII]{\label{fig: FC- ESE graph (bb)}
\begin{tikzpicture}[scale=.7]
\draw [rounded corners] (-.65,-3) rectangle (.65,3);
\draw[ultra thick] (-.65,1)--(.65,1);
\draw[ultra thick] (-.65,-1)--(.65,-1);
\node [noddee2] at (0,2.5) (a1)  {};
\node  at (0,2.1)  {\scriptsize $\vdots$};
\node [noddee2] at (0,1.5) (a11)  {};
\node [noddee2] at (0,.5) (a2)  {};
\node  at (0,.1)  {\scriptsize $\vdots$};
\node [noddee2] at (0,-.5) (a22)  {};
\node [noddee2] at (0,-1.5) (a3)  {};
\node  at (0,-1.9)  {\scriptsize $\vdots$};
\node [noddee2] at (0,-2.5) (a33)  {};
\node at (-1,2) {\scriptsize  $A_1$};
\node at (-1,0) {\scriptsize  $A_2$};
\node at (-1,-2) {\scriptsize  $A_3$};
\draw [rounded corners] (4.35,-3) rectangle (5.65,3);
\node [noddee2] at (5,1.5) (b)  {};	
\node [noddee2] at (5,0) (b1)  {};
\node  at (5,-1)  {\scriptsize $\vdots$};
\node [noddee2] at (5,-2) (b3)  {};
\node [noddee1] at (2.5,1.25) (s1) [label=above: \scriptsize $s_3$] {}
	edge [] (a1)
	edge [] (a11)
	edge [] (a2)
	edge [] (a22)
	edge [] (b)
	edge [] (b1)
	edge [] (b3);
\node [noddee1] at (2.5,0) (s2) [label=below: \scriptsize $s_2$] {}
	edge [] (a1)
	edge [] (a11)
	edge [] (a3)
	edge [] (a33)
	edge [] (b)
	edge [] (b1)
	edge [] (b3);
\node [noddee1] at (2.5,-1.25) (s3) [label=below: \scriptsize $s_1$]  {}
	edge [] (a2)
	edge [] (a22)
	edge [] (a3)
	edge [] (a33)
	edge [] (b)
	edge [] (b1)
	edge [] (b3);
\node at (0,-3.6) {\scriptsize  $A\cong  K_{2p+1}$};
\node at (5,-3.6) {\scriptsize $B\cong K_{2q+1}$};	
\end{tikzpicture} }
\subfigure[Type VIII]{\label{fig: FC- ESE graph (cc)}
\begin{tikzpicture}[scale=.7]
\draw [rounded corners] (-.65,-3) rectangle (.65,3);
\draw[ultra thick] (-.65,0)--(.65,0);
\node [noddee2] at (0,2.5) (a1)  {};
\node [noddee2] at (0,1.5) (a11)  {};
\node  at (0,1.1)  {\scriptsize $\vdots$};
\node [noddee2] at (0,.5) (a111)  {};
\node [noddee2] at (0,-.5) (a2)  {};
\node [noddee2] at (0,-1.5) (a22)  {};
\node  at (0,-1.9)  {\scriptsize $\vdots$};
\node [noddee2] at (0,-2.5) (a222)  {};
\node at (-1,1.5) {\scriptsize  $A_1$};
\node at (-1,-1.5) {\scriptsize  $A_2$};
\draw [rounded corners] (4.35,-3) rectangle (5.65,3);
\node [noddee2] at (5,1.5) (b)  {};	
\node [noddee2] at (5,0) (b1)  {};
\node  at (5,-1)  {\scriptsize $\vdots$};
\node [noddee2] at (5,-2) (b3)  {};
\node [noddee1] at (2.5,1.25) (s1) [label=above: \scriptsize $s_1$] {}
	edge [] (a1)
	edge [] (a11)
	edge [] (a111)
	edge [] (b)
	edge [] (b1)
	edge [] (b3);
\node [noddee1] at (2.5,0) (s2) [label=below: \scriptsize $s_2$] {}
	edge [] (a1)
	edge [] (a11)
	edge [] (a111)
	edge [] (b)
	edge [] (b1)
	edge [] (b3)
	edge [] (s1);
\node [noddee1] at (2.5,-1.25) (s3) [label=below: \scriptsize $s_3$]  {}
	edge [] (a2)
	edge [] (a22)
	edge [] (a222)
	edge [] (b)
	edge [] (b1)
	edge [] (b3);
\node at (0,-3.6) {\scriptsize $A\cong  K_{2p+1} $};
\node at (0,-4.2) {\scriptsize $ (\mbox{or } K_{2p})$};
\node at (5,-3.6) {\scriptsize $B\cong K_{2q+1}$};
\node at (5,-4.2) {\scriptsize $(\mbox{or } K_{2q})$};
\end{tikzpicture} }
\caption{Factor-critical $\ECE$-graphs with connectivity $3$.}
\label{fig:ECE, conn 3}
\end{figure}

\begin{nlem}\label{lem: FC ECE,  connectivity 3}
Let $G$ be a 3-connected graph with a 3-cut set $S$ such that $G - S$ has two components with at least 3 vertices. Then $G$ is factor-critical ECE if and only if it is of one of the three types depicted in Figure \ref{fig:ECE, conn 3}. 
\end{nlem}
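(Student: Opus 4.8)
I would prove the two implications separately, following the template of the connectivity-$2$ analysis of Lemma~\ref{lem: G is ECE then G has five conf}, but now exploiting that Theorem~\ref{thm:kotbic conn 3} makes both components of $G-S$ \emph{complete}. For the (``if'') direction I would fix a graph $G$ of one of Types VI--VIII and verify the three defining properties in turn. \emph{Factor-criticality}: for each $v$ I would exhibit a perfect matching of $G-v$; since $A$ and $B$ are cliques and each $s_i$ has neighbours in both, this reduces to matching the even-sized remainder of each clique internally and using the $S$--$A$ and $S$--$B$ edges to absorb the leftover parities, a short split on whether $v\in A$, $v\in B$ or $v\in S$. \emph{Equimatchability}: by Lemma~\ref{lem: defn equim} it suffices to rule out an independent set $I$ with $|I|=3$ and $G-I$ having a perfect matching; as $A,B$ are cliques, such an $I$ meets each of $A,B$ in at most one vertex, so only a handful of patterns for $I\cap S$ occur, and in each I would point to an odd component or a parity obstruction in $G-I$. \emph{Edge-criticality}: by Lemma~\ref{lem: critical edge iff there is a matching} I would, for every edge type (inside $A$, inside $B$, from $S$ to $A$, from $S$ to $B$, and the edge $s_1s_2$ in Type VIII), build a matching containing that edge and saturating the neighbourhood of its endpoints, exactly as in Lemma~\ref{lem:All graphs belonging to F are ECE}.

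For the (``only if'') direction I would begin with Theorem~\ref{thm:kotbic conn 3} and Proposition~\ref{prop. G-S has two comp}: a $3$-connected factor-critical $\ECE$-graph has at least $9=2\cdot3+3$ vertices, so $G-S$ has exactly two components $A\cong K_{|A|}$ and $B\cong K_{|B|}$, with $|A|\equiv|B|\pmod 2$ since $|G|$ is odd. Each $s_i$ has a neighbour in both $A$ and $B$, for otherwise $S\setminus\{s_i\}$ would be a $2$-cut. The real work is to pin down the adjacencies between $S$ and $A\cup B$ together with the edges inside $S$. The main lever is the isolating-matching result: for $v\in A$, a minimal matching isolating $v$ leaves, by Lemma~\ref{lem: gen isolating matching}, a $K_{2n}$ or $K_{n,n}$; since $G$ has no edge between $A$ and $B$, this remnant cannot straddle both cliques, so it sits on one side together with part of $S$, and this rigidly restricts each $N_A(s_i)$ and $N_B(s_i)$. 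Criticality of the edges joining $S$ to $A$ or $B$ (Lemma~\ref{lem: critical edge iff there is a matching}) and the absence of a dominating edge (Corollary~\ref{cor:no-dom-edge}) then forbid oversized neighbourhoods and, in particular, stop any $s_i$ from being complete to both $A$ and $B$.

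The decisive dichotomy, and the main obstacle, is whether $S$ is independent or carries an edge. If $S$ is independent, then $S$ itself is an independent triple and $G-S$ is the disjoint union of the cliques $A,B$, which has a perfect matching precisely when $|A|$ and $|B|$ are both even; hence Lemma~\ref{lem: defn equim} forces them both odd (using $|A|\equiv|B|$), and I would then show, by combining the isolating-matching constraints with Corollary~\ref{cor: def EFC}, that the triple of neighbourhoods either partitions $A$, giving Type VII, or realises the single crossed pattern around two distinguished vertices $a\in A$ and $b\in B$, giving Type VI. If instead $S$ carries an edge, this parity obstruction disappears, so both parities of $|A|,|B|$ survive, and I would show $S$ carries exactly one edge, say $s_1s_2$ (a second would create a dominating edge or a forbidden remnant), with $s_1,s_2$ sharing their $A$-neighbourhood $A_1$ and $s_3$ covering $A_2=A\setminus A_1$, yielding Type VIII. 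I expect the hardest part to be precisely this simultaneous control of the three neighbourhoods in $A$, the three in $B$, the optional edge $s_1s_2$ and the two parities, while excluding the many near-miss configurations that are factor-critical yet fail equimatchability or edge-criticality---an enumeration strictly more intricate than Subcases~2.1--2.4 of Lemma~\ref{lem: G is ECE then G has five conf}, since $S$ now has three vertices instead of two.
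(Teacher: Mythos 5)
Your plan follows the paper's proof essentially verbatim: Theorem \ref{thm:kotbic conn 3} to make both components complete, the dichotomy on whether $S$ is independent (forcing both components odd via Lemma \ref{lem: defn equim} and leading to Types VI/VII) versus carrying exactly one edge (Type VIII, with both parities handled), and Lemmas \ref{lem: critical edge iff there is a matching} and \ref{lem: gen isolating matching} as the main levers for pinning down the $S$--$A$ and $S$--$B$ adjacencies. The only cosmetic difference is in the ``if'' direction, where the paper delegates the routine equimatchability and edge-criticality checks to a computer run on minimum representatives instead of writing them out by hand as you propose.
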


\begin{proof}
Let $G$ be a factor-critical $\ECE$-graph, and let $S=\{s_1,s_2,s_3\}$ be a $3$-cut such that $G-S$ has two components with at least 3 vertices. By Theorem \ref{thm:kotbic conn 3}, $G - S$ has exactly two components $A,B$ and both are complete graphs. Clearly, $A$ and $B$ are either  both odd or both even since $G$ is odd. Recall also  that every vertex in $S$ has at least one neighbour in both $A$ and $B$ since $G$ is a graph with connectivity $3$. Observe that  for every vertex $a \in  A$, the graph $A-a$ has a perfect matching whenever $A$ is odd, we denote by $M_a$ such a perfect matching. By symmetry, the same holds for $B$ as well.
We will prove the claim under two main cases; $S$ is independent or not.\medskip

\textbf{\textit{Case 1:}} $S$ is an independent set. 

We first note that each one of $A$ and $B$ has odd order, since otherwise the graph $G-S$ consisting of two complete graphs has a perfect matching, contradicting to the equimatchability of $G$ by Lemma \ref{lem: defn equim}.  Thus we have  $A\cong K_{2p+1}$ and $B=K_{2q+1}$ for some $p,q\geq 1$.

We claim that every vertex in $A \cup B$ is adjacent to at least two vertices of $S$. Assume without loss of generality that there is a vertex $a \in A$ not adjacent to  $s_2,s_3 \in S$, then $\{a,s_2,s_3\}$ is an independent set. It follows that  for $b\in N_B(s_1)$,  the set $M_a \cup M_b \cup \{s_1b\}$ is a perfect matching in $G- \{a,s_2,s_3\}$, contradicting to the equimatchability of $G$ by Lemma \ref{lem: defn equim}. Hence the claim holds. \medskip

\textbf{\textit{$\quad$ Subcase 1.1:}} There is a vertex of $S$, without loss of generality $s_1$, such that $s_1a\notin E(G)$, $s_1b \notin E(G)$ for some $a \in A$ and $b \in B$.
 
Recall that every vertex in $A \cup B$ is adjacent to at least two vertices of $S$. %Then $S_1$ is complete to $A-a$ and $B-b$, moreover 
Then both $a$ and $b$ are complete to $\{s_2,s_3\}$. If $s_2$ and $s_3$ have two disjoint neighbours in $A-a$, say  $a_1s_2,a_2s_3 \in E(G)$ for $a_1,a_2 \in A-a$, then there is a perfect matching $M_a$ of $A-a$ containing $a_1a_2$, and $(M_a - \{a_1a_2\}) \cup M_b \cup \{a_1s_2,a_2s_3\}$  is a perfect matching in $G - \{a,s_1,b\}$ where $\{a,s_1,b\}$ is an independent set, a contradiction to the equimatchability of $G$ by Lemma \ref{lem: defn equim}. This implies that one of $s_2,s_3$ is not adjacent to any  vertex in $A-a$ since every vertex in $A \cup B$ is adjacent to at least two vertices of $S$, say without loss of generality $s_2\notin N(A-a)$. That is, $s_2$ is adjacent to only $a$ in $A$, and $s_3$ is complete to $A-a$. Moreover, we infer that $s_1$ is complete to $A-a$.  On the other hand, we conclude by symmetry that one of $s_2,s_3$ is not adjacent to any  vertex in $B-b$. If $s_2$ is not adjacent to any  vertex in $B-b$, then both  $A-a$ and $B-b$ are complete to $\{s_1,s_3\}$, and the vertex $s_2$ is adjacent to only $a$ and $b$. In such a case, for $a_1,a_2\in A-a$ and $b_1\in B-b$, the set $(M_a- \{a_1a_2\}) \cup M_{b_1} \cup \{a_2s_1,as_3\}$  is a perfect matching in $G - \{a_1,s_2,b_1\}$, a contradiction by Lemma \ref{lem: defn equim}. Therefore, we may assume that  $s_2$ is complete to $B-b$, and $s_3$ is adjacent to only $b$ in $B$.  Hence $G$ is of Type VI in Figure \ref{fig: FC- ESE graph (aa)}. \medskip
  
\textbf{\textit{$\quad$ Subcase 1.2:}} Suppose that each vertex in $S$ is complete to $A$ or $B$.

In this case, all vertices of $S$ must be complete to the same part of $G -S$. Indeed, if $s_1$ and $s_2$ are complete to $B$, and $s_3$ is complete to $A$, then we observe that $N(\{s_3,b\})=V(G)$ for $b \in N_B(s_3)$, a contradiction with the criticality of the edge $s_3b$ by Corollary \ref{cor:no-dom-edge}. Therefore, all vertices of $S$ are complete to either $A$ or $B$, say without loss of generality $B$. Let now consider the edges between $A$ and $S$. If a vertex $a \in A$ is complete to $S$, then $N(\{a,s_1\})=V(G)$, contradicting the critically of the edge $as_1$ by Corollary \ref{cor:no-dom-edge}. Thus every vertex in $A$ is adjacent to exactly two vertices of $S$. Hence $G$ is of Type VII in Figure \ref{fig: FC- ESE graph (bb)}. \medskip

\textbf{\textit{Case 2:}} $S$ is not an independent set, let without loss of generality $s_1s_2 \in E(G)$.   \medskip 
  
\textbf{\textit{$\quad$ Subcase 2.1:}}  Both $A$ and $B$ are odd.
  
First we observe that the vertex $s_3$ is complete to $A$ or $B$. Indeed if there exist $a \in A$, $b \in B$ such that $as_3,bs_3 \notin E(G)$, then $G - \{a,s_3,b\}$ has a perfect matching $M_a \cup M_b \cup \{s_1s_2\} $, a contradiction with the equimatchability of $G$ by Lemma \ref{lem: defn equim}. Thus, assume without loss of generality that $s_3$ is complete to $B$. Remark that none of $s_1,s_2$ is complete to $A$ since otherwise letting without loss of generality $s_2$ to be complete to $A$, for $b \in N_B(s_2)$ we have $N(\{s_2,b\})=V(G)$, contradicting the critically of the edge $s_2b$ by Corollary \ref{cor:no-dom-edge}. Thus there exist  $a_1,a_2 \in A$ such that $a_1s_1,a_2s_2 \notin E(G)$.

Now we claim that $s_1$ and $s_2$ are complete to $B$. Assume to the contrary that $s_1$ is not complete to $B$. Then there exists $b_1 \in B$ such that $b_1s_1 \notin E(G)$. In this case, if $s_2$ is adjacent to a vertex $b_2 \in B-b_1$, then $\{a_1,s_1,b_1\}$ is an independent set, and there is a perfect matching of $B-b_1$ containing $b_2b_3$ for some $b_3 \in B-\{b_1,b_2\}$; then
$M_{a_1} \cup (M_{b_1}-b_2b_3) \cup \{s_2b_2,s_3b_3\}$ is a perfect matching in $G - \{a,s_1,b\}$, a contradiction to the equimatchability of $G$ by Lemma \ref{lem: defn equim}.  Then there is no such a vertex $b_2 \in B-b_1$, that is, $s_2$ is adjacent to only $b_1$  in $B$. It follows that for $b_3 \in N_{B}(s_1)$ and $b_2 \in B-\{b_1,b_3\}$, we have  $s_2a_1,s_2b_2 \notin E(G)$, and $M_{a_1} \cup (M_{b_1}-b_2b_3) \cup \{s_1b_3,s_3b_1\}$ is a perfect matching of $G - \{a_2,s_2,b_2\}$, again a contradiction by Lemma \ref{lem: defn equim}.  Consequently, $S$ is complete to $B$.

On the other hand, if there exists one of edges $s_1s_3$, $s_2s_3$, say $s_2s_3 \in E(G)$, then for $a \in N_A(s_2)$, we have $N(\{a,s_2\})=V(G)$, it is a contradiction to the edge-criticality of $G$ by Corollary \ref{cor:no-dom-edge}. Therefore, $s_3$ is not adjacent to $s_1$ and $s_2$. 
In the meanwhile, every vertex in $A$ is adjacent to at least one vertex of $S$. Indeed, if $a \in A$ has no neighbor in $ S$, then $\{a,s_1,s_3\}$ is an independent set. It follows that  for $b\in N_B(s_2)$,  the set $M_a \cup M_b \cup \{s_1b\}$ is a perfect matching in $G- \{a,s_2,s_3\}$, contradicting to the equimatchability of $G$ by Lemma \ref{lem: defn equim}. Hence the claim holds. 

Next we claim that every vertex in $A$ is complete to either $\{s_1,s_2\}$ or $\{s_3\}$. First, if a vertex $a \in A$ is adjacent to only one of $s_1,s_2$ in $S$, say $as_1,as_3 \notin E(G)$ and $as_2\in E(G)$, then for $b \in B$, the set $M_a \cup M_b \cup \{s_2b\}$ is a perfect matching in $G - \{a,s_1,s_3\}$, it is a contradiction with equimatchability of $G$  by Lemma \ref{lem: defn equim}.  Then, every vertex in $A$ is complete to  $\{s_3\}$ or two vertices in $\{s_1,s_2,s_3\}$.
Moreover, if the sets $\{s_1,s_2\}$ and $\{s_3\}$ have a common neighbour in $A$, say $a\in N(s_2)\cap N(s_3)$, then $N[\{a,s_2\}]=V(G)$, a contradiction by Lemma \ref{lem: critical edge iff there is a matching}. We then conclude that every vertex in $A$ is complete to either $\{s_1,s_2\}$ or $s_3$. 
%It remains to show that the neighbours of $s_1$ and $s_2$ are the same in $A$. Assume for a contradiction that $as_1 \in E(G)$, $as_2 \notin E(G)$, then $a$ should be adjacent to $s_3$ since $as_2 \notin E(G)$. Then $s_2$ and $s_3$ has a common neighbour in $A$, a contradiction. Thus every vertex in $A$ is complete to either $\{s_1,s_2\}$ or $\{s_3\}$. 
Hence $G$ is of Type VIII in Figure \ref{fig: FC- ESE graph (cc)}.  \medskip
 
\textbf{\textit{$\quad$ Subcase 2.2:}}  Both $A$ and $B$ are even. 
  
Clearly, $|A|\geq 4$ and $|B|\geq 4$. First, if a vertex $s_i\in S$ is complete to neither $A$ nor $B$, say without loss of generality $s_1$, then there exist $a \in A$, $b \in B$ such that $as_1,bs_1 \notin E(G)$. Consider the independent set $\{ a,s_1,b \}$. If there are two vertices $a' \in A-a$ and $b' \in B-b$ such that either $a's_2,b's_3 \in E(G)$ or $a's_3,b's_2 \in E(G)$, assume without loss of generality $a's_2,b's_3 \in E(G)$, then since $A$ and $B$ are even cliques, they admit perfect matchings $M_1$ containing $aa'$ and $M_2$ containing $bb'$, respectively, such that $(M_1- \{aa'\})\cup (M_2- \{bb'\})\cup \{a's_2,b's_3\}$ is a perfect matching in $G - \{a,s_1,b\}$,  a contradiction with equimatchability of $G$ by Lemma \ref{lem: defn equim}. Therefore, we assume that there are no such vertices $a',b'$. This implies that either $s_2$ is adjacent to only $a$ in $A$ or $s_3$ is adjacent to only $b$ in $B$. 
The former implies that $S'=\{a,s_1,s_3\}$ is a 3-cut set, and $G - S'$ has two components $A-a$ and $B \cup \{s_1\} $ with at least $3$ vertices since  $|A|\geq 4$. Hence, boils down to the case where $A$ and $B$ are odd. Similarly, if a vertex of $S$ is adjacent to a unique vertex in $A$ or $B$, then it again turns the case that $A$ and $B$ are odd. Therefore we may assume that every vertex in $S$ is adjacent to at least two vertices in each one of $A$ and $B$. However, this contradicts again with the assumption that  there are no vertices $a',b'$ as described above. We therefore conclude that every vertex in $S$ is complete to $A$ or $B$. Moreover, all of them are complete to the same part of $A\cup B$, since otherwise if $s_2$ is complete to $A$ while $s_3$ is complete to $B$, then for $b \in N_B(s_2)$, we have $N(\{s_2,b\})=V(G)$, it is a contradiction to the edge-criticality of $G$ by Corollary \ref{cor:no-dom-edge}. Thus we can assume that $S$ is complete to $B$.  

Finally, we claim that every vertex of $A$ is complete to either $\{s_1,s_2\}$ or $\{s_3\}$. Similarly as above, if a vertex $a \in A$ is adjacent to only one of $s_1,s_2$ in $G$, say without loss of generality $as_1,as_3 \notin E(G)$ and $as_2\in E(G)$, then there exists $a' \in N_{A-a}(s_2)$ since every vertex in $S$ is adjacent to at least two vertices in each one of $A$ and $B$. It then follows that for a perfect matching $M_1$ of $A$ containing $aa'$ and a perfect matching $M_2$ of $B$, the set $(M_1-aa') \cup M_2 \cup \{s_2a'\}$ is a perfect matching in $G -\{a,s_1,s_3\}$, a contradiction with the equimatchability of $G$ by Lemma \ref{lem: defn equim}.  Moreover, if the sets $\{s_1,s_2\}$ and $\{s_3\}$ have a common neighbour in $A$, say $a\in N(s_2)\cap N(s_3)$, then $N(\{a,s_2\})=V(G)$, a contradiction to the edge-criticality of $G$ by Corollary \ref{cor:no-dom-edge}. Hence  every vertex in $A$ is complete to either $\{s_1,s_2\}$ or $\{s_3\}$. It follows that $G$ is of Type VIII in Figure \ref{fig: FC- ESE graph (cc)}.

Now, let us show the converse. For each class, one can check that the removal of no independent set of size 3 leaves a graph with a perfect matching, proving equimatchability by Lemma \ref{lem: defn equim}. Moreover, one can check that every edge is critical using Lemma \ref{lem: critical edge iff there is a matching}. We did these routine checks by computer for each type of configuration where the sets $A$ and $B$ are chosen as minimum representatives. Indeed, increasing their sizes would only create edges similar to the ones already considered. By a similar approach as in Lemma \ref{lem:All graphs belonging to F are ECE}, we obtain that all graphs having one of the three configurations in Figure \ref{fig:ECE, conn 3}  are $\ECE$.
\end{proof}

Next, we deal with the case where for a $k$-cut set $S$, one of the components in $G\sm S$ has exactly two vertices.

%
%-\medskip
%
%{\color{red} Open case: $G$ is a 3-connected graph with a 3-cut $S$ such that $G - S$ has precisely two components in which one of them has at most 2 vertices.
%}\medskip
%
%-\medskip

\begin{nthm}\cite{Kotbic}\label{thm:kotbic-k-conn-two-vertices}
Let $G$ be a $k$-connected equimatchable factor-critical graph with a $k$-cut $S$, where $k \geq 3$. Assume
that $G-S$ has a component $C$ with at least $k$ vertices and $G-(S \cup C)$ has a component with exactly two vertices. Then $G-S$ has exactly two components. Furthermore, if $S$ contains an edge, then $C$ is a complete graph. 
\end{nthm}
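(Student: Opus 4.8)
The plan is to establish the two conclusions separately: the number of components will come from a parity count feeding into Lemma~\ref{lem:kotbic two-comp}, while the completeness of $C$ will come from an isolating-matching analysis in which the edge inside $S$ is the decisive ingredient.

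For the first assertion I would simply count vertices. Write $D$ for the two-vertex component of $G-(S\cup C)$. The components $C$ and $D$ of $G-S$ already give $|V(G)|\ge |S|+|C|+|D|\ge k+k+2=2k+2$, and since $G$ is factor-critical its order is odd, so in fact $|V(G)|\ge 2k+3$. Thus Lemma~\ref{lem:kotbic two-comp} applies verbatim and $G-S$ has precisely two components; these are $C$ and $D$, so $D=\{d_1,d_2\}$ with $d_1d_2\in E(G)$ is the unique component other than $C$. Before turning to the second assertion I would record two consequences of $k$-connectivity used throughout: since $C$ and $D$ lie in different components of $G-S$ we have $N(D)\setminus D\subseteq S$, and if this set had fewer than $k$ vertices it would be a cut of size $<k$ separating $D$ from $C$; hence $N(D)\setminus D=S$. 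Moreover $\delta(G)\ge k$ together with $d_1d_2\in E(G)$ forces each $d_i$ to have at least $k-1$ neighbours in $S$.

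For the second assertion, suppose $S$ contains an edge, say $s_1s_2\in E(G)$, and assume for contradiction that $C$ is not complete, so there exist non-adjacent $c,c'\in C$. Since $d_1$ has no neighbour in $C$, the triple $\{c,c',d_1\}$ is independent, and the target is to produce a perfect matching of $G-\{c,c',d_1\}$, contradicting Lemma~\ref{lem: defn equim}. To tame the unknown internal structure of $C$, I would first isolate $d_1$: taking a minimal matching $M$ isolating $d_1$ and applying Lemma~\ref{lem: gen isolating matching}, the leftover is a $K_{2n}$ or a $K_{n,n}$, and minimality (no edge of $M$ can lie inside $C$, since $N(d_1)\cap C=\emptyset$) keeps $M$ from consuming $C$ more than is forced through $N(d_1)\cap S$; combined with Lemma~\ref{lem:randomlymatchable} this pins $C$ down as complete or balanced complete bipartite. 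The edge $s_1s_2$ then excludes the bipartite case: if $C\cong K_{n,n}$ with parts $X,Y$, choosing $c,c'$ in a common part makes $\{c,c',d_1\}$ independent, and one uses the two ``free'' edges $s_1s_2$ and $d_1d_2$, the $\ge k-1$ edges from $d_2$ into $S$, and the $S$--$C$ adjacencies to build a perfect matching of $G-\{c,c',d_1\}$, contradicting Lemma~\ref{lem: defn equim}. Hence $C\cong K_{2n}$ is complete.

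The main obstacle is the matching bookkeeping in the last step (and the precise choice of $M$ in the isolating step): one must guarantee that, after committing $s_1s_2$ and matching $d_2$ into $S$, the cut $S$ and the remaining vertices of $C$ can still be saturated, and this must be done \emph{without} assuming the very completeness of $C$ one is proving. The parity count ($|V(G)|$ odd, so $|V(G)|-3$ even) keeps all matching targets even, and the bounds $|N(d_i)\cap S|\ge k-1$ give enough room to route $d_2$ into $S$ away from $\{s_1,s_2\}$; the crux is organising these choices uniformly in $k$ so that the bipartite structure of $C$ becomes the only possible obstruction to matchability, which is exactly the obstruction that the edge $s_1s_2$ removes.
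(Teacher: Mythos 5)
A preliminary remark: the paper does not prove this theorem at all --- it is imported verbatim from Eiben and Kotrb\v{c}\'{\i}k \cite{Kotbic} --- so your attempt can only be assessed on its own merits. Your proof of the first assertion is correct and complete: $|V(G)|\ge |S|+|C|+|D|\ge k+k+2=2k+2$, factor-criticality forces odd order, hence $|V(G)|\ge 2k+3$ and Lemma \ref{lem:kotbic two-comp} applies directly. The auxiliary observations $N(D)\sm D=S$ and $|N(d_i)\cap S|\ge k-1$ are also sound.

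The second assertion, however, contains two genuine gaps. First, isolating $d_1$ does not ``pin down'' $C$. A minimal matching $M$ isolating $d_1$ must saturate $N(d_1)\subseteq\{d_2\}\cup S$, and while you are right that no edge of $M$ lies entirely inside $C$, up to $k$ of its edges may still have one endpoint in $C$. Lemma \ref{lem: gen isolating matching} therefore only tells you that $C$ \emph{minus those up to $k$ vertices}, together with whatever remains of $S$, induces a $K_{2n}$ or a $K_{n,n}$; this says nothing about the adjacencies of the deleted $C$-vertices, so you cannot conclude that $C$ itself is complete or balanced complete bipartite. Closing this would require varying $M$ over enough isolating matchings that every pair of vertices of $C$ survives in some leftover, which you neither do nor show to be possible. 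Second, the concluding matching construction is not actually carried out, and as sketched it cannot work: you list $d_1d_2$ among the ``free'' edges, but $d_1$ lies in the deleted independent set $\{c,c',d_1\}$, so $d_1d_2$ is unavailable and $d_2$ must be matched into $S$. More seriously, after committing $s_1s_2$, an edge from $d_2$ into $S$, and a matching of $X\sm\{c,c'\}$ into $Y$, the two leftover vertices of $Y$ must be matched into $S$ and the residual vertices of $S$ must be saturated among themselves or by further $S$--$C$ edges; nothing in your argument guarantees that this is possible (for small $k$ the na\"{\i}ve count already over-spends $S$), and this is precisely the step you flag as ``the crux'' without resolving it. Since both the structural reduction and the final contradiction rest on these unproved claims, the proof of the second assertion is incomplete.
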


\begin{nlem}\label{lem:component-2-vertices}
Let $G$ be a $k$-connected factor-critical graph with at least $2k+3$ vertices for $k\geq 3$ and  a $k$-cut set $S$ such that $G - S$ has a component with exactly $2$ vertices. Then $G$ is ECE-graph if and only if it is of Type VIII in Figure \ref{fig: FC- ESE graph (cc)}.
%$G - S$ has exactly two components $A$ and $B$ such that the followings hold.
%\begin{itemize}
%\item[$(i)$] Both $S$ and $G - S$ consist of two disjoint complete graphs.
%\item[$(ii)$] Every vertex of $S$ is complete to either $A$ or $B$ (not both). Let $S=S_A \cup S_B$ where  $S_A$ (resp. $B$) consists of the vertices of $S$ which are complete to $A$ (resp. $B$).  
%\item[$(iii)$] If a component of $S$ is complete to $A$ (resp. $B$), then $S$ is complete to $A$ (resp. $B$). In particular, for a partition $A_1,A_2$ (resp. $B_1,B_2$) of $A$ (resp. $B$), each $S_i$ is complete to a subset $A_i$ (resp. $B_i$) for $i=1,2$. 
%\item[$(iii)$] If $S$ is not complete to $A$ or $B$, there exists a partition $A_1,A_2$ (resp. $B_1,B_2$) of $A$ (resp. $B$) such that  $A_i$ is complete to $S_B\cap V(S_i)$, and $B_i$ is complete to $S_A\cap V(S_i)$ for $i=1,2$. 
%%For $u,v \in S_A$ (resp. $S_B$), if $uv \in E(G)$ then $u,v$ are twin; if not then $u,v$ are partially-complete to $B$ (resp. $A$). 
%\end{itemize}
\end{nlem}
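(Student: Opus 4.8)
\emph{Plan.} The plan is to prove both implications, disposing of the converse quickly through the connectivity-$3$ analysis already carried out in Lemma \ref{lem: FC ECE,  connectivity 3}, and concentrating the effort on the forward direction. Throughout, assume $G$ is $k$-connected factor-critical with $|V(G)|\geq 2k+3$, $k\geq 3$, and let $S$ be a $k$-cut such that $G-S$ has a component $B$ with $|B|=2$.

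\emph{Setup.} Suppose first that $G$ is $\ECE$. Since $G$ is a $k$-connected factor-critical $\ECE$-graph, Proposition \ref{prop. G-S has two comp} gives that $G-S$ has exactly two components, which I call $A$ and $B$ (the prescribed one with $|B|=2$). As $B$ is connected it is an edge, so $B\cong K_2$, say $B=\{b_1,b_2\}$ with $b_1b_2\in E(G)$; and $|A|=|V(G)|-k-2\geq k+1\geq 4$. Two elementary facts will be used repeatedly: since $S$ is a minimum cut, every $s\in S$ has a neighbour in each of $A$ and $B$; and since the minimum degree is at least $k$ while $N(b_i)\subseteq\{b_{3-i}\}\cup S$, each $b_i$ is adjacent to at least $k-1$ vertices of $S$.

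\emph{Forward direction.} First I would feed this configuration into Theorem \ref{thm:kotbic-k-conn-two-vertices} with $C=A$, which is legitimate because $|A|\geq k$ and $G-(S\cup A)=B$ is a component with exactly two vertices; this re-derives that $G-S$ has exactly two components and, crucially, shows $A$ is complete whenever $S$ contains an edge. The bulk of the work is then a case analysis, organised exactly as in Lemmas \ref{lem: G is ECE then G has five conf} and \ref{lem: FC ECE,  connectivity 3}, that determines every adjacency between $S$ and $A\cup B$. The two workhorses are: (i) edge-criticality through Lemma \ref{lem: critical edge iff there is a matching}, most often in its degenerate form Corollary \ref{cor:no-dom-edge} --- if a candidate pattern yields an edge $uv$ with $N(\{u,v\})=V(G)$ (for instance a vertex of $S$ complete to $A$ while meeting enough of $S\cup B$), that edge cannot be critical; and (ii) equimatchability through Lemma \ref{lem: defn equim} --- if a pattern leaves an independent triple $I$ (typically $I=\{b_i,s,a\}$ with a non-neighbour $a\in A$, or $I\subseteq\{b_1,b_2,s\}$ when some $s$ misses both $b_i$) such that $G-I$ has a perfect matching, then $G$ is not equimatchable. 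Running these tests against the possible attachment patterns forces $S$ to contain an edge (hence $A$ complete), forces $A$ and $B$ to have matching parity, and pins the neighbourhoods down so that $S$ splits into an adjacent pair completing to one part and a single vertex completing to the complementary part: this is precisely the Type VIII incidence pattern, with the two-vertex clique $B=\{b_1,b_2\}$ occurring as the clique-part completed to a single cut vertex. In particular the analysis collapses to $k=3$, as the conclusion is a connectivity-$3$ structure.

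\emph{Converse and main obstacle.} For the reverse implication I would simply read a Type VIII graph off its \emph{standard} $3$-cut $\{s_1,s_2,s_3\}$: there $G-S$ consists of two complete components each with at least three vertices, so Lemma \ref{lem: FC ECE,  connectivity 3} already certifies that every Type VIII graph is factor-critical $\ECE$, which applies verbatim to those meeting the present hypothesis. The hard part is the forward case analysis, and within it the exclusion of $k\geq 4$: one must show that a $2$-vertex component, together with the minimum-degree constraint forcing each $b_i$ to hit almost all of $S$, is incompatible with every edge being critical unless $k=3$. I expect this to be discharged cleanly by the dominating-edge test of Corollary \ref{cor:no-dom-edge} (the many forced $S$--$B$ edges tend to create an edge $uv$ with $N(\{u,v\})=V(G)$) backed by the removable-triple test of Lemma \ref{lem: defn equim} --- the same long, configuration-by-configuration reasoning that the authors verify by computer on the minimal representatives.
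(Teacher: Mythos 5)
Your outline reproduces the paper's architecture faithfully at the level of scaffolding --- Proposition \ref{prop. G-S has two comp} for the two components, Theorem \ref{thm:kotbic-k-conn-two-vertices} to get completeness of the large component once $S$ contains an edge, and the converse read off from the connectivity-$3$ characterization --- but the forward direction is a promise rather than a proof. Everything you yourself label as ``the bulk of the work'' and ``the hard part'' (that $S$ must contain an edge, that the two-vertex component must induce an edge, that the attachments between $S$ and the two components collapse to the Type VIII pattern, and that $k\geq 4$ is impossible) is asserted as the expected outcome of ``running these tests'' and is never actually derived. That case analysis \emph{is} the content of the lemma; as written, nothing beyond the setup and the appeal to Theorem \ref{thm:kotbic-k-conn-two-vertices} is established.

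Beyond incompleteness, the toolkit you propose for the missing analysis is too weak. You name only Corollary \ref{cor:no-dom-edge} and Lemma \ref{lem: defn equim}, but the paper's elimination of the case where $S$ is independent rests essentially on Lemma \ref{lem: gen isolating matching}: one first rules out the two-vertex component being a non-edge by a connectivity count (a small cut set would otherwise appear inside $S\cup N_B(S)$), then uses isolating matchings to force the large component to be $K_{p,p+2}$, and finally exhibits an isolating matching whose complement is not randomly matchable. None of this is a routine application of the dominating-edge or removable-triple tests, and your expectation that those two suffice is not borne out by the actual argument. A smaller but genuine slip: the two-vertex component cannot occur as ``the clique-part completed to a single cut vertex'' of the Type VIII decomposition, because both clique parts of Type VIII have at least three vertices ($p,q\geq 1$ in the odd case, $p,q\geq 2$ in the even case); the Type VIII structure of the final graph is realized with respect to a \emph{different} $3$-cut than the given $S$, and identifying that cut is precisely part of what the omitted analysis has to do.
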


\begin{proof}
Let $G$ be a factor-critical ECE-graph satisfying the assumptions of the statement. 
By Lemma \ref{lem:kotbic two-comp}, $G-S$ has precisely two components $A$ and $B$. By assumption,  one of $A$ and $B$ consists of two vertices. Let without loss of generality $A=\{a_1,a_2\}$. Then $B$ has at least $k+1$ vertices since $|G|\geq 2k+3$ and $k\geq 3$.  It follows from Theorem \ref{thm:kotbic-k-conn-two-vertices} that if $S$ contains an edge, then $B$ is a complete graph.

We first suppose that $S$ is an independent set. If $A$ is independent set too, then  each $a_i$ is complete to $S$ since $d(v)\geq k$ for each $v\in V(G)$.  Notice that there is no matching $M$ between $B$ and $S$ saturating all vertices of $S$,  since otherwise $G-V(M)$ is disconnected and contradicts Lemma \ref{lem: gen isolating matching}. In such a case, there exist a subset $R\subset S$ of size $r\leq k-1$ such that $|T=N(R)\cap B|\leq r-1$. However, then $T\cup (S-R)$ with $t\leq k-1$ is a $t$-cut set, a contradiction with $G$ being $k$-connected. Thus, we can assume that $A$ is not independent set.
Then each vertex $a_i$ is adjacent at least $k-1$ vertices in $S$. So, for each pair $s_i,s_j\in S$, the set $\{s_i,s_j\}$ can be matched into $A$.  Notice also that $B$ does not have a perfect matching since otherwise $G-S$ would have a perfect matching, contradicting to the equimatchability of $G$ by Lemma  \ref{lem: defn equim}. Consider a matching $M_i$ isolating $a_i$, it is clear from Lemma \ref{lem: gen isolating matching} that for each subset $R\subset B$ such that $R$ can be matched into $k-1$ vertices of $S$, the graph $B-R$ is a randomly matchable graph. We then deduce that $B$ is isomorphic to $K_{p,p+2}$ for $p\geq 1$ since  $B$ has no  perfect matching. It follows that each $s_i$ is adjacent to some vertices of the $(p+2)$-stable set of $B$.  Consider now a matching $M$ isolating $s_k$ such that $s_1a_1,s_2a_2\in M $, where we recall hat $S-s_k$ can be matched into $A$. Notice that $s_k$ cannot be adjacent to $p+1$ vertices of the $(p+2)$-stable set of $B$ since otherwise there is no matching containing $s_ka_1$ and  saturating $N(\{s_k,a_1\})$, a contradiction to the criticality of the edge $s_ka_1$ by Corollary \ref{cor:no-dom-edge}. So  $s_k$ is adjacent to at most $p$ vertices of the $(p+2)$-stable set of $B$. We then observe that $G-V(M)$ is not a randomly matchable graph, a contradiction by Lemma \ref{lem: gen isolating matching}. It follows that if $S$ is not an independent set, then $G$ is not $\ECE$.

Now we suppose that $S$ is not independent, and let without loss of generality $s_1s_2 \in E(G)$. By Theorem \ref{thm:kotbic-k-conn-two-vertices}, $B$ is a complete graph. Similarly as before, if $A$ is independent, then each $a_i$ is complete to $S$ since $d(v)\geq k$ for each $v\in V(G)$. It immediately follows that $G-(V(M\cup \{s_1s_2\})$ is a disconnected graph for a matching $M$ between $B$ and $S$ saturating $\{s_3,s_4,\ldots, s_k\}$, a contradiction. Thus $A$ is not independent, so $a_1a_2\in E(G)$.

Since $A$ and $B$ are even clique, each one has a perfect matching, say $M_1=\{a_1a_2\}$ and $M_2$, respectively. First, if a vertex $s_i\in S$ is complete to neither $A$ nor $B$, say $s=s_1$, then there exist $a \in A$, $b \in B$ such that $as_1,bs_1 \notin E(G)$, say $a=a_1$. Then  $a_1$ is complete to $S-s_1$. Since $G$ is ECE-graph, there exists a matching saturating $N(\{a_1,a_2\})$. It follows that there exists a matching $M$ between $B$ and $S$ and covering $\{s_3,s_4,\ldots,s_k\}$. Consider the independent set $\{ a,s_1,b \}$, and a perfect matching $M'$ in $B-V(M)$, then $ (M\cup M' \{a_2s_2\}$ is a perfect matching in $G - \{a_1,s_1,b\}$,  a contradiction with equimatchability of $G$ by Lemma \ref{lem: defn equim}.  Therefore, we assume that every vertex in $S$ is complete to $A$ or $B$. As we apply the same process in Lemma \ref{lem: FC ECE,  connectivity 3}, we obtain that $G$ is of Type VIII in Figure \ref{fig: FC- ESE graph (cc)}. \medskip
\end{proof}

It is clear that there is no $k$-connected factor-critical graph with at most $2k+1$ vertices for $k=3$ (see Figure \ref{fig:7-vertices}).  ~~\medskip

%{\color{red}
%
%
%\begin{lem}\label{lem:component-1-vertex}
%Let $G$ be a k-connected factor-critical graph with at least $2k+3$ vertices for $k\geq 3$ and  a k-cut $S$ such that $G - S$ has a component with a single vertex. Then $G$ is ECE-graph if and only if ????
%\end{lem}
%
%%\begin{lem}\label{lem:component-1-vertex}
%%Let $G$ be a k-connected factor-critical graph with at least $2k+3$ vertices for $k\geq 3$ and and $\delta(G)=k$. Then $G$ is ECE-graph if and only if ????
%%\end{lem}
%
%
%\begin{proof}
%????
%\end{proof}
%
%}

\subsection*{A2. 4-connected factor-critical ECE-graphs} ~~\medskip

We now deal with the edge-criticality of 4-connected equimatchable factor-critical graphs.

\begin{nlem} \cite{Kotbic} \label{lem:kotbic alpha=2}
Let $G$ be a $k$-connected equimatchable factor-critical graph with at least $2k+3$ vertices and a $k$-cut set $S$ such that $G - S$ has two components with at least 3 vertices, where $k \geq 4$. Then the independence number of $G$ is 2.
\end{nlem}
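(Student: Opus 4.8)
The plan is to argue by contradiction, assuming an independent set of size $3$ exists and then producing a perfect matching of the graph obtained by deleting it, which is exactly what Lemma \ref{lem: defn equim} forbids. First note that $G$, being factor-critical, has odd order, and since it admits a cut set it is not complete, so $\alpha(G)\ge 2$; hence it suffices to show $\alpha(G)\le 2$. The hypotheses of Theorem \ref{thm:kotbic conn 3} are satisfied, so $G-S$ consists of exactly two complete components $A$ and $B$. Consequently an independent set contains at most one vertex of $A$ and at most one of $B$, so any hypothetical independent triple $I$ must meet $S$ and is of one of the forms $\{a,b,s\}$, $\{a,s_1,s_2\}$, $\{b,s_1,s_2\}$, or $\{s_1,s_2,s_3\}$, with $a\in A$, $b\in B$, $s_i\in S$.

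Next I would record the structural consequences of $k$-connectivity that drive the matching construction. Since $|S|=k$ is a minimum cut, every vertex of $S$ has a neighbour in $A$ and a neighbour in $B$: otherwise, if some $s$ had no neighbour in $A$, deleting the other $k-1$ vertices of $S$ would cut $A$ off from $\{s\}\cup B$, a $(k-1)$-cut. Moreover $G$ has minimum degree at least $k\ge 4$, and, for each clique, either all of its vertices have a neighbour in $S$ or at least $k$ of them do (else the vertices with no $S$-neighbour would be separated by fewer than $k$ vertices). Finally $|A|+|B|=|G|-k\ge k+3$, so both cliques are reasonably large and, as $|G|$ is odd, their orders have parities tied to that of $k$. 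These are precisely the margins that fail for $k=3$.

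For the construction, after deleting $I$ I am left with the cliques $A'=A\setminus I$, $B'=B\setminus I$ and the set $S'=S\setminus I$ of size at least $k-1$. The scheme is to decide, for each vertex of $S'$, whether to match it into $A'$, into $B'$, or to another vertex of $S'$, so that the unmatched remainders of $A'$ and $B'$ are even and can be completed by clique-internal perfect matchings. A parity count makes this consistent, since $|A'|+|B'|+|S'|=|G|-3$ is even, so the numbers to be absorbed into $A'$ and into $B'$ can be prescribed with the correct parities. Existence of such an assignment is then a system-of-distinct-representatives question in the bipartite graph spanned by $S'$ and $A'\cup B'$ (supplemented by the edges inside $S$), which I would resolve with Hall's theorem (Theorem \ref{thm:hall}), the conclusion being a perfect matching of $G\setminus I$ that contradicts Lemma \ref{lem: defn equim}.

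The main obstacle is verifying the Hall condition in the worst case, namely when $S$ is (nearly) independent and almost all of $S'$ must be absorbed into $A'\cup B'$ while still leaving remainders of the correct parities. Here the quantitative hypotheses are decisive: minimum degree at least $k\ge 4$ forces a vertex of $S'$ with few neighbours in $A\cup B$ to have several neighbours inside $S$ (so it can be matched internally), while a vertex with few internal neighbours must have many neighbours in the cliques; and $|A|+|B|\ge k+3$ keeps both cliques large enough to receive the routed vertices without exhausting either side. I expect that a careful split according to which vertices of $S$ reach into $A$ versus $B$, using the fact that $N(S)\cap A$ either covers $A$ or has at least $k$ vertices (and symmetrically for $B$), verifies Hall's condition in every case. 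Since deleting the at most two vertices of $I$ lying in $A\cup B$ removes at most two neighbours from any $s'\in S'$, the slack $k\ge 4$ is exactly what makes the argument go through, consistent with the appearance of independent triples in Types VI--VIII at $k=3$.
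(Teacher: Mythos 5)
First, a point of comparison: the paper does not prove this lemma at all --- it is imported verbatim from the cited manuscript of Eiben and Kotrbcik, whose arguments the authors themselves describe as long technical case analyses built on minimal isolating matchings and Lemma \ref{lem: gen isolating matching}. So there is no in-paper proof to measure you against; the question is only whether your argument stands on its own. Your setup is sound: the reduction to $\alpha(G)\le 2$, the invocation of Theorem \ref{thm:kotbic conn 3} to make $A$ and $B$ complete, the classification of a hypothetical independent triple $I$ into the four forms meeting $S$, and the structural consequences of $k$-connectivity (every $s\in S$ has a neighbour in each component; $|N(S)\cap A|\ge k$ unless $N(S)\supseteq A$; $|A|+|B|\ge k+3$) are all correct and are the right raw material.

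The genuine gap is that the entire technical content of the lemma is concentrated in the step you defer with ``I expect that a careful split \dots verifies Hall's condition in every case.'' What you must actually produce is, for an \emph{arbitrary} independent triple $I$, a perfect matching of $G\setminus I$, and this is not a single application of Theorem \ref{thm:hall}: you first have to \emph{choose} how many vertices of $S'=S\setminus I$ are routed into $A'$, into $B'$, and matched inside $S'$, subject to the parity constraints on the leftover clique sizes, and only then check a Hall-type condition for that particular routing. The two are entangled --- a routing that satisfies the parities may be exactly the one whose Hall condition fails (e.g.\ when $|A|=3$, the triple contains the vertex $a\in A$, and several vertices of $S'$ reach $A$ only through $a$ or through a single vertex of $A'$), and one must then show that a \emph{different} admissible parity split succeeds, using $k$-connectivity applied to carefully chosen sets such as $A\setminus\{a\}$ or an isolated block of $S'$. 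Equivalently, in Tutte's terms, you must rule out every set $U$ with $o(G\setminus I\setminus U)>|U|$, and since $A'$ and $B'$ are cliques the only obstructions come from subsets of $S'$ whose entire outside neighbourhood lies in $U\cup I$; bounding these requires exactly the case distinctions (on which vertices of $S$ reach into which clique, on whether $S$ contains edges, on the sizes and parities of $A$ and $B$) that your proposal names but does not carry out. Until that analysis is done --- and it is several pages of work in the source, not a routine verification --- the proof is a plan rather than a proof.
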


\begin{nthm}\cite{Kotbic}\label{kotbic-k-conn-single-vertex}
Let G be a $k$-connected equimatchable factor-critical graph with a $k$-cut $S$ such that $G-S$ has a component with a single vertex and a component $C$ with at least $k$ vertices, where $k \geq 2$. Then $G-S$ has exactly two components and there is a matching $M$ between $S$ and $C$ saturating all vertices of $S$. 
\end{nthm}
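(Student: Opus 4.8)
The plan is to prove the two conclusions separately, since the second one does not depend on the first. Throughout, write $|S| = k$ and let $v$ denote the vertex forming the single-vertex component of $G - S$. The first fact I would record is that $N(v) \subseteq S$, because $\{v\}$ is a component of $G - S$; since $k$-connectivity forces $\delta(G) \geq k$, this gives $N(v) = S$, i.e. $v$ is complete to $S$. This is the link between $v$ and the cut that drives the separation argument below.

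For the claim that $G - S$ has exactly two components, I would argue by contradiction and simply count vertices. We already have the two distinct components $\{v\}$ and $C$ with $|C| \geq k$. If a third component existed it would contribute at least one further vertex, so
\[
|V(G)| \;\geq\; |S| + 1 + |C| + 1 \;\geq\; k + 1 + k + 1 \;=\; 2k + 2 .
\]
Since $G$ is factor-critical its order is odd, forcing $|V(G)| \geq 2k+3$. But then Lemma \ref{lem:kotbic two-comp} applies and guarantees that $G - S$ has precisely two components, contradicting the presence of a third. Hence $G - S$ consists of exactly the two components $\{v\}$ and $C$.

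The heart of the statement is the existence of a matching between $S$ and $C$ saturating $S$, and here I would invoke Hall's Theorem (Theorem \ref{thm:hall}) for the bipartite graph on parts $S$ and $C$ whose edges are the $S$--$C$ edges of $G$. Suppose no such matching exists; then Hall's condition fails, so there is a set $T \subseteq S$ with $|N(T) \cap C| < |T|$. Put $T' = N(T) \cap C$ and $X = (S \setminus T) \cup T'$, so that $|X| = (k - |T|) + |T'| < k$. I would then verify that $X$ is a vertex cut separating $C \setminus T'$ from $v$: by definition of $T'$, no vertex of $C \setminus T'$ has a neighbour in $T$, so in $G - X$ the vertices of $C \setminus T'$ have neighbours only inside $C \setminus T'$, whereas $v \notin X$ survives with $N(v) = S \supseteq T$. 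Both sides are non-empty, since $|C \setminus T'| \geq |C| - |T'| \geq k - (|T|-1) \geq 1$ and $v$ lies outside $C \cup X$. Thus $X$ is a cut of size strictly less than $k$, contradicting $k$-connectivity; therefore Hall's condition holds and the required matching exists.

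The main obstacle is the Hall argument in the last paragraph: the delicate point is checking that $X = (S \setminus T) \cup T'$ is a genuine separator with both sides non-empty, which is exactly where the hypothesis $|C| \geq k$ is used (to force $C \setminus T' \neq \emptyset$). The first conclusion, by contrast, is a short parity-and-counting reduction to the already-cited two-component Lemma \ref{lem:kotbic two-comp}; there the only care needed is that assuming a third component legitimately pushes the order up to at least $2k+3$. I note that a self-contained proof of the first part is also possible without the counting shortcut, by taking a minimal matching $M_v$ isolating $v$ (which must saturate all of $S$ since $N(v)=S$) and applying Lemma \ref{lem: gen isolating matching} to see that $G \setminus (V(M_v)\cup\{v\})$ is connected and hence confined to a single component of $G-S$; but the counting argument is cleaner and I would present it as the primary route.
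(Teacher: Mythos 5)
Your proposal is correct, but there is nothing in the paper to compare it against: Theorem \ref{kotbic-k-conn-single-vertex} is imported from Eiben and Kotrbcik \cite{Kotbic} and the paper gives no proof of it. Taken on its own merits, your argument is sound. The observation $N(v)=S$ (from $\delta(G)\ge k$) is right; the counting step correctly forces $|V(G)|\ge 2k+2$ under the assumption of a third component, parity lifts this to $2k+3$, and Lemma \ref{lem:kotbic two-comp} then yields the contradiction. The Hall/K\"onig argument for the second conclusion also checks out: with $T'=N(T)\cap C$ and $X=(S\setminus T)\cup T'$ one has $|X|<k$, the set $C\setminus T'$ is closed under adjacency in $G-X$ (a vertex of $C\setminus T'$ has all its neighbours in $C\cup(S\setminus T)\cup T'$), it is non-empty precisely because $|C|\ge k$, and $v$ survives outside it, so $X$ is a cutset of size less than $k$. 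Notably, this half uses only $k$-connectedness and the component structure, not equimatchability or factor-criticality, which makes it slightly more general than the stated hypotheses require. One caveat: the parenthetical ``self-contained'' alternative for the first conclusion is not complete as sketched --- knowing that $G\setminus(V(M_v)\cup\{v\})$ is connected only places it inside one component of $G-S$; it does not rule out further components of $G-S$ that are entirely absorbed into $V(M_v)$. Since you explicitly designate the counting argument as the primary route, this does not damage the proof, but the aside should either be repaired or dropped.
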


\begin{nprop}\cite{Kotbic}\label{prop:maximal -matching leaves at most 2 uncovered}
Let $G$ be a graph with independence number 2. If $G$ is odd, then $G$ is equimatchable. If $G$
is even, then either $G$ is randomly matchable, or $G$ is not equimatchable, has a perfect matching, and every maximal matching of $G$ leaves unsaturated at most two vertices.
\end{nprop}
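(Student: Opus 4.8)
The plan is to build everything on the single structural consequence of independence number $2$: for any graph $G$ with $\alpha(G)=2$, the set of vertices left exposed by a maximal matching is independent, and therefore has size at most $2$. I would record this first, since it already supplies the final clause of the even case ("every maximal matching leaves unsaturated at most two vertices") for free, and it drives both cases through a parity count. Writing $n=|V(G)|$ and letting $e(M)$ denote the number of exposed vertices of a maximal matching $M$, we have $2|M|+e(M)=n$, so $e(M)\equiv n\pmod 2$ and $e(M)\in\{0,1,2\}$.

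For the odd case, $n$ odd forces $e(M)$ odd, which together with $e(M)\le 2$ gives $e(M)=1$ for \emph{every} maximal matching. Hence all maximal matchings have the common size $(n-1)/2$, so $G$ is equimatchable; this direction is immediate. For the even case, $n$ even forces $e(M)\in\{0,2\}$ for every maximal matching, and I would split on whether every maximal matching is perfect. If it is, then by the definition of randomly matchable (every matching extends to a perfect matching, equivalently every maximal matching is already perfect) $G$ is randomly matchable, giving the first alternative. Otherwise there is a maximal matching $M_0$ with $e(M_0)=2$, i.e.\ $|M_0|=n/2-1$; granting that $G$ has a perfect matching $M_1$ of size $n/2$, the two maximal matchings $M_0,M_1$ have different sizes, so $G$ is not equimatchable, it has a perfect matching, and the size bound on exposed vertices is the observation above, which is exactly the second alternative.

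The main obstacle is thus the existence of a perfect matching in the even case, which I would obtain from the classical theorem of Tutte. The key sub-observation is that for every $S\subseteq V(G)$ the graph $G-S$ has at most two components: vertices lying in distinct components of $G-S$ are non-adjacent in $G$ (an edge between them would avoid $S$ and hence survive in $G-S$), so three components would yield an independent set of size $3$, contradicting $\alpha(G)=2$. I then check Tutte's condition $o(G-S)\le|S|$ in three regimes. For $|S|\ge 2$ it holds because $o(G-S)$ is at most the number of components, which is at most $2\le|S|$. For $|S|=1$ the graph $G-S$ has odd order $n-1$ and at most two components, so a parity count leaves exactly one odd component, giving $o(G-S)=1\le 1$. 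For $|S|=0$ connectivity of $G$ together with $n$ even yields a single even component, so $o(G)=0\le 0$; this is the one place where connectivity is used, and it is the natural hypothesis, since the proposition is applied only to connected graphs (without it $2K_3$ would be an exception). Tutte's condition therefore holds, $G$ admits a perfect matching, and the even case is complete.
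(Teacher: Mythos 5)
Your argument is correct, and it is worth noting at the outset that the paper does not prove this proposition at all: it is quoted verbatim from the unpublished manuscript of Eiben and Kotrb\v{c}\'{\i}k \cite{Kotbic}, so there is no in-paper proof to compare against. Your route is the natural elementary one. The opening observation that the exposed vertices of a maximal matching form an independent set, hence number at most $\alpha(G)=2$, combined with the parity identity $2|M|+e(M)=n$, settles the odd case and the final clause of the even case immediately; the only substantive work is producing a perfect matching in the even, non-randomly-matchable case, and your verification of Tutte's condition (using that $G-S$ has at most two components for every $S$, since three components would yield an independent triple) is clean and complete in all three regimes $|S|\ge 2$, $|S|=1$, $|S|=0$. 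The one point deserving emphasis is the one you already flag: the statement as transcribed omits connectedness, and without it the $|S|=0$ case of Tutte's condition genuinely fails --- $2K_3$ has independence number $2$, even order, no perfect matching, and is equimatchable but not randomly matchable, so it contradicts the literal statement. Since every invocation of the proposition in the paper is for $k$-connected graphs with $k\ge 4$, your insertion of the connectivity hypothesis is the correct reading rather than a defect of your proof; it would be worth recording explicitly that the proposition, as used, presupposes $G$ connected.
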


\begin{nthm}\cite{Kotbic}\label{thm:kotbic-EFC-alpha-2}
Let $G$ be a $k$-connected odd graph with at least $2k + 3$ vertices and a $k$-cut set $S$ such that $G-S$ has two components with at least 3 vertices, where $k \geq 4$. Then $G$ has independence number at most 2 if and only if it is equimatchable and factor-critical.
\end{nthm}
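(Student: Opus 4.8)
The plan is to prove the two implications separately, noting that each reduces to a result already recorded in this section: the forward implication ($\alpha(G)\le 2 \Rightarrow$ equimatchable and factor-critical) is elementary and uses none of the connectivity hypotheses, whereas the reverse implication is precisely the (considerably deeper) content of Lemma \ref{lem:kotbic alpha=2}.

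For the forward direction I would assume $\alpha(G)\le 2$ and first establish factor-criticality. Fix any vertex $v$; then $G-v$ is an \emph{even} graph with $\alpha(G-v)\le\alpha(G)\le 2$. If $\alpha(G-v)=1$ then $G-v$ is an even complete graph, which has a perfect matching (indeed it is randomly matchable by Lemma \ref{lem:randomlymatchable}); if $\alpha(G-v)=2$ then Proposition \ref{prop:maximal -matching leaves at most 2 uncovered} applies, and in both of its alternatives $G-v$ is asserted to have a perfect matching. Hence $G-v$ has a perfect matching in every case, and since $v$ was arbitrary, $G$ is factor-critical. Equimatchability is then immediate from Lemma \ref{lem: defn equim}: because $\alpha(G)\le 2$ there is no independent set of three vertices at all, so a fortiori none whose removal leaves a perfectly matchable graph, and the lemma applies now that $G$ is known to be factor-critical.

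For the reverse direction I would assume $G$ is equimatchable and factor-critical and simply invoke Lemma \ref{lem:kotbic alpha=2}. All of its hypotheses are in force: $G$ is $k$-connected with $k\ge 4$, has at least $2k+3$ vertices, and admits a $k$-cut $S$ for which $G-S$ splits into two components each having at least three vertices. The lemma therefore yields $\alpha(G)=2$, hence in particular $\alpha(G)\le 2$, as required.

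The substantive difficulty is entirely absorbed into the cited Lemma \ref{lem:kotbic alpha=2}, where the structural analysis of $k$-connected EFC-graphs is carried out; once that is available, the theorem is merely a repackaging of it together with Proposition \ref{prop:maximal -matching leaves at most 2 uncovered}. The one point that calls for a little care is the forward direction: one must read Proposition \ref{prop:maximal -matching leaves at most 2 uncovered} correctly, namely that it is the \emph{guaranteed existence of a perfect matching} in each even induced subgraph $G-v$ (and not the equimatchability of $G-v$) that delivers factor-criticality, after which equimatchability of $G$ follows from the sheer absence of independent triples via Lemma \ref{lem: defn equim}.
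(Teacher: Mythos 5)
The paper does not give its own proof of this statement: it is quoted from Eiben and Kotrbcik \cite{Kotbic}, as are Lemma \ref{lem:kotbic alpha=2} and Proposition \ref{prop:maximal -matching leaves at most 2 uncovered}, so there is no in-paper argument to compare yours against. Taken on its own terms, your derivation is sound and is the natural way to assemble the theorem from the other quoted results: the reverse implication is exactly Lemma \ref{lem:kotbic alpha=2}, whose hypotheses coincide with those of the theorem once equimatchability and factor-criticality are assumed, and the forward implication follows by applying Proposition \ref{prop:maximal -matching leaves at most 2 uncovered} to each $G-v$ to get factor-criticality and then Lemma \ref{lem: defn equim} (vacuously, since there is no independent triple) to get equimatchability.

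One correction: your remark that the forward direction ``uses none of the connectivity hypotheses'' is not accurate. Proposition \ref{prop:maximal -matching leaves at most 2 uncovered}, read literally, fails for disconnected graphs --- $K_3\sqcup K_3$ is even, has independence number $2$, is equimatchable, and has no perfect matching --- so it must be understood as a statement about connected graphs. Correspondingly, ``$\alpha(G)\le 2$ and $|V(G)|$ odd'' alone does not imply factor-criticality: the join of a single vertex with $K_3\sqcup K_3$ has independence number $2$ and odd order, yet deleting the apex leaves a graph with no perfect matching. Your argument therefore genuinely needs $G-v$ to be connected for every $v$, i.e.\ $G$ to be at least $2$-connected. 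This is amply supplied by the hypothesis $k\ge 4$, so the proof goes through, but the connectivity assumption is doing real work in the forward direction as well.
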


%{\color{red}
%
%
%\begin{lem}\label{lem:atmost-2k+1}
%Let $G$ be a k-connected factor-critical graph with at most $2k+1$ vertices for $k\geq 4$. Then $G$ is ECE-graph if and only if ????
%\end{lem}
%
%\begin{proof}
%????
%\end{proof}
%
%}

A set $S$ of vertices is said to \emph{dominate} another set $T$ if every vertex in $T$ is adjacent to at least one vertex in $S$. An edge is called \emph{dominating edge} if the endpoints of the edge is a dominating set in $G$.

Note that a $k$-connected graph does not have a vertex of degree at most $k-1$, i.e., if a graph $G$ is $k$-connected then $\delta(G)\geq k$.

\begin{nlem}\label{lem:conn 4- three equiv}
Let $G$ be a $k$-connected odd graph with at least $2k+3$ vertices for $k\geq 4$ and $\delta(G)>k$. Then the following are equivalent.
\begin{itemize}
\item[$(i)$] $G$ is $\ECE$-graph.
\item[$(ii)$] $\alpha(G)=2$ and  $G$ has no dominating edge.
\item[$(iii)$] $\overline{G}$ is triangle-free and $diam(\overline{G})=2$.
\item[$(iv)$] $\overline{G}$ is a maximal triangle-free graph.
\end{itemize}
\end{nlem}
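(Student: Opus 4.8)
The plan is to separate the four conditions into a purely complement-theoretic block $(ii)\Leftrightarrow(iii)\Leftrightarrow(iv)$ and the matching-theoretic equivalence $(i)\Leftrightarrow(ii)$, and prove the two blocks independently. For $(ii)\Leftrightarrow(iii)\Leftrightarrow(iv)$ I would just rewrite everything in $\overline{G}$. Independent sets of $G$ are cliques of $\overline{G}$, so $\alpha(G)=2$ says exactly that $\overline{G}$ is triangle-free but has an edge; and an edge $uv$ of $G$ is dominating iff $N(\{u,v\})=V(G)$, i.e.\ iff $u$ and $v$ have \emph{no} common neighbour in $\overline{G}$. Hence ``no dominating edge'' means every non-adjacent pair of $\overline{G}$ has a common neighbour, i.e.\ lies at distance $2$. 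Using $|V(G)|\ge 2k+3\ge 11$ (so $\overline{G}$ is neither complete nor edgeless), ``$\overline{G}$ triangle-free and every non-adjacent pair at distance $2$'' is the same as ``$\overline{G}$ triangle-free and $\mathrm{diam}(\overline{G})=2$'', which is the standard description of a maximal triangle-free graph (adding any non-edge creates a triangle). These are short rewritings and should present no difficulty.

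Next I would do the easy matching direction $(ii)\Rightarrow(i)$. From $\alpha(G)=2$ and $|V(G)|$ odd, Proposition~\ref{prop:maximal -matching leaves at most 2 uncovered} gives that $G$ is equimatchable, so it remains to certify criticality of every edge through Lemma~\ref{lem: critical edge iff there is a matching}. Fixing an edge $uv$, the absence of a dominating edge provides $w\notin N[\{u,v\}]$; then $G-\{u,v,w\}$ has even order and independence number at most $2$, so the even case of Proposition~\ref{prop:maximal -matching leaves at most 2 uncovered} supplies a perfect matching $P$. Now $P\cup\{uv\}$ contains $uv$ and saturates $V(G)\setminus\{w\}\supseteq N(\{u,v\})$, so $uv$ is critical; as $uv$ was arbitrary, $G$ is $\ECE$. (This direction uses neither $k\ge 4$, $\delta(G)>k$, nor the order bound.)

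The substantive direction is $(i)\Rightarrow(ii)$. That $G$ has no dominating edge is immediate: such an edge $uv$ satisfies $N(\{u,v\})=V(G)$, and no matching of an odd graph saturates all vertices, so by Lemma~\ref{lem: critical edge iff there is a matching} it is not critical, contradicting $\ECE$; this also shows $G$ is not complete, whence $\alpha(G)\ge 2$. For the equality $\alpha(G)=2$ I would route through Theorem~\ref{thm:kotbic-EFC-alpha-2}: it suffices to verify that $G$ is an $\EFC$-graph and that the cut hypotheses of that theorem hold. Factor-criticality is obtained from Theorem~\ref{thm:ece-categories} once the bipartite and even-clique alternatives are excluded (oddness kills $K_{2t}$). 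For the cut hypotheses, fix a $k$-cut $S$: here $\delta(G)>k$ forbids a one-vertex component, and a two-vertex component would force $G$ into Type VIII by Lemma~\ref{lem:component-2-vertices}, which has a $3$-cut and hence contradicts $k\ge 4$; together with Lemma~\ref{lem:kotbic two-comp} this shows $G-S$ has exactly two components, each with at least $3$ vertices. Theorem~\ref{thm:kotbic-EFC-alpha-2} (equivalently Lemma~\ref{lem:kotbic alpha=2}) then yields $\alpha(G)\le 2$, and with $\alpha(G)\ge 2$ we conclude $\alpha(G)=2$, completing $(i)\Rightarrow(ii)$.

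The one genuinely delicate step I anticipate is \emph{excluding the bipartite alternative} when establishing factor-criticality, since a bipartite $G$ would have $\alpha(G)\ge |V(G)|/2>2$ and so must be ruled out before Theorem~\ref{thm:kotbic-EFC-alpha-2} can be invoked. The approach I would take is to use the tight-neighbourhood characterization of bipartite $\ECE$-graphs (Theorem~\ref{prop: charc of ECE bip graph}): for $G=(U\cup W')$ with $|U|<|W'|$ every $u\in U$ satisfies $|N(N(u))|=|N(u)|$, so $N(N(u))$ is a vertex cut of size $d(u)\ge\delta(G)>k$, separating $N(u)$ from the non-empty set $W'\setminus N(u)$. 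The intended conclusion is that no cut of size $k<\delta(G)$ can exist, contradicting connectivity exactly $k$ (which is the reading of ``$k$-connected'' forced by the paper's convention of always pairing it with a $k$-cut). The gap to be filled is upgrading ``these particular cuts are large'' to ``every minimum cut is at least $\delta(G)$'' (i.e.\ that bipartite $\ECE$-graphs are maximally connected in this regime); I would isolate this as a small lemma. I note that under the reading of the surrounding section (factor-critical $\ECE$-graphs), $G$ is non-bipartite by hypothesis and this obstacle dissolves entirely.
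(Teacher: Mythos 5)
Your proposal is correct in substance, and for $(i)\Rightarrow(ii)$ and the complement equivalences it follows the paper's own route: fix a $k$-cut $S$, use Lemma~\ref{lem:kotbic two-comp} together with $\delta(G)>k$ (and the Type~VIII obstruction from Lemma~\ref{lem:component-2-vertices}) to reduce to two components of size at least $3$, then invoke Lemma~\ref{lem:kotbic alpha=2}, while the absence of a dominating edge comes from Lemma~\ref{lem: critical edge iff there is a matching}. Where you genuinely diverge is $(ii)\Rightarrow(i)$. The paper first passes through Theorem~\ref{thm:kotbic-EFC-alpha-2} to get factor-criticality and then certifies criticality of each edge $xy$ by a case analysis on the position of $xy$ relative to $S$ (inside $A$ or $B$, inside $G[S]$, or between $S$ and a component), assembling the required matching from Proposition~\ref{prop:maximal -matching leaves at most 2 uncovered} applied to $G[S]$ and extending it across the cut. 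Your version applies that proposition once to $G-\{x,y,w\}$ for a non-neighbour $w$ of $\{x,y\}$ and is shorter and cleaner; what the paper's longer route buys is that it never needs the proposition on an induced subgraph whose connectivity is unclear.

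Two caveats. First, your parenthetical claim that $(ii)\Rightarrow(i)$ uses neither $k\ge 4$ nor the other hypotheses is not right: Proposition~\ref{prop:maximal -matching leaves at most 2 uncovered} fails for disconnected graphs (two disjoint odd cliques have independence number $2$, even order, are equimatchable and have no perfect matching), so you need $G-\{x,y,w\}$ to be connected, and that is precisely what $k$-connectivity with $k\ge 4$ guarantees; the argument is fine as written only because that hypothesis is available. Second, the bipartite obstacle you flag in $(i)\Rightarrow(ii)$ is a genuine issue with the lemma as literally stated, but it is a gap shared with the paper: the paper's proof silently assumes $G$ is factor-critical the moment it invokes Lemma~\ref{lem:kotbic two-comp}, whose hypothesis is ``EFC-graph.'' Your contextual reading --- that the lemma is intended for factor-critical $\ECE$-graphs, as the section title and the Conclusion indicate --- is the correct repair, and is preferable to trying to exclude bipartite $\ECE$-graphs via Theorem~\ref{prop: charc of ECE bip graph}, which, as you yourself note, only shows that certain specific separators are large and does not by itself bound the connectivity from below.
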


\begin{proof}
The equivalence of $(ii)$, $(iii)$ and $(iv)$ can be easily observed. So, we only need to show the equivalence of $(i)$ and $(ii)$.

First assume that $G$ is an $\ECE$-graph. Let $S$ be a $k$-cut set. By Lemma \ref{lem:kotbic two-comp}, $G-S$ has precisely two components $A$ and $B$. Also, both have at least two vertices since $\delta(G)> k$. If one of $A,B$ consists of exactly two vertices then we are done by Lemmas \ref{lem:component-2-vertices}. 
Thus assume that both $A$ and $B$ consist of at least $3$ vertices. It then follows from Lemma \ref{lem:kotbic alpha=2} that $\alpha(G)=2$. In addition,  $G$ has no dominating edge by Lemma \ref{lem: critical edge iff there is a matching} since $G$ is an $\ECE$-graph.

Suppose now that  $\alpha(G)=2$ and  $G$ has no dominating edge. Let $S$ be a $k$-cut set. Since $\alpha(G)=2$, the graph $G-S$ has precisely two components  $A$ and $B$, also both are complete graph.
Notice that $|A|\geq 2$ and $|B|\geq 2$ since $\delta(G)>k$. In addition, every vertex in  $S$ is complete to $A$ or $B$ because of $\alpha(G)=2$.
If one of $A,B$ consists of two vertices then we are done by Lemmas \ref{lem:component-2-vertices}. 
So we assume that $|A|\geq 3$ and $|B|\geq 3$. It then follows from Theorem \ref{thm:kotbic-EFC-alpha-2} that $G$ is factor-critical equimatchable graph. We shall show that $G$ is edge-critical graph, i.e., every edge is critical. Since there is no dominating edge, for every edge $xy$, there exists $z\in V(G)\setminus \{x,y\}$ such that $z$ is adjacent to neither $x$ nor $y$. It then suffices to show that $G-z$ has a perfect matching containing $xy$ for some $z\in V(G-N(\{x,y\}))$. We first note that every maximal matching leaves at most 2 unsaturated vertices in both $G$ and $G[S]$ by Proposition \ref{prop:maximal -matching leaves at most 2 uncovered} since $\alpha(G)=2$.

If $xy$ belongs to $A$ or $B$, then we are done by taking $z=a$ for any $a\in A$. Similarly if  $xy$ belongs to $G[S]$, then we have such a matching.  Finally, suppose that $xy$ links $S$ and $A$ (or $B$). Let $x\in A$ and $y\in S$. If $y$ is complete to $B$, then $z \in S$. In such a case, we have a perfect matching in $G-z$. Indeed, consider a maximal matching $M$ isolating $y,z$ in $G[S]$, the matching $M$ leaves at most 2 unsaturated vertices by Proposition \ref{prop:maximal -matching leaves at most 2 uncovered}, say $u,v$. Since $S$ is a $k$-cut set, the set $\{u,v\}$ has at least two neighbours in both $A$ and $B$. Thus, we can extend $M\cup \{xy\}$ to a perfect matching in $G-z$ as we claimed. 
On the other hand, if $y$ is not complete to $B$, i.e., $y$ is not adjacent to $z=a\in A$, then we have a perfect matching containing $xy$ in $G-z$ by Proposition \ref{prop:maximal -matching leaves at most 2 uncovered}.
\end{proof}

%\textbf{Summary}\medskip 
%
%\textbf{Factor-critical ECE graphs:} \medskip
%
%\begin{itemize}
%\item Connectivity $2$. (Done, Theorem \ref{thm:chrc of ECE conn 2})
%
%\item Connectivity $k \geq 3$
%\begin{itemize}
%\item[(i)] \ $|G|\leq 2k+1$ 
%\begin{itemize}
%\item[-] $k=3$. (No such graph, see Figure \ref{fig:7-vertices})
%{\color{red}\item[-] $k\geq 4 $. (Open case)}
%\end{itemize}
%\item[(ii)] \ $|G|> 2k+1$  
%\begin{itemize}
%{\color{red}\item[-] $\delta(G)=k$. (Open case)}
%\item[-] $\delta(G)>k$. (Done,  Lemma \ref{lem:component-2-vertices},  Lemma \ref{lem: FC ECE,  connectivity 3}, Theorem \ref{lem:conn 4- three equiv})
%\end{itemize}
%\end{itemize}
%\end{itemize}

\end{document}